\journal{Advances in Mathematics}
\numberwithin{table}{section}
\newtheorem{lemma}{Lemma}[section]
\newtheorem{theorem}[lemma]{Theorem}
\newtheorem{corollary}[lemma]{Corollary}
\newtheorem{proposition}[lemma]{Proposition}
\newtheorem{remark}[lemma]{Remark}
\newcommand{\Z}{\mathbb{Z}}
\newcommand{\R}{\mathbb{R}}
\newcommand{\C}{\mathbb{C}}
\newcommand{\N}{\mathbb{N}}
\newcommand{\del}{\partial}
\newcommand{\Vol}{\operatorname{Vol}}
\renewcommand{\d}{\operatorname{d}\!}
\renewcommand{\O}{\mathcal{O}}
\newcommand{\Tr}{\operatorname{Tr}}
\newcommand{\myciteauthor}[1]{\IfStrEqCase{#1}{
	{HunsickerMazzeo2005}{Hunsicker and Mazzeo}
	{Taubes2013}{Taubes}
	{Taubes2015}{Taubes}
	{Haydys2015}{Haydys and Walpuski}
	{Taubes2016}{Taubes}
	{Taubes2017}{Taubes}
	{Taubes2020}{Taubes}
	{Taubes2022}{Taubes}
	{Doan2021}{Doan and Walpuski}
	{Donaldson2021}{Donaldson}
	{parker2023}{Parker}
	{Parker2024}{Parker}
	{Takahashi2023}{Takahashi}
	{Haydys2022}{Haydys}
	{Taubes2020Examples}{Taubes and Wu}
	{Haydys2023}{Haydys et al.}
	{He2022}{He}
	{GilbargTrudinger2001}{Gilbarg and Trudinger}
	{Hamilton1982}{Hamilton}
	{Pacard2008}{Pacard}
	{Aronszajn1957}{Aronszajn}
	{HeParker2024}{He and Parker}
	{Hebey1999}{Hebey}
	{yan2025}{Yan}
	{NotInUse}{NotInUse}}[Not Found]}
\newcommand{\mycite}[1]{\myciteauthor{#1} \cite{#1}}
\tikzset{
	pics/torus/.style n args={3}{
		code = {
			\providecolor{pgffillcolor}{rgb}{1,1,1}
			\begin{scope}[
				yscale=cos(#3),
				outer torus/.style = {draw,line width/.expanded={\the\dimexpr2\pgflinewidth+#2*2},line join=round},
				inner torus/.style = {draw=pgffillcolor,line width={#2*2}}
				]
				\draw[outer torus] circle(#1);\draw[inner torus] circle(#1);
				\draw[outer torus] (180:#1) arc (180:360:#1);\draw[inner torus,line cap=round] (180:#1) arc (180:360:#1);
			\end{scope}
		}
	}
}
\begin{document}

\begin{frontmatter}

\title{A stabilization result for $\mathbb{Z}_2$-harmonic 1-forms by constructing solutions on closed 3-manifolds with long cylindrical necks}

\author{Willem Adriaan Salm} 

\affiliation{organization={Université Libre de Bruxelles},}

\begin{abstract}
In this paper, we give an explicit construction of families of $\Z_2$-harmonic 1-forms that degenerate to manifolds with cylindrical ends. We do this by considering certain linear combinations of $L^2$-bounded $\Z_2$-harmonic 1-forms and by modifying the metric near the link. This construction works if number of $L^2$-bounded $\Z_2$-harmonic 1-forms is strictly more than twice the number of connected components of the link. This can always be done if we consider a connected sum with a 3-manifold with sufficiently large $b_1$.
\end{abstract}


\begin{keyword}
\MSC 53C99
\end{keyword}

\end{frontmatter}

\section{Introduction}
$\Z_2$-harmonic 1-forms are a multivalued extension of the standard harmonic 1-forms. To define these, let $(M,g)$ be a three-dimensional, Riemannian manifold, $\Sigma$ a codimension 2 subspace and $\mathcal{I}$ a real Euclidean line bundle over $M \setminus \Sigma$. In this paper we assume that $\Sigma$ is a smoothly embedded 1-manifold, but we still refer $\Sigma$ as the singular set. The line bundle $\mathcal{I}$ can be equipped with a unique connection by requiring that $\nabla s = 0$ for any local section $s$ with constant pointwise norm. For any $\omega \in \Gamma(T^*M \otimes \mathcal{I})$, we say that the triple $(\Sigma, \mathcal{I}, \omega)$ is a $\Z_2$-harmonic 1-form if $\d_\nabla^* \omega = \d_\nabla \omega = 0$ and $\mathcal{I}$ has non-trivial monodromy for loops linking $\Sigma$. Similarly, one can define $\Z_2$-harmonic spinors.

There is an equivalent definition of $\Z_2$-harmonic 1-forms which we will use in this paper. Namely, the space of real line bundles is classified by $H^1(M \setminus \Sigma, \Z_2)$ and so one can find a double cover $\hat{M}$ of $M \setminus \Sigma$ that trivializes $\mathcal{I}$. On this double cover, $\omega$ becomes a harmonic 1-form that switches sign under the $\Z_2$ action of the double cover. Although the metric on this double cover is degenerate along $\Sigma$, \mycite{HunsickerMazzeo2005} showed that there is a Hodge theory for these spaces. They showed that for every element in $ H^1_-(\hat{M}) := \{ \omega \in H^1_{\mathrm{dR}}(\hat{M})\colon \Z_2 \cdot \omega = - \omega \}$ there exists a unique, harmonic, $L^2$-bounded representative and this representative is a $\Z_2$-harmonic 1-form.

For application purposes we require that $\Z_2$-harmonic 1-forms are $C^0$-bounded, unless specified otherwise. Namely, Taubes \cite{Taubes2013, Taubes2015} found that sequences of $PSL_2(\C)$-connections on 3-manifolds can degenerate, but after a blow up, one can find a subsequence converging to a $C^0$-bounded $\Z_2$-harmonic 1-form. According to \mycite{Haydys2015} and \mycite{Taubes2016}, this behaviour also shows up for Seiberg-Witten equations with multiple spinors in dimensions three and four. According to Taubes \cite{Taubes2017,Taubes2020,Taubes2022}, $C^0$-bounded $\Z_2$-harmonic 1-forms/spinors also play a role in the study of the Kapustin-Witten equations, Vafa-Witten equations and the study of complex anti-self dual equations on 4-manifolds.

Although $\Z_2$-harmonic 1-forms and spinors play an important role in many gauge theories, not much is known about them. \mycite{Doan2021} gave an existence theory for $\Z_2$-harmonic spinors using the wall-crossing formula for Seiberg-Witten equations with two spinors. By the work of \mycite{Donaldson2021}, \mycite{parker2023} and \mycite{Takahashi2023} we have deformation theories for $\Z_2$-harmonic functions/1-forms/spinors. We also know there should be topological conditions distinguishing $L^2$-bounded and $C^0$-bounded solutions. For example, \mycite{Haydys2022} gave a necessary condition for $\Z_2$-harmonic 1-forms on an integral homology sphere in terms of the Alexander polynomial of the singular set.

The reason that so little is known about these $\Z_2$-harmonic 1-forms, is that there are only a few known construction methods. For example, \mycite{Taubes2020Examples} gave some local examples where the singular set is not smooth. \mycite{Haydys2023} found some handcrafted examples in 3 and 4 dimensions. \mycite{He2022} explained how to create $\Z_2$-harmonic 1-forms if there is an additional $\Z_3$-symmetry.
\mycite{HeParker2024} also created new examples by considering the connected sum of multiple $\Z_2$-harmonic 1-forms. \mycite{yan2025} created new families of examples where the branching set collapses.

In this paper we give an explicit construction of $\Z_2$-harmonic 1-forms. We show that as long as $\Sigma$ is smooth and $\dim H^1_-(\hat M)$ is strictly more than $2\:b_0(\Sigma)$, there always exist a $\Z_2$-harmonic 1-form by deforming the metric near the singular set.

To understand why $\Z_2$-harmonic 1-forms are so hard to find, we have to revisit weighted elliptic analysis on complete, non-compact manifolds. By studying the fundamental solutions for the model metric one can guess the correct decay rate for which the elliptic operator is Fredholm. For example, for a 3-manifold with a cylindrical end, the fundamental solutions of the Laplacian are linear combinations of $e^{ \sqrt{n^2 + m^2} r} e^{in \phi} e^{i m \theta}$ and so one uses the weighted norm $\|u\|_{C^{k, \alpha}_\delta} := \|e^{- \delta r } u\|_{C^{k, \alpha}}$. One can show that except for a discrete set of values for $\delta \in \R$, the Laplacian is Fredholm.

One can repeat this argument to find $L^2$-bounded $\Z_2$-harmonic 1-forms, as the model solutions have polynomial behaviour near the singular set. Using the monodromy condition one can show that for certain weights the Laplacian is an isomorphism. This is done by \mycite{HunsickerMazzeo2005} to classify $L^2$-bounded $\Z_2$-harmonic 1-forms.

The issues start when one additionally requires that these 1-forms are $C^0$-bounded. Using the same weighted analysis from before, one can check that the formal adjoint of the Laplacian---which is again the Laplacian but with different weight---has an infinite dimensional kernel. Therefore, for a generic metric there are no $\Z_2$-harmonic 1-forms. Sadly, this can't be solved by changing the weight function. Namely, the model solutions are of the form $I_{n/2} (m\cdot r ) e^{in \phi} e^{im \theta}$ where $I_{\alpha}(z)$ is the modified Bessel function of the first kind. This implies that for a fixed value of $n \in \Z$, all model solutions have the same decay rate and so they are not distinguishable using weighted norms.

Normally, one describes this cokernel using a local expansion near the singular set $\Sigma$. Namely, for any point on $\Sigma$ we can trivialize a neighbourhood as $\R \times \C$ where $(\theta, 0) \in \R \times \C$ is a parametrization of $\Sigma$.
Using the work of \mycite{Donaldson2021}, any $L^2$-bounded $\Z_2$-harmonic 1-form has a local expansion
\begin{equation}
	\label{eq:introduction:expansion-donaldson}
	\omega = \Re \left(
	\d \left(A (\theta) z^{\frac{1}{2}} + B (\theta) z^{\frac{3}{2}} \right)
\right)
+ \O \left(r^{\frac{3}{2}}\right)
\end{equation}
near the singular set.
A $\Z_2$-harmonic 1-form is $C^0$-bounded if and only if $A(\theta)$ vanishes everywhere along $\Sigma$.

Because of these issues one often perturbs the singular set to have an extra set of parameters to control. If one sets up the problem correctly, one can show that small perturbations can kill off small values of $A$. This is done in \cite{Donaldson2021} and in \cite{parker2023}. These methods are not trivial and for the perturbation argument to work the authors had to use the Nash--Moser theorem and they also had to assume that the $B$-term in \cref{eq:introduction:expansion-donaldson} is nowhere vanishing.

With this in mind we make the following observations: First, in order to construct a $\Z_2$-harmonic 1-form it is sufficient to construct an $L^2$-bounded $\Z_2$-harmonic 1-form with an sufficiently small obstruction term and repeat the argument given in \cite{Donaldson2021}. (This is exactly the same idea as in \mycite{HeParker2024}, but they use the deformation theory of \mycite{parker2023}.) Secondly, to control the terms in the cokernel one has to make use of additional structures or symmetries. In the current paper we will control the cokernel by forcing cylindrical behaviour into this edge problem. To do this we replace the metric near $\Sigma$ with a model metric and this model metric will have a cylindrical region of arbitrary length. Finding $L^2$-bounded $\Z_2$-harmonic 1-forms reduces to solving the Laplace equation on these cylinders and thus we expect to get exponential growth on these regions. We show that the Laplacian has a bounded inverse, uniform with respect to the length of the necks, and so this exponential growth behaviour gives exponential decay behaviour near $\Sigma$. 


\begin{remark}
	A more general study to the exponential growth rate has been done by \mycite{parker2023} and has been used in \mycite{Parker2024}.

	
\end{remark}

In order to repeat the argument given by \mycite{Donaldson2021}, we need to study the behaviour of the estimates in his paper under the stretching of the necks.
By slightly changing the problem we can make these estimates uniform, but this will yield two new conditions for each connected component of $\Sigma$:
\begin{enumerate}
	\item There is a complex-valued Fourier mode which will play a dominant role when inverting the linearised operator. We need this mode to be non-zero and we need to normalise everything such that this mode has unit norm. In a suitable trivialization and using the expansion in \cref{eq:introduction:expansion-donaldson}, this Fourier mode can be interpreted as the \textit{average} of $B(\theta)$.
	\item There is a complex-valued Fourier mode which will ruin the uniformity of the estimates. We need this mode to be zero. Similarly, this Fourier mode can be interpreted as the \textit{average} of $A(\theta)$. 
\end{enumerate}
Notice that these are much weaker conditions than in \cite{Donaldson2021, parker2023}, as these averages are just numbers and not functions.

In the last part of this paper we show that for a generic metric the first condition is satisfied. However, the same proof will show that the second condition is generically not satisfied. We show that these conditions are linearly independent and thus we can satisfy the second condition by changing the cohomology class of the $\Z_2$-harmonic 1-form. By applying Gram–Schmidt we will finally prove:

\begin{theorem}
	\label{thm:main-theorem}
	Let $(M,g)$ be a closed Riemannian 3-manifold, $\Sigma$ be a smoothly embedded closed 1-dimensional manifold inside $M$ with $p$ connected components and let $\mathcal{I}$ be a real Euclidean line bundle on $M \setminus \Sigma$ with non-trivial monodromy around any loop linking $\Sigma$.
	Let $(\hat{M}, g)$ be the double cover of $M \setminus \Sigma$ that trivializes $\mathcal{I}$. Assume that the dimension of $H^1_-(\hat{M}) := \{ \sigma \in H^1_{\mathrm{dR}} (\hat{M}) \colon \Z_2 \cdot \sigma = - \sigma \}$ is at least $2p + 1$.
	Then for each $[\omega] \in H^1_{-}(\hat M)$ and any $2p$-dimensional linear subspace $E \subset H^1_-(\hat M)$, there is a 1-parameter family of metrics $\{g_s\}_{s \in (s_0, \infty)}$ and 1-parameter family of $\Z_2$-harmonic 1-forms $(\Sigma, \mathcal{I}, \omega_s)$ on $(M, g_s)$ such that on $\hat M$, $\omega_s$ is a representative of $[\omega] \in H^1_-(\hat{M}) / E$.
	Moreover, for each $s \in (s_0, \infty)$
	\begin{enumerate}
		\item there exists an open set $U$ in the interior of $\hat{M}$ such that $g_s = g$ on $U$, and
		\item there exists an open set $W$ such that $(W, g_s)$ is a flat cylinder of length $s$.
	\end{enumerate}
\end{theorem}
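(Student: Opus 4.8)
The plan is a neck-stretching construction: I would degenerate $(M,g)$ near $\Sigma$ to a family $(M,g_s)$ carrying a flat cylinder of length $s$, push the analytic difficulty onto that cylinder where it becomes an explicit separation-of-variables problem, and feed the resulting $L^2$-bounded $\Z_2$-harmonic 1-form --- whose obstruction is exponentially small in $s$ thanks to the stretching --- into Donaldson's perturbation scheme with estimates made uniform in $s$. Concretely, for each component $\Sigma_i$ I fix Fermi coordinates identifying a neighbourhood with $S^1_\theta\times D^2_z$, pass to the double cover (where $D^2\setminus\{0\}$ unwraps to $\{r>0\}\times(\R/4\pi\Z)_\phi$), and replace $g$ on $\{r<2\epsilon\}$ by a metric that interpolates between $g$ (near $r=2\epsilon$, through a fixed $s$-independent cutoff) and, for $r$ small, a rotationally symmetric model which after a conformal change is the flat product $dt^2+d\phi^2+d\theta^2$ on a middle band of $t$-length $s$ and is the standard conical edge model near $r=0$ (the model along which the $\Z_2$-harmonic data degenerates). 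Call the result $g_s$, with double cover $(\hat M_s,g_s)$: then $g_s=g$ on a fixed open set $U\Subset\hat M_s$, on an open set $W$ the metric is a flat cylinder of length $s$, and since only the metric changes, $H^1_-(\hat M_s)\cong H^1_-(\hat M)$.

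\emph{Uniform Hodge theory --- the crux.} On the cylinder a $\Z_2$-odd 1-form decomposes into Fourier modes $e^{i(n\phi/2+m\theta)}$ with $n$ odd, and the harmonic modes are spanned by $e^{\pm\mu t}$-profiles where the indicial exponents $\mu$ form a discrete set bounded below by $\tfrac12$, with $\mu=\tfrac12$ attained only by the borderline modes $(n,m)=(\pm1,0)$ and isolated from the rest. Combining the ($s$-independent) invertibility of $\Delta$ on the fixed compact piece $M\setminus\bigcup_i\{r<\epsilon\}$ with Hunsicker--Mazzeo's edge Hodge theory on the capped model, and patching across the cylinders with cutoffs, I would build for $s$ large a right inverse $P_s$ of $\Delta$ on $\hat M_s$ modulo the finite-dimensional span of the borderline cylinder modes, with $\|P_s\|$ bounded uniformly in $s$ on suitably weighted H\"older and $L^2$ spaces --- the borderline modes being tracked explicitly, their worst growth $e^{s/2}$ absorbed into the weight. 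This yields, for each $[\eta]\in H^1_-(\hat M)$, the $L^2$-harmonic representative $\omega_{[\eta],s}$ and its edge expansion $\omega_{[\eta],s}=\Re\bigl(\d(A_i(\theta)\,z^{1/2}+B_i(\theta)\,z^{3/2})\bigr)+\O(r^{3/2})$ along each $\Sigma_i$, in which the non-average part of $A_i$ is exponentially small in $s$, while the averages $\bar A_i,\bar B_i$ are, up to an $s$-independent isomorphism, fixed metric-dependent linear functionals $a_i,b_i$ of $[\eta]$. Producing this parametrix uniformly across arbitrarily long necks while correctly isolating the borderline $\pm\tfrac12$ indicial roots is where I expect the main difficulty to lie; a second genuine point is that Donaldson's tame estimates must be checked to survive the stretching.

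\emph{Normalisation and killing the obstruction average.} A transversality argument applied to $a_i,b_i$ shows that for a generic choice of metric in the neck region (kept equal to $g$ off $U$ and flat on $W$) each $b_i$ is nonzero --- this is condition (1) --- while the $\{a_i\}$ are nonzero but their $2p$ real components are linearly independent, so condition (2), $\bar A_i=0$, fails generically yet its solution set inside $H^1_-(\hat M)$ has codimension exactly $2p$; by $\dim H^1_-(\hat M)\geq 2p+1$ this leaves at least one cohomology direction free. Combining a small deformation of $g_s$ supported in the neck (away from $U$ and $W$) with a shift of the representative class inside $E$ --- the two organised by a Gram--Schmidt procedure with respect to an $L^2$ pairing on $H^1_-(\hat M)$ adapted to $E$ and to $\bigcap_i\ker a_i$ --- I would produce a representative $[\omega']\equiv[\omega]\pmod E$ and a metric $g_s$ in the allowed family for which $\bar A_i=0$ and $\bar B_i$ is normalised to unit norm. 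This bookkeeping, while not the deepest step, must be arranged so as not to spoil the $s$-uniformity.

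\emph{The nonlinear step.} With conditions (1)--(2) in force, $\omega_{[\omega'],s}$ is $\Z_2$-harmonic up to a residual obstruction $A_i(\theta)$ that is exponentially small in $s$. I would then run Donaldson's implicit-function/Nash--Moser scheme, perturbing $g_s$ within the class of metrics agreeing with $g$ on $U$ and flat on $W$, to kill $A_i$ exactly; conditions (1)--(2) are precisely what make the tame inverse of the linearisation bounded uniformly in $s$, so the scheme converges and the metric correction is exponentially small in $s$. Hence for $s$ larger than some $s_0$ the corrected metric still equals $g$ on $U$, is still a flat cylinder of length $s$ on $W$, the singular set is still $\Sigma$, and $\omega_s$ represents $[\omega']\equiv[\omega]$ in $H^1_-(\hat M)/E$, which is the assertion.
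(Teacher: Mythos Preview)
Your overall strategy matches the paper's: construct neck-stretched metrics $g_s$, prove a uniformly bounded inverse for $\Delta$, isolate the leading Fourier coefficients along each $\Sigma_i$ (your $\bar A_i,\bar B_i$ are the paper's $u_{10},u_{30}$), arrange $u_{10}=0$ and $u_{30}\neq 0$ by a genericity argument for the metric combined with linear algebra in $H^1_-(\hat M)/E$, and then invoke Donaldson's Nash--Moser scheme.

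The description of the nonlinear step, however, is off and as written would not run. You say you will ``perturb $g_s$ within the class of metrics agreeing with $g$ on $U$ and flat on $W$'' to kill $A_i$, keeping the singular set at $\Sigma$. But Donaldson's scheme does not vary the metric directly: its deformation parameter is a section $\nu$ of the normal bundle of $\Sigma$, and the crucial linearisation is $D_\nu A = B\cdot\nu$ up to $A$-quadratic error --- this is exactly why your condition~(1) on $B$ is the right hypothesis. Varying the metric in a generic direction produces a completely different linearisation (the integral formula the paper computes in Section~4.2 following He), for which there is no reason to expect a tame inverse. What the paper actually does is perturb $\Sigma$ to a nearby link $\tilde\Sigma$ via Donaldson, and only \emph{then} pull back by the diffeomorphism $\psi$ carrying $\tilde\Sigma$ to $\Sigma$; since $\psi$ is supported in the boundary region, $\psi^*g_s$ still equals $g$ on the interior and is still a flat cylinder on the neck. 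That is the mechanism by which the final singular set is $\Sigma$ with a slightly corrected metric --- not a direct metric perturbation.

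Two smaller differences. For the uniform inverse the paper does not patch a parametrix; it argues by contradiction and compactness (Arzel\`a--Ascoli plus blow-up analysis according to where a hypothetical bad sequence concentrates: interior, boundary region, deep in the neck, or at $\Sigma$), which sidesteps the delicate bookkeeping of borderline modes across arbitrarily long necks. And with the paper's normalisation of the neck metric (the coefficient $16$), the smallest indicial rate on the cylinder is $\tfrac14$ rather than $\tfrac12$; this gap to the next rate is what drives the $e^{-s/4}$ convergence in Section~4.1.
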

By considering a connected sum with a 3-manifold with sufficiently large $b_1$, one can make the dimension $H^1_-(\hat M)$ arbitrarily large. Hence, as a consequence of \cref{thm:main-theorem} we have the following stability result:
\begin{corollary}
	\label{cor:main-cor-existence}
	Given any smoothly embedded link $\Sigma$ in a closed 3-manifold $M$ and let $\mathcal{I}$ be a real line bundle on $M \setminus \Sigma$ with non-trivial monodromy around loops linking $\Sigma$.
	There exists a compact Riemannian 3-manifold $N$ such that $\Sigma$ is the singular set of a of a $\Z_2$-harmonic 1-form on $M \# N$.
\end{corollary}

One might ask if one can make $H^1_-(\hat{M})$ arbitrary large by adding multiple links to $\Sigma$. We don't expect that this will work. For example, let $C$ be a Riemann surface and let $M = C \times S^1$.  We pick $\Sigma = \{p_i\} \times S^1$ where $\{p_i\}$ is an even number of $m$ distinct points on $C$. To calculate $H^1_-(\hat{M})$ we first calculate $H^1(C - \{p_i\})$ using Mayer-Vietoris, which is isomorphic to $\R^{2g + m -1}$. Using the K\"unneth formula, we calculate $H^1(M \setminus \Sigma)$, which is $\R^{2g + m}$. Using the Euler characteristic of the double cover, one can show that $H^1(\hat{M}) = \R^{4g - 2 + 2m}$.
Finally, notice that $$
\dim H^1_-(\hat{M}) 
= \dim H^1(\hat{M}) - \dim H^1(M \setminus \Sigma) = 2g -2 + m.
$$
So by enlarging $\Sigma$ by adding multiple connected components, one can enlarge $H^1_-(\hat{M})$ by order $m$, while the obstruction in Theorem \ref{thm:main-theorem} grows by order $2m$. A similar behaviour was expected by \mycite{HeParker2024}. Namely, in Conjecture 1.14, they claim that there is no $\Z_2$-harmonic 1-form on the round 3-sphere. They also needed a stability condition when considering the connected sum of two $\Z_2$-harmonic 1-forms. At the same time, this was unnecessary for $\Z_2$-harmonic spinors.

\subsection*{Acknowledgements}
This work is supported by the Universit\'e Libre de Bruxelles via the ARC grant ``Transversality and reducible solutions in the Seiberg--Witten theory with multiple spinors''. The author is grateful to Andriy Haydys for the many discussions about this work. He is also thankful to Greg Parker, pointing him to the work of \mycite{He2022}. I'm also thankful to the anonymous referee who pointed me to some errors in my initial version. He also did the cohomology calculations in the previous paragraph.

\section{The model metric}
\label{sec:model-metric}
From now on we assume $(M,g)$ to be a closed Riemannian 3-manifold, $\Sigma$ be a smoothly embedded closed 1-dimensional manifold inside $M$ and $\mathcal{I}$ be a real Euclidean line bundle on $M \setminus \Sigma$ with non-trivial monodromy around any loop linking $\Sigma$.
We write $\hat{M}$ for the double cover of $M \setminus \Sigma$ that trivializes $\mathcal{I}$. Finally, let $\overline{M}$ be the branched double cover of $M$ over $\Sigma$ that trivializes $\mathcal{I}$. Notice that $\bar{M}$ is a compact manifold and $\hat{M}$ is the interior of $\overline{M}$.

\begin{figure}[ht!]
	\centering
    \vspace{-0.25cm}
	\begin{tikzpicture}
		\draw (9.5,0cm) ellipse(3.0cm and 2.6cm);
	
		\fill[white] (0,1.71cm) rectangle (8.5cm,0.29cm);
		\pic[rotate=90] at(0, 1cm){torus={0.695cm}{0.05mm}{75}};
		\pic[rotate=90] at(1cm, 1cm){torus={0.65cm}{0.5mm}{75}};
		\pic[rotate=90] at(2.075cm, 1cm){torus={0.60cm}{1.0mm}{75}};
		\pic[rotate=90] at(3.25cm, 1cm){torus={0.50cm}{2.0mm}{75}};
		\pic[rotate=90] at(8cm, 1cm){torus={0.50cm}{2.0mm}{75}};
		\draw (0, 1.71cm) -- (4.5cm, 1.71cm);
		\draw (0, 0.29cm) -- (4.5cm, 0.29cm);
		\draw[densely dashed] (4.5, 1.71cm) -- (6.75cm, 1.71cm);
		\draw[densely dashed] (4.5, 0.29cm) -- (6.75cm, 0.29cm);
		\draw (6.75, 1.71cm) -- (8.25cm, 1.71cm);
		\draw (6.75, 0.29cm) -- (8.25cm, 0.29cm);
		\draw (8.25cm,1.71cm) arc (-90:0:0.25cm);
		\draw (8.25cm,0.29cm) arc (90:0:0.25cm);
	
		\fill[white] (0,-0.29cm) rectangle (8.5cm,-1.71cm);
		\pic[rotate=90] at(0, -1cm){torus={0.695cm}{0.05mm}{75}};
		\pic[rotate=90] at(1cm, -1cm){torus={0.65cm}{0.5mm}{75}};
		\pic[rotate=90] at(2.075cm, -1cm){torus={0.60cm}{1.0mm}{75}};
		\pic[rotate=90] at(3.25cm, -1cm){torus={0.50cm}{2.0mm}{75}};
		\pic[rotate=90] at(8cm, -1cm){torus={0.50cm}{2.0mm}{75}};
		\draw (0, -0.29cm) -- (4.5cm, -0.29cm);
		\draw (0, -1.71cm) -- (4.5cm, -1.71cm);
		\draw[densely dashed] (4.5, -0.29cm) -- (6.75cm, -0.29cm);
		\draw[densely dashed] (4.5, -1.71cm) -- (6.75cm, -1.71cm);
		\draw (6.75, -0.29cm) -- (8.25cm, -0.29cm);
		\draw (6.75, -1.71cm) -- (8.25cm, -1.71cm);
		\draw (8.25cm,-0.29cm) arc (-90:0:0.25cm);
		\draw (8.25cm,-1.71cm) arc (90:0:0.25cm);
		
		\fill[white] (1cm,0.71cm) rectangle (9.5cm,-0.71cm);
		\fill[white] (1cm,0cm) ellipse (0.19cm and 0.71cm);
		\pic[rotate=90] at(1, 0cm){torus={0.695cm}{0.05mm}{75}};
		\pic[rotate=90] at(2cm, 0cm){torus={0.65cm}{0.5mm}{75}};
		\pic[rotate=90] at(3.075cm, 0cm){torus={0.60cm}{1.0mm}{75}};
		\pic[rotate=90] at(4.25cm, 0cm){torus={0.50cm}{2.0mm}{75}};
		\pic[rotate=90] at(9cm, 0cm){torus={0.50cm}{2.0mm}{75}};
		\draw (1cm, 0.71cm) -- (5.5cm, 0.71cm);
		\draw (1cm, -0.71cm) -- (5.5cm, -.71cm);
		\draw[densely dashed] (5.5, 0.71cm) -- (7.75cm, 0.71cm);
		\draw[densely dashed] (5.5, -0.71cm) -- (7.75cm, -0.71cm);
		\draw (7.75, 0.71cm) -- (9.25cm,0.71cm);
		\draw (7.75, -0.71cm) -- (9.25cm, -0.71cm);
		\draw (9.25cm,0.71cm) arc (-90:0:0.25cm);
		\draw (9.25cm,-0.71cm) arc (90:0:0.25cm);
	
		\pic[rotate=0] at(11cm, 0.75cm){torus={0.75cm}{2.5mm}{65}};
		\pic[rotate=0, fill=white, color=white] at(11cm, 0.75cm){torus={1.0cm}{0.5mm}{55}};
	
		\pic[rotate=0] at(10.5cm, -1.25cm){torus={0.75cm}{2.5mm}{65}};
		\pic[rotate=0, fill=white, color=white] at(10.5cm, -1.25cm){torus={1.0cm}{0.5mm}{55}};
	\end{tikzpicture}
	\caption{{Schematic picture of $M$ and $\hat{M}$ near the singular set $\Sigma$. In both cases we can identify the tubular neighbourhood of $\Sigma$ with disjoint copies of $D \times S^1 \simeq \R^+ \times T^2$.}}
	\label{fig:schematic-picture-hat-M-multiple-components}
\end{figure}

In this subsection we define a 1-parameter family of $\Z_2$-invariant metrics $g_s$, that equals $g$ outside some neighbourhood of $\Sigma$. This family of metrics will be the main interest of this paper. To define this family of metrics, we first have to understand the topology of $\Sigma$ inside $M$. Namely, if one ignores the embedding of the singular set, $\Sigma$ is diffeomorphic a disjoint sum of circles. As the embedding of $\Sigma$ is smooth, there is a tubular neighbourhood of $\Sigma$ in $M$ that is diffeomorphic to the disjoint union of solid tori. Viewing a solid torus as $\R^+ \times T^2$, one gets the schematic picture of $M$ given in \cref{fig:schematic-picture-hat-M-multiple-components}.

The neighbourhood of $\Sigma$ inside $\hat{M}$ will also be a disjoint union of copies of $\R^+ \times T^2$. Indeed, to construct $\hat{M}$ one considers the universal cover of $M$ and quotients it by the kernel of the monodromy action from the line bundle $\mathcal{I}$. Because the monodromy action on $\mathcal{I}|_{\R^+ \times T^2}$ is non-trivial, the double cover over $\R^+ \times T^2$ in $\hat{M}$ is path-connected and therefore it must be the quotient of the universal cover $\R^+ \times \R^2$ by the kernel of the monodromy action on $\mathcal{I}|_{\R^+ \times T^2}$.
As explained by \mycite{Haydys2022}, this quotient is again disjoint copies of $\R^+ \times T^2$, however the path of the meridian is doubled.

We conclude that on the double cover of a path-connected component of a tubular neighbourhood of $\Sigma$ we have the following coordinates: first we have a parametrization $\theta \in S^1$ of the path-connected component of $\Sigma$ and we have the radial distance $r \ge 0$ from $\Sigma$. We can use these coordinates both on $M$, $\hat{M}$ and $\overline{M}$. Secondly, we have a parametrization $\phi \in S^1$ of the meridian around the connected component of $\Sigma$. The length of the path of $\phi$ is doubled if we work on $\hat{M}$ instead of $M$. Explicitly, if we use Fermi coordinates, the metric on a solid tori in $M$ is of the form
$$
g = \d r^2 + r^2 \d \phi^2 + \d \theta^2 + \O(r),
$$
but on $\hat{M}$ and $\overline{M}$ it must be
$$
g = \d r^2 + 4 r^2 \d \phi^2 + \d \theta^2 + \O(r).
$$

\begin{figure}[ht!]
	\centering
	\begin{tikzpicture}
		\pic[rotate=90] at(0, 0){torus={0.985cm}{0.15mm}{75}};
		\pic[rotate=90] at(1cm, 0){torus={0.9cm}{1mm}{75}};
		\pic[rotate=90] at(2.075cm, 0){torus={0.85cm}{1.5mm}{75}};
		\pic[rotate=90] at(3.25cm, 0){torus={0.75cm}{2.5mm}{75}};
		\pic[rotate=90] at(7cm, 0){torus={0.75cm}{2.5mm}{75}};
	
		\pic[rotate=0] at(10.5cm, 0.75cm){torus={0.75cm}{2.5mm}{65}};
		\pic[rotate=0, fill=white, color=white] at(10.5cm, 0.75cm){torus={1.0cm}{0.5mm}{55}};
	
		\pic[rotate=0] at(9.5cm, -0.75cm){torus={0.75cm}{2.5mm}{65}};
		\pic[rotate=0, fill=white, color=white] at(9.5cm, -0.75cm){torus={1.0cm}{0.5mm}{55}};
	
		\draw (10.5,-2) arc(-90:90:2cm);
	
		\draw (0, 1.01cm) -- (4cm, 1.01cm);
		\draw (0, -1.01cm) -- (4cm, -1.01cm);
	
		\draw[densely dashed] (4, 1.01cm) -- (6.25cm, 1.01cm);
		\draw[densely dashed] (4, -1.01cm) -- (6.25cm, -1.01cm);
	
		\draw (6.25, 1.01cm) -- (7cm, 1.01cm);
		\draw (6.25, -1.01cm) -- (7cm, -1.01cm);
	
		\draw (7cm, 1.01cm) .. controls (8cm, 1.01cm) and (9cm,2cm) .. (10.5cm,2cm);
		\draw (7cm, -1.01cm) .. controls (8cm, -1.01cm) and (9cm,-2cm) .. (10.5cm,-2cm);
	
		\draw [decorate,decoration={brace,amplitude=10pt,raise=0}]
		(0cm,2.25cm) -- (3.1cm,2.25cm) node[midway,yshift=2em]{Boundary region};
		\draw [decorate,decoration={brace,amplitude=10pt,raise=0}]
		(3.4cm,2.25cm) -- (6.85cm,2.25cm) node[midway,yshift=2em]{Neck region};
		\draw [decorate,decoration={brace,amplitude=10pt,raise=0}]
		(7.15cm,2.25cm) -- (12.5cm,2.25cm) node[midway,yshift=2em]{Interior region};
	\end{tikzpicture}
	\caption{{For each neighbourhood of a connected component of $\Sigma$ with coordinates $(r, \phi, \theta)$, we call the region where $r \in [0, R_0)$ the boundary region. We call the rest of this tubular neighbourhood the neck region. The rest of $\hat{M}$ we call the interior region.}}
	\label{fig:schematic-picture-hat-M-single-component}
\end{figure}

With these coordinates, we now define the metric $g_s$ on $\hat{M}$. First, consider the tubular neighbourhood of a single connected component of $\Sigma$ with the coordinates $(r, \phi, \theta)$. As depicted in \cref{fig:schematic-picture-hat-M-single-component}, we split up this tubular neighbourhood into two regions. Namely, we fix a constant $R_0 > 0$ and we call $(0, R_0) \times T^2$ the \textit{boundary region}. Similarly, let $s \in (0, \infty)$ be the length parameter from \cref{thm:main-theorem}, and call $(R_0 + R_0 + s) \times T^2$ the \textit{neck region}. The remaining part of $\hat{M}$ we call the \textit{interior region}.

On $\hat{M}$, we equip each neck region with the flat cylindrical metric 
\begin{equation}
	\label{eq:metric-neck-region}
	g_s = \d r^2 + 16 \d \phi^2 + \d \theta^2.
\end{equation}
The choice of the constant $16$ will become apparent later, when we study the behaviour of the $A$ and $B$ term of the expansion of the $\Z_2$-harmonic 1-form.
On the intersection between the necks and the interior region, we interpolate $g_s$ with $g$ and thus we can equip the interior region with the fixed metric $g$. On each boundary region, we need to interpolate the edge metric with a cylindrical metric. We do this by defining a smooth function $\tilde r\colon [0, \infty) \to (0, 2]$, which is depicted in \cref{fig:graph-of-tilde-r}, that has the properties $\tilde r(r) = r$ near zero and $\tilde r(r) = 2$ near $R_0$. On the boundary and neck regions inside $\hat{M}$, we define $g_s$ as 
$$
g_s = \d r^2 + 4 \tilde r(r)^2 \d \phi^2 + \d \theta^2.
$$

\begin{figure}[ht!]
	\centering
	\begin{tikzpicture}
		\draw[->] (-1, 0) -- (5, 0) node[right] {$r$};
  		\draw[->] (0, -1) -- (0, 3) node[above] {$\tilde{r}$};
		\draw [dashed] (0,0) -- (2.5,2.5) node[above right] {$\tilde r = r$};

		\draw (0, 0) .. controls (2, 2) and (2,2) .. (3,2);
		\draw (3, 2) -- (5, 2);
		\draw (-0.1, 2) node[left]{$2$} -- (0.1, 2);
		\draw (4, -0.1) node[below]{$R_0$}-- (4, 0.1);
	\end{tikzpicture}
	\caption{\textit{Graph of the function $\tilde r\colon [0, \infty) \to (0, 2]$.}}
	\label{fig:graph-of-tilde-r}
\end{figure}

This gives a a smooth model metric on $\hat{M}$ and from now on we equip $\hat{M}$ with $g_s$. As this metric is $\Z_2$-invariant, it automatically defines a model metric $g_s$ on $M$ and satisfies conditions 1 and 2 of \cref{thm:main-theorem}.

\subsection{Regularity estimates}
\label{subsec:regularity-estimates}
According to \mycite{HunsickerMazzeo2005}, there is a Hodge theory for $L^2$-bounded $\Z_2$-harmonic 1-forms on $(M, g_s)$ with $\Sigma$ as singular set. To find this $\Z_2$-harmonic 1-form, one picks an element of $H^1_-(\hat{M})$ and chooses a $\Z_2$-antisymmetric smooth representative $\omega \in \Omega^1(\hat{M})$. Because the $\Z_2$-antisymmetry forces $\omega$ to be exact on the boundary and neck regions, we can assume that $\omega$ is compactly supported on the interior region. To find an harmonic representative, we need to find a $u \in C^\infty(\hat{M})$ that is anti-symmetric and solves $\d^*(\omega + \d u) = 0$.
Equivalently, $u$ must solve 
\begin{equation}
	\label{eq:equation-for-z2-harmonic-1-form}
	\Delta u = - \d^* \omega.
\end{equation}
According to \mycite{Donaldson2021}, Proposition 3.4, the Laplacian is an isomorphism between suitable Banach spaces and thus $u$ can always be found. 

In this section we study the regularity properties of the Laplacian. Especially, 
we want to show that in suitable Banach spaces, there is a constant $C > 0$ such that for all $u$ in the domain of $\Delta$, $\|u \| \le C \|\Delta u \|$. Our goal in this chapter is to show that $C$ can be chosen uniformly for every parameter $s > 0$. As an intermediate step we first show that there are regularity estimates with uniformly bounded constants.

\mycite{Donaldson2021} defined his H\"older norms near the link as follows: For a given $k \in \N$ and $\alpha \in (0,\frac{1}{2})$, he considers $\underline{\mathcal{T}}_k$ to be the set differential operators, given by degree $k$ monomials generated by $
r\frac{\del}{\del r}$, $\frac{\del }{\del \phi}$, and  $\frac{\del}{\del \theta}$.
He defines his H\"older norm as
\begin{equation}
	\label{eq:holder-norm-donaldson}
	\|u\|_{D^{k, \alpha}} := \max_{\substack{0 \le j \le k, \\ D \in \underline{\mathcal{T}}_j}} \|D u \|_{C^{0, \alpha}},
\end{equation}
where the $C^{0, \alpha}$ norm is taken with the standard norm on $\R^2 \times S^1$.

In this paper, we extend this definition and assume that $\underline{\mathcal{T}}_k$ is generated by
\begin{equation}
	\label{eq:my-holder-norm}
	\tilde{r}\frac{\del}{\del r}, \quad \frac{1}{2}\frac{\del }{\del \phi}, \quad \text{and} \quad \frac{\del}{\del \theta}
\end{equation}
on the boundary and neck regions in $\overline{M}$. On these regions we use \cref{eq:holder-norm-donaldson} for our H\"older norm. This way, it is equivalent to the standard H\"older norm generated by $g_s$ on the neck regions. Finally, extend this H\"older norm to the interior region by considering the maximum of this H\"older norm and the standard H\"older norm that is generated by $g$ on the interior region.
We denote the restriction of this H\"older space to $\Z_2$-antisymmetric functions as $D^{k, \alpha}(\overline{M}, g_s)$ or $D^{k, \alpha}(\overline{M})$ if the choice of metric is clear from the context.

In order to define the domain of $\Delta_{g_s}$, Donaldson \cite{Donaldson2021} considered
$$E^{k+2, \alpha} := \{
	u \in D^{k+2, \alpha}(\overline{M}) \colon \Delta_{g_s} u \in D^{k, \alpha}(\overline{M})
\}$$
and he made this into a Banach by considering the norm 
$$
\|u\|_{E^{k+2, \alpha}(\overline{M})} = \|\Delta_{g_s} u\|_{D^{k, \alpha}(\overline{M})}.
$$
In Proposition 3.4 in \cite{Donaldson2021}, Donaldson showed that $\Delta_{g_s}$ is an isomorphism between $E^{k+2, \alpha}(\overline{M})$ and $D^{k, \alpha}(\overline{M})$.

\begin{remark}
	Donaldson showed there is an isomorphism defined a weighted version of the Laplacian, but this weight only depends on the mean curvature of $\Sigma$. In our case, this mean curvature is zero, because near $\Sigma$ the metric $g_s$ descends to $\d r^2 + r^2 \d \phi^2 + \d \theta^2$ on $M$, which is the flat metric on $\R^2 \times S^1$. Therefore, we can use unweighted norms. This will become important later, when we perturb $\Sigma$.
\end{remark}
In order to show Proposition 3.4 in \cite{Donaldson2021}, Donaldson \cite{Donaldson2021} had to setup a Schauder theory for these norms. From his work, some extra estimates can be distilled, which are needed for this paper. These estimate uses fairly standard techniques based on the Arzela--Ascoli  theorem. We add them for completeness.

\begin{lemma}
	\label{lem:non-uniform-bounded-inverse}
	Let $k \in \N$ and $\alpha \in (0,\frac{1}{2})$.
	Let $U \subset \overline{M}$ be a small, closed tubular neighbourhood of $\Sigma$ on which $g_s = \d r^2 + 4 r^2 \d \phi^2 + \d \theta^2$.
	There exists constant $C > 0$, independent of the length $s$ of the necks, such that for all $u \in E^{k+2, \alpha}(\overline{M})$ that are supported on $U$, 
	$$
	\|u\|_{D^{k+2, \alpha}(\overline{M})} \le C \:
		\| \Delta_{g_s} u\|_{D^{k, \alpha}(\overline{M})}.
	$$
\end{lemma}
\begin{proof}
	First notice that $\Delta_{g_s} = \Delta_{g_0}$ on $U$. Hence, uniformity of $C$ follows trivially, once we have shown this statement for a fixed $g_s$.

	Assume that this theorem is false. Then there exist a sequence $u_i \in E^{k+2, \alpha}(\overline{M})$ such that 
	$$
	\|u_i\|_{D^{k+2,\alpha}(\overline{M})} = 1,\quad \operatorname{Supp}(u_i) \subset U, \text{ and } \|\Delta_{g} u_i\|_{D^{k,\alpha}(\overline{M})} \to 0.
	$$
	Fix $0 < \beta < \alpha < \frac{1}{2}$. We have a compact inclusion of $D^{k+2,\alpha}(U)$ into $D^{k+2,\beta}(U)$. Therefore, without loss of generality we assume that $u_i$ converges to some $u \in D^{k+2,\beta}(\overline{M})$. Moreover $\Delta u = 0$. By the maximum principle $u = 0$.

	Next, let $\chi_j(\theta)$ be a smooth, finite partition of unity of $\Sigma$. Assume that the support of each $\chi_j$ is sufficiently small. By Proposition 2.2 and remark 2.3 (3) in \cite{Donaldson2021}, for each $i\in \N$
	$$
	\|u_i\|_{D^{1, \alpha}(\overline{M})} 
	\le \sum_j \|\chi_j\cdot u_i \|_{D^{1, \alpha}(\overline{M})} 
	\le C \sum_k \|\Delta_{g_s}(\chi_j\cdot u_i)\|_{D^{0, \beta}(\overline{M})}.
	$$
	Expanding the Laplacian yields
	\begin{align}
		\|u_i\|_{D^{1, \alpha}(\overline{M})} 
		\le C&\sum_j 
		\|\chi_j\cdot \Delta_{g_s}u_i\|_{D^{0, \beta}(\overline{M})} +
		\|u_i\cdot \Delta_{g_s}\chi_k\|_{D^{0, \beta}(\overline{M})} + 
		\left\|\frac{\del u_i}{ \del \theta} \frac{\del \chi_j}{\del \theta}\right\|_{D^{0, \beta}(\overline{M})}.
	\end{align}
	The right hand side of this inequality converges to zero as $i$ converges to infinity.
	Therefore $u_i$ converges to zero in $D^{1, \alpha}(\overline{M})$. At the same time, a careful reading of the proof of Proposition 2.6 in \cite{Donaldson2021}, gives us the estimate
	$$
	\|u_i\|_{D^{k+2, \alpha}(\overline{M})} 
		\le C \left[
			\|\Delta_{g_s} u_i\|_{D^{k, \alpha}(\overline{M})} 
			+ \|u_i\|_{D^{1, \alpha}(\overline{M})} 
		\right].
	$$
	This would imply $u_i$ converges to zero in $D^{k+2, \alpha}(\overline{M})$, which contradicts the fact that $\|u_i\|_{D^{k+2,\alpha}(\overline{M})} = 1$.
\end{proof}

\begin{lemma} 
	\label{lem:Schauder-estimate-Holder}
	Let $k \in \N$ and $\alpha \in (0,\frac{1}{2})$.
	Let $U \subset \overline{M}$ be a small, closed tubular neighbourhood of $\Sigma$ on which $g_s = \d r^2 + 4 r^2 \d \phi^2 + \d \theta^2$.
	There exists constant $C > 0$, independent of the length $s$ of the necks, such that for all $u \in E^{k+2, \alpha}(\hat{M})$,
	$$
	\|u\|_{D^{k+2, \alpha}(\overline{M})} \le C \left(
		\|\Delta_g u\|_{D^{k, \alpha}(\overline{M})}
		+ \|u\|_{C^{0}(\overline{M} \setminus U)}.
	\right)
	$$
\end{lemma}
\begin{proof}
	Let $U' \supset\supset U$ be another tubular neighbourhood of $\Sigma$ like $U$ and let
	$\chi_r(r)$ be a smooth step function that is supported in $U'$ and $\chi_r|_U = 1$.
	By splitting $u = \chi_r \: u + (1-\chi_r) u$, we get that
	$$
	\|u\|_{D^{k+2, \alpha}(\overline{M})} \le \|\chi_r \:u\|_{D^{k+2, \alpha}(\overline{M})} + \|(1-\chi_r)u\|_{D^{k+2, \alpha}(\overline{M})}.
	$$
	By Lemma \ref{lem:non-uniform-bounded-inverse}, there exists a uniform constant $C > 0$ such that
	\begin{align}
		\|\chi_r \:u\|_{D^{k+2, \alpha}(\overline{M})} \le& C \|\Delta_{g_s}(\chi_r \:u)\|_{D^{k+, \alpha}(\overline{M})}  \\
		\le& C 
		\left(
			\|\chi_r \Delta_{g_s}(u)\|_{D^{k, \alpha}(\overline{M})}	
		 	+ \| u \Delta_{g_s}\chi_r\|_{D^{k, \alpha}(\overline{M})}
			+ \left\|\frac{\del u}{\del r} \frac{\del \chi_r}{\del r}\right\|_{D^{k, \alpha}(\overline{M})}
		\right).
	\end{align}
	We can absorb (the derivatives of) $\chi_r$ in to the constant $C$ and so we get
	\begin{align}
		\|\chi_r \:u\|_{D^{k+2, \alpha}(\overline{M})}
		\le& C 
		\left(
			\|\Delta_{g_s}(u)\|_{D^{k, \alpha}(\overline{M})}
			+ \| u \|_{D^{k+1, \alpha}(\operatorname{Supp}(\d \chi_r ))}
		\right).
	\end{align}
	Using an elliptic regularity estimate on the support of $\d \chi_r$, this can be written as
	\begin{align}
		\|\chi_r \:u\|_{D^{k+2, \alpha}(\overline{M})}
		\le& C 
		\left(
			\|\Delta_{g_s}(u)\|_{D^{k, \alpha}(\overline{M})}
			+ \| u \|_{C^{0}(\overline{M} \setminus U)}
		\right)
	\end{align}
	for some uniform constant $ C > 0$.
	
	Next we consider the term $\|(1-\chi_r)u\|_{D^{k+2, \alpha}(\overline{M})}$. We claim that there exists a uniform constant $C > 0$, such that 
	$$
	\|(1-\chi_r)u\|_{D^{k+2, \alpha}(\overline{M}\setminus U')} \le
	C \left(
			\|\Delta_{g_s}((1-\chi_r)u)\|_{D^{k, \alpha}(\overline{M}\setminus U')}
			+ \|(1-\chi_r) u \|_{C^{0}(\overline{M} \setminus U')}
		\right)
	$$
	If this claim is true, we can repeat the first part of this proof to conclude the statement.

	To show this claim, we show that $g_s$ has uniform bounded geometry on $\overline{M} \setminus U'$.
	By Theorem 1.2 in \mycite{Hebey1999}, it is sufficient to show that there is a uniform bound on (the derivatives of) the Ricci curvature and the injectivity radius has a uniform lower bound. The last condition is satisfied, as on the support of $1 - \chi_r$, none of the circle fibres decay. The curvature is uniformly bounded, as on the neck the curvature is zero.

	Because $g$ has bounded geometry on $\overline{M} \setminus U$, we can cover $\hat{M}$ with small balls with small, but fixed radius and on each ball the $D^{k, \alpha}$-norm is equivalent to the $C^{k, \alpha}$-norm on $\R^3$. Hence on each ball, we have a local Schauder estimate. Taking the supremum over all balls yield the estimate, given in this claim.
\end{proof}

\subsection{Bounded inverse}
With these regularity estimates, we now show that $\Delta$ has a uniform bounded inverse. Although this will be a long and technical proof, it will be one of the most pivotal results, as this proposition will enable us to compare $\Z_2$-harmonic 1-forms for different lengths of necks.

\begin{proposition}
	\label{prop:bounded-inverse}
	There exists a constant $C > 0$, independent of the length $s$ of the necks, such that for all $u \in E^{k+2, \alpha}(\overline{M})$
	$$
	\|u\|_{D^{k+2, \alpha}(\overline{M})} \le C \:
		\| \Delta_{g_s} u\|_{D^{k, \alpha}(\overline{M})}.
	$$
\end{proposition}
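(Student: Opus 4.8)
The plan is to argue by contradiction, exploiting the compactness/bootstrap structure that the regularity estimate of Lemma~\ref{lem:Schauder-estimate-Holder} provides. Suppose no uniform constant $C$ exists. Then there is a sequence of neck-lengths $s_j \to \infty$ and functions $u_j \in C^{k+2,\alpha}_{\mathrm{cf}}(\hat M)$ (with the metric $g_{s_j}$) normalised so that $\|u_j\|_{C^{k+2,\alpha}_{\mathrm{cf}}} = 1$ while $\|\tilde r^2 \Delta_g u_j\|_{C^{k,\alpha}_{\mathrm{cf}}} \to 0$. By Lemma~\ref{lem:Schauder-estimate-Holder} the normalisation together with $\|\tilde r^2\Delta_g u_j\|\to 0$ forces $\|u_j\|_{C^0(\hat M)}$ to stay bounded away from $0$ (otherwise the right-hand side of the Schauder estimate would go to zero, contradicting $\|u_j\|_{C^{k+2,\alpha}_{\mathrm{cf}}}=1$ for $j$ large); so there are points $x_j \in \hat M$ with $|u_j(x_j)| \ge c_0 > 0$.

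Next I would analyse where the $x_j$ can accumulate, using the geometry of $(\hat M, g_{s_j})$ with respect to the conformally rescaled metric $g_{\mathrm{cf}}$. There are three cases according to the three regions of Figure~\ref{fig:schematic-picture-hat-M-single-component}: (i) $x_j$ stays in (a fixed neighbourhood of) the interior region, (ii) $x_j$ stays in a boundary region, or (iii) $x_j$ escapes down a neck, i.e. its $r$-coordinate (equivalently its $g_{\mathrm{cf}}$-distance into the neck) tends to infinity. In cases (i) and (ii) the relevant part of $(\hat M, g_{\mathrm{cf}})$ is, up to the identification induced by $\tilde r$, \emph{independent of $s_j$} — the interior region carries the fixed metric $g$, and each boundary region carries the fixed metric $\tilde r^{-2}(\d r^2 + 4\tilde r^2\,\d\phi^2 + \d\theta^2)$. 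So after passing to a subsequence and recentring, the bounded-geometry estimate from Lemma~\ref{lem:Schauder-estimate-Holder} together with Arzelà--Ascoli (using $C^{k+2,\alpha}_{\mathrm{cf}}$-bounds to get $C^{k+2,\alpha'}_{\mathrm{cf}}$-convergence on compact sets for $\alpha' < \alpha$) produces a nonzero limit $u_\infty$ on the fixed (complete, finite) model, which is $\Z_2$-antisymmetric, $C^{k+2,\alpha}_{\mathrm{cf}}$-bounded, and solves $\tilde r^2\Delta_g u_\infty = 0$, hence $\Delta_g u_\infty = 0$. But on $(M,g_{s_0})$ the Laplacian is an isomorphism on the relevant spaces (Donaldson, Proposition~3.4, as quoted above), so its kernel is trivial — a contradiction. (A small care point: one must also rule out $|u_\infty|$ attaining its supremum ``at infinity'' in these fixed models, but that is handled by the same isomorphism statement, or by a maximum-principle/unique-continuation argument since $u_\infty$ is harmonic and $L^2$ on the complete metric.)

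The main obstacle is case (iii), where $x_j$ runs off to infinity down a lengthening neck; this is the only genuinely $s$-dependent situation and the reason the proposition is non-trivial. Here I would recentre the cylindrical coordinate so that $x_j$ sits at the origin, and pass to the limit on the \emph{infinite} flat model $(\R \times T^2, \d r^2 + 16\,\d\phi^2 + \d\theta^2)$ — note that on the neck $\tilde r \equiv 2$, so $g_{\mathrm{cf}}$ and $g_{s}$ agree up to the constant factor $\tfrac14$ and the metric is genuinely translation-invariant and $s$-independent. The limit $u_\infty$ is then a $\Z_2$-antisymmetric, globally $C^{k+2,\alpha}$-bounded harmonic function on $\R\times T^2$ that is not identically zero. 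Separating variables, a bounded harmonic function on $\R\times T^2$ must be constant in $r$ and harmonic on $T^2$, hence constant; the $\Z_2$-antisymmetry (the deck transformation acts by doubling the $\phi$-period, so a constant anti-invariant function is forced to vanish) then kills it — contradiction. The quantitative input that makes this rigorous, rather than merely the existence of a subsequential limit, is precisely the uniform Schauder estimate of Lemma~\ref{lem:Schauder-estimate-Holder}, which gives $j$-independent $C^{k+2,\alpha}_{\mathrm{cf}}$-control on balls of fixed $g_{\mathrm{cf}}$-radius no matter how far into the neck one has recentred; combined with $\|\tilde r^2\Delta_g u_j\|_{C^{k,\alpha}_{\mathrm{cf}}}\to 0$, this yields local $C^{k+2,\alpha'}$-convergence to a harmonic limit on the model cylinder and closes the argument.
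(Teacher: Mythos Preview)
Your overall architecture---contradiction, Schauder estimate to force $\|u_j\|_{C^0}\ge c_0$, recentring at $x_j$, Arzel\`a--Ascoli, then a Liouville theorem on the limiting model---is exactly the paper's strategy, and your case~(iii) (limit on the infinite cylinder $\R\times T^2$) matches the paper's Case~3. The gap is in your cases~(i) and~(ii).

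You claim that when $x_j$ stays in the interior or boundary region the limit $u_\infty$ lives on ``the fixed (complete, finite) model'' $(\hat M,g_{s_0})$, and you then invoke Donaldson's isomorphism there. But the limit does \emph{not} live on $(\hat M,g_{s_0})$. The functions $u_j$ are defined on $(\hat M,g_{s_j})$; if you anchor the interior region and let $j\to\infty$, the necks stretch to infinite length, and the limit space is (interior region)$\,\cup\,$(half-infinite cylinders), not the closed edge space $(\hat M,g_{s_0})$. You cannot simply restrict $u_j$ to a copy of $(\hat M,g_{s_0})$ and apply Donaldson, because nothing controls $u_\infty$ at the artificial cut you would be making in the neck. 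Nor can you cut off and transplant: the commutator $[\Delta,\chi]u_j$ is bounded but not small, so $\Delta(\chi u_j)\not\to 0$. The paper therefore treats these as genuine non-compact limits and proves the Liouville statement directly: on each cylindrical end one Fourier-expands, observes that boundedness kills the growing exponential in every $\Z_2$-odd mode (there is no zero mode since $n$ is odd), so $u_\infty$ decays along each end, and then the maximum principle forces $u_\infty\equiv 0$. This is the step your sketch is missing.

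You have also collapsed two distinct sub-cases of ``$x_j$ in a boundary region''. If $x_j$ stays away from $\Sigma$ the argument is as above (limit space is boundary region with one edge end and one cylindrical end; Fourier/Bessel analysis plus the maximum principle). If $x_j\to\Sigma$, however, in the conformal metric $g_{\mathrm{cf}}$ the point runs off to infinity in a \emph{different} direction, and the paper handles this separately (its Case~4) by a cutoff argument that \emph{does} reduce to Donaldson's fixed-$s_0$ isomorphism---the point being that near $\Sigma$ the $g_{\mathrm{cf}}$-distance between $\{\chi_i=1\}$ and $\{\chi_i=0\}$ grows, so $\d\chi_i\to 0$ and the commutator error is small. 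Your parenthetical (``handled by the same isomorphism statement, or by a maximum-principle/unique-continuation argument since $u_\infty$ is harmonic and $L^2$'') gestures at this but is not correct as written: there is no reason $u_\infty$ should be $L^2$, and Donaldson's isomorphism is not available on the non-compact limit spaces arising in your other sub-cases.
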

\begin{proof}
	Suppose that this theorem is false. Then there exists a sequence $s_i> 0$ and $u_i \in E^{k, \alpha}(\overline{M}, g_{s_i})$ such that $s_i$ diverges to infinity, $\| u_i\|_{D^{k+2, \alpha}(\overline{M}, g_{s_i})} = 1$, and $\|\Delta_{g_{s_i}} u_i\|_{D^{k, \alpha}(\overline{M}, g_{s_i})}$ converges to zero.
	
	By \cref{lem:Schauder-estimate-Holder}, on a compact set $K \subset \hat{M}$ the sequence $u_i$ is bounded below in the $C^0$-norm, i.e. there is a constant $c > 0$ and a sequence $x_i \in K$ such that $|u_i(x_i)| > c$. Depending on the behaviour of $x_i$, we have the following 3 cases to consider:
	\begin{enumerate}
		\item The sequence $x_i$ converges up to a subsequence to a point $x$ inside the interior region.
		\item The sequence $x_i$ converges up to a subsequence to a point $x$ inside a boundary region, but not does not converge to a point in $\Sigma$.
		\item The sequence $x_i$ stays inside a neck region.
	\end{enumerate}
	For each case, we will reach a contradiction using the following steps:
	\begin{enumerate}
		\item Modify the sequence $u_i$ into a new sequence that is defined on some fixed limiting space.
		\item Use Arzela--Ascoli to show that this new sequence converges up to a subsequence to some $u_\infty$. Use local elliptic regularity to show that $u_\infty$ is non-zero a harmonic function.
		\item Show that on the limiting space there is no non-zero harmonic function.
	\end{enumerate}

	\noindent
	\textbf{Case 1: $x_i$ converges up to a subsequence to $x$ in the interior region.} \\
	\textit{Step 1}: For any $t > 0$, let $N_t \subset \hat{M}$ be the union of the interior and neck regions and assume that the necks on $N_t$ have length $t$. By keeping the interior region fixed, we can view $N_{t_1}$ is a subset of $N_{t_2}$ if $t_1 < t_2$. Hence, we can define $N_\infty$ as the union of $N_{s_i}$, which is the interior space and an infinite long neck. We pick $N_\infty$ as our limiting space. The metric $g_{s_i}|_{N_{s_i}}$ induces a metric on $N_\infty$ and because this metric has cylindrical ends, we use the standard H\"older norm on $N_\infty$.
	\\

	\noindent
	\textit{Step 2}: For any $i,j \in \N$, consider the restriction $u_i|_{N_{s_j}}$ of $u_i$ on $N_{s_j}$. By fixing $N_{s_j}$, the Arzela--Ascoli theorem implies that there exists a subsequence which converges to a continuous function on $N_{s_j}$. Using compact exhaustion and the Arzela--Ascoli theorem repeatedly, one can find a subsequence of $u_i$ that converges to some continuous function $u_\infty$ on $N_\infty$. We still denote this subsequence by $u_i$ by abuse of notation.
	The function $u_\infty$ is bounded and $\Z_2$-antisymmetric, as it is the limit of uniformly bounded and $\Z_2$-antisymmetric functions. Also, $u_\infty$ cannot be identically zero, as 
	$|u_\infty(x)| = \lim_{i \to \infty} |u_i(x)|$ must be greater than $c > 0$.
	
	On any pair of compact sets $K \subset \subset K'$ inside $N_\infty$ elliptic regularity states
	$$
	\| u_i - u_j \|_{C^{k+2, \alpha}(K)} \le C \left(
		\| \Delta(u_i - u_j) \|_{C^{k, \alpha}(K')} + \| u_i - u_j \|_{C^{0}(K')}
	\right)
	$$
	As the right-hand side converges to zero, $u_i$ is a Cauchy sequence on $C^{k+2, \alpha}(K)$ and thus $u_\infty$ is at least twice differentiable and $\Delta u_\infty= 0$.
	\\

	\noindent
	\textit{Step 3}: We claim that any bounded, $\Z_2$-antisymmetric, harmonic function on $N_\infty$ vanishes everywhere. Indeed, on a neck inside $N_\infty$, let $r'$ be the distance to the interior region and expand $u_\infty$ into the Fourier modes
	\begin{equation}
        \label{eq:expansion-fourier-modes-1}
        u_\infty = \sum_{\substack{m \in \Z \\ n \text{ odd}}} \hat{u}_{nm}(r') e^{in \phi} e^{im \theta}.
    \end{equation}
	Because $u_\infty$ is harmonic, each Fourier mode must satisfy $$\frac{\del^2 \hat{u}_{nm}}{\del (r')^2} - \left(\frac{n^2}{16} + m^2\right) \hat{u}_{nm} = 0.$$
	Hence there is a family of constants $u^\pm_{nm}$ such that $$\hat{u}_{nm} (r') = u^+_{nm} e^{+ \sqrt{\frac{n^2}{16} + m^2} r'} + u^-_{nm} e^{- \sqrt{\frac{n^2}{16} + m^2} r'}.$$ On each neck $U$ inside $N_\infty$, the projection operator 
	\begin{align}
		\pi_{nm}\colon v \mapsto \frac{1}{4 \pi^2} \int_{T^2}v \: e^{-in \phi} e^{- im \theta}  \d \phi\: \d \theta
	\end{align}
	that sends each function to its $(n,m)$ Fourier mode is a bounded operator on $C^0(U)$. Therefore, the term
	$$
	\lim\limits_{r' \to \infty} |\hat{u}_{nm}(r')| = |u^+_{nm}| \cdot \lim\limits_{r' \to \infty} | e^{+ \sqrt{\frac{n^2}{16} + m^2} r'} |
	$$
	can only be bounded if $u^+_{nm} = 0$. This implies that $$u_\infty = \sum_{\substack{m \in \Z \\ n \text{ odd}}} u^-_{nm} e^{- \sqrt{\frac{n^2}{16} + m^2} r'} e^{in \phi} e^{im \theta}$$ decays when $r'$ goes to infinity. Doing this for each cylindrical end, the maximum principle forces $u_\infty$ to be zero everywhere.
	
	We have reached a contradiction, as $u_\infty(x)$ is bounded below away from zero. Therefore, $\{x_i\}$ cannot converge up to a subsequence to some $x$ in the interior region.
	\\

	\noindent
	\textbf{Case 2: $x_i$ converges up to a subsequence to $x$ in a boundary region, but it does not converge to the singular set $\Sigma$} \\
	\textit{Step 1}: Without loss of generality $x_i$ lies inside a single boundary region. Let $N_{s_i}$ be the union of this boundary region and the neighbouring neck region. By keeping the boundary region fixed, we can say $N_{t_1}$ is a subset of $N_{t_2}$ if $t_1 < t_2$. Hence, we can define $N_\infty$ as the union of $N_{s_i}$, which is the boundary region and an infinitely long neck. We pick this as our limiting space. 
	The metric $g_{s_i}|_{N_{s_i}}$ induces the metric 
	$$
	g = \d r^2 + 4\tilde{r}^2 \d \phi^2 + \d \theta^2
	$$
	on $N_\infty$, which is an edge metric at one end and a cylindrical metric at the other side. We measure functions with the H\"older norm defined in \cref{eq:holder-norm-donaldson,eq:my-holder-norm}.
	\\

	\noindent
	\textit{Step 2}: Using an identical argument as in the first case, one can find a $\Z_2$-antisymmetric, non-zero, bounded, twice differentiable, harmonic function $u_\infty$ on $N_\infty$.
	\\

	\noindent
	\textit{Step 3}:  We claim that any bounded, $\Z_2$-antisymmetric, harmonic function on $N_\infty$ vanishes everywhere. Like in the first case, we consider each Fourier mode $\hat{u}_{nm}(r)$ separately and each Fourier mode is again bounded. This time, each Fourier mode must satisfy
	\begin{equation}
		\label{eq:harmonic-function-on-boundary-and-neck}
		\frac{1}{\tilde{r}}
		\frac{\del}{\del r} \left(\tilde{r} \frac{\del \hat{u}_{nm}}{\del r}\right)
		 - \left(\frac{n^2}{4 \tilde{r}^2} + m^2\right) \hat{u}_{nm} = 0.
	\end{equation}
	When $r$ is sufficiently small, i.e. when $\tilde{r} = r$, then \cref{eq:harmonic-function-on-boundary-and-neck} is the defining equation for the modified Bessel function and so $\hat{u}_{nm}(r)$ are linear combinations of $I_{|n|/2}(|m| \:r) e^{in \phi} e^{im \theta}$ and $K_{|n|/2}(|m| \:r) e^{in \phi} e^{im \theta}$. The only modified Bessel functions that are bounded near zero, are modified Bessel functions of the first kind. Therefore, $\hat{u}_{nm}(r)$ are scalar multiples of $I_{|n|/2}(|m| r) e^{in \phi} e^{im \theta}$.

	Next, we consider the case when $r$ is sufficiently large, i.e. when $\tilde{r} = 2$. \cref{eq:harmonic-function-on-boundary-and-neck} simplifies to the same differential equation found in Case 1 and therefore, $\hat{u}_{nm} (r)$ must be of the form $u^+_{nm} e^{+ \sqrt{\frac{n^2}{16} + m^2} r} + u^-_{nm} e^{- \sqrt{\frac{n^2}{16} + m^2} r}$. Again, boundedness implies that $u^+_{nm} = 0$ and therefore, $\hat{u}_{nm}(r)$ decays when $r$ diverges to infinity.

	We conclude, that each $\hat{u}_{nm}(r) e^{in\phi} e^{im \theta}$ is an harmonic function that vanishes on the boundary of $N_\infty$. By the maximum principle $\hat{u}_{nm}(r) = 0$. This is true for all Fourier modes and so $u_\infty = 0$, which yields a contradiction.
	\\

	\noindent
	\textbf{Case 3: $x_i$ stays inside a neck region.} \\
	\textit{Step 1}: Let $N_{s_i}$ be the neck region where $x_i$ resides and denote $r^{\mathrm{bdr}}_i$ and $r^{\mathrm{int}}_i$ the distance between $x_i$ and the neighbouring boundary or interior region respectively. If $r^{\mathrm{int}}_i$ is bounded, one can repeat the argument given in the first case. Similarly, if $r^{\mathrm{bdr}}_i$ is bounded, one can repeat the second case. So now we assume that both $r^{\mathrm{bdr}}_i$ and $r^{\mathrm{int}}_i$ diverges.

	On the neck $N_{s_i}$, we translate the $r$-coordinate such that $r = 0$ at $x_i$. With this reparametrization we can identify $N_{s_i}$ with $[c_i, C_i] \times T^2$ where $c_i$ converges to $- \infty$ and $C_i$ converges to $+ \infty$. This way we can take the union of all $N_{s_i}$ and get $N_\infty = \R \times T^2$ as the limiting space. We equip $N_\infty$ with the metric $\d r^2 + 16 \d \phi^2 + \d \theta^2$ and we use the standard H\"older norms on $N_\infty$.
	\\

	\noindent
	\textit{Steps 2 and 3}: Here we use an identical argument as in the second case. The only difference is that we have cylindrical metrics on both ends on $N_\infty$. Hence, $u_\infty$ has to vanish everywhere, which yields a contradiction.
\end{proof}

\subsection{The asymptotic expansion near the singular set}
Now we have proven \cref{prop:bounded-inverse}, we show its usefulness by applying it on the local expansion that is given in \cref{eq:introduction:expansion-donaldson}. We do this by comparing this expansion to the expansion into Fourier modes. Explicitly, let $u \in C^{\infty}(\hat{M})$ be a solution of \cref{eq:equation-for-z2-harmonic-1-form}. As we have seen in the proof of \cref{prop:bounded-inverse}, on a neck and boundary region near a connected component $\Sigma_i$ of $\Sigma$ the restriction of $u$ can be expanded in its Fourier modes as
\begin{equation}
    \label{eq:expansion-fourier-modes-2}
    u = \sum_{\substack{m \in \Z \\ n \text{ odd}}} u_{nm}(s, \omega, \Sigma_i) I_{nm}(r) e^{in \phi} e^{im \theta},
\end{equation}
where $I_{nm}(r)$ is the solution of \cref{eq:harmonic-function-on-boundary-and-neck} that vanishes at $r = 0$. That is, up to a scalar multiple and on the region where $\tilde{r} = r$, $I_{nm}(r)$ equals $r^{|n|/2}$ when $m = 0$ or it is the modified Bessel function $I_{|n|/2}(|m| \: r)$ of the first kind.
We normalise $I_{nm}(r)$, such that $I_{nm}(r)$ is positive and $I_{nm}(r) = r^{|n|/2} + \O(r^{(|n|+1)/2})$. Also, depending on the context, we also write $u_{nm} := u_{nm}(s, \omega, \Sigma_i)$ to simplify notation.

Comparing \cref{eq:expansion-fourier-modes-2} with the expansion in the $A(\theta)$ and $B(\theta)$, which is given in \cref{eq:introduction:expansion-donaldson}, we conclude that the function $A(\theta)$ depends on the $n=\pm 1$ Fourier modes and that the function $B(\theta)$ depends on the $n=\pm 3$ Fourier modes. Even more, the fact that $u$ is real-valued implies the condition $u_{nm} = \bar{u}_{-n, -m} $ and thus we can ignore the negative $n$-modes. By identifying $z^{n/2} = r^{n/2} e^{i n \phi}$, we conclude
\begin{align}
	\label{eq:relationship-different-expansions}
    A(\theta) = \sum_{m \in \Z} u_{1m} \: e^{im \theta} 
    \quad\text{and}\quad
    B(\theta) = \sum_{m \in \Z} u_{3m} \: e^{im \theta}.
\end{align}
Therefore, to study expansion in $A$ and $B$ functions, it is sufficient to study the $u_{1m}$ and $u_{3m}$ Fourier modes.

We estimate $u_{nm}$ by using \cref{prop:bounded-inverse}:
\begin{lemma}
	\label{lem:decay-rate-fourier-modes}
    Using the parametrization $(R_0, R_0 + s) \times T^2$ of a neck region, there exists a constant $C > 0$, independent of $s > 0$, $m \in \Z$ and $n$ odd, such that for all compactly supported on the interior, $\Z_2$-antisymmetric $\omega \in \Omega^1(\hat{M})$, we have
    $$
    |u_{nm}| \le e^{- \sqrt{\frac{n^2}{16} + m^2} s}  \frac{C}{I_{nm}(R_0)} \| \d ^* \omega \|_{D^{0, \alpha}}
    $$
    for all $m \in \Z$ and odd $n$.
\end{lemma}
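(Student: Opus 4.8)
The plan is to extract $u_{nm}$ from the value of $u$ at the end of the neck farthest from $\Sigma$, where the solution $I_{nm}$ of \cref{eq:harmonic-function-on-boundary-and-neck} has grown by the factor $e^{\sqrt{n^2/16+m^2}\,s}$, and to control that value by the uniform estimate of \cref{prop:bounded-inverse}.

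First I would apply \cref{prop:bounded-inverse} (with $k=0$) to the solution $u$ of \cref{eq:equation-for-z2-harmonic-1-form}. Since $\tilde r^2\Delta_g u = -\tilde r^2\d^*\omega$, this gives
$$
\|u\|_{C^0(\hat M)} \;\le\; \|u\|_{C^{2,\alpha}_{\mathrm{cf}}(\hat M)} \;\le\; C\,\|\tilde r^2\d^*\omega\|_{C^{0,\alpha}_{\mathrm{cf}}(\hat M)}.
$$
Because $\omega$ is supported in the interior region, where $\tilde r$ and the metric $g_s=g$ do not depend on $s$, the conformally rescaled Hölder norm of $\tilde r^2\d^*\omega$ is bounded by a fixed multiple of $\|\d^*\omega\|_{C^{0,\alpha}}$; hence $\|u\|_{C^0(\hat M)} \le C\|\d^*\omega\|_{C^{0,\alpha}}$ with $C$ independent of $s$, $n$, $m$. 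On the torus $\{r=R_0+s\}$ inside a neck, \cref{eq:expansion-fourier-modes-2} shows that $u_{nm}\,I_{nm}(R_0+s)$ is the $(n,m)$-Fourier coefficient of $u$, so applying the projection $\pi_{nm}$ (which, as in the proof of \cref{prop:bounded-inverse}, is bounded on $C^0$ with norm at most $1$) yields
$$
|u_{nm}|\,I_{nm}(R_0+s) \;=\; |\pi_{nm}(u|_{\{r=R_0+s\}})| \;\le\; \|u\|_{C^0(\hat M)} \;\le\; C\,\|\d^*\omega\|_{C^{0,\alpha}}.
$$
It therefore remains to prove the length-dependent lower bound $I_{nm}(R_0+s) \ge \tfrac12\,e^{\sqrt{n^2/16+m^2}\,s}\,I_{nm}(R_0)$.

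For that I would study \cref{eq:harmonic-function-on-boundary-and-neck} directly. Rewriting it as $(\tilde r\,I_{nm}')' = \tilde r\bigl(\tfrac{n^2}{4\tilde r^2}+m^2\bigr)I_{nm}$ and using the normalization $I_{nm}(r)=r^{|n|/2}+\O(r^{(|n|+1)/2})$ near $0$ (note $|n|/2\ge\tfrac12$ since $n$ is odd), a short maximum-principle argument shows that $\tilde r\,I_{nm}'\to 0$ as $r\to 0$ and that $I_{nm}$ is positive and non-decreasing on the boundary region; in particular $I_{nm}(R_0)>0$ and $I_{nm}'(R_0)\ge 0$. On the neck $\tilde r\equiv 2$, so with $\mu:=\sqrt{n^2/16+m^2}$ one has $I_{nm}(r)=A\,e^{\mu(r-R_0)}+B\,e^{-\mu(r-R_0)}$ for $r\in[R_0,R_0+s]$, where $A+B=I_{nm}(R_0)>0$ and $\mu(A-B)=I_{nm}'(R_0)\ge 0$; these force $A>0$ and $A\ge B$. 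Consequently
$$
I_{nm}(R_0+s)-\tfrac12\,e^{\mu s}\,I_{nm}(R_0) \;=\; \tfrac12\,(A-B)\,e^{\mu s}+B\,e^{-\mu s} \;\ge\; 0
$$
for every $s>0$: this is immediate when $B\ge 0$, and when $B<0$ it follows from $A>|B|$ by noting the right-hand side equals $\tfrac12 I_{nm}(R_0)>0$ at $s=0$ and is increasing in $s$. Combining the two displays gives $|u_{nm}| \le e^{-\sqrt{n^2/16+m^2}\,s}\,\dfrac{2C}{I_{nm}(R_0)}\,\|\d^*\omega\|_{C^{0,\alpha}}$, as claimed.

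The only real subtlety is the uniformity in $s$ of the $C^0$ bound on $u$, and this is precisely what \cref{prop:bounded-inverse} supplies; the compact support of $\omega$ in the $s$-independent interior region is what makes the passage from $\|\tilde r^2\d^*\omega\|_{C^{0,\alpha}_{\mathrm{cf}}}$ to $\|\d^*\omega\|_{C^{0,\alpha}}$ costless. The remaining ingredients — positivity and monotonicity of $I_{nm}$ on the boundary region and the exponential estimate on the neck — are elementary and manifestly uniform in $n$, $m$, and $s$.
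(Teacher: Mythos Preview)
Your proof is correct and follows essentially the same approach as the paper: both invoke \cref{prop:bounded-inverse} to bound $\|u\|_{C^0}$ uniformly in $s$, project onto the $(n,m)$-Fourier mode on the neck, and then prove the equivalent inequality $I_{nm}(R_0+s)\ge\tfrac12 e^{\mu s}I_{nm}(R_0)$ by writing $I_{nm}$ on the neck as a linear combination of $e^{\pm\mu(r-R_0)}$ and using positivity of $I_{nm}(R_0)$ and $I_{nm}'(R_0)$. The only cosmetic differences are your parametrization $(A,B)$ versus the paper's $(c_{nm},c_{nm}c'_{nm})$, and that you verify the final inequality by a monotonicity-in-$s$ argument whereas the paper bounds the ratio $\tfrac{I_{nm}(R_0)}{I_{nm}(R_0+s)}e^{\mu s}$ directly; you are also slightly more explicit than the paper about why the passage from $\|\tilde r^2\d^*\omega\|_{C^{0,\alpha}_{\mathrm{cf}}}$ to $\|\d^*\omega\|_{C^{0,\alpha}}$ is costless.
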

\begin{proof}
    Let $u$ be a solution of \cref{eq:equation-for-z2-harmonic-1-form}.
    According to \cref{prop:bounded-inverse}
    $$
    \|u\|_{D^{2, \alpha} (\overline{M})} \le C \|\d^* \omega\|_{D^{0, \alpha}(\overline{M})}
    $$
    which implies $\|u\|_{C^{0}((R_0, R_0 + s) \times T^2)} \le C \|\d^* \omega\|_{D^{0, \alpha}(\overline{M})}.$
    The map that projects $u$ to the Fourier mode $u_{nm} \cdot I_{nm}(r)$ is a bounded map 
    and so
    $$
    |u_{nm}| \cdot \sup_{r} I_{nm}(r) \le C \|\d^* \omega\|_{D^{0, \alpha}(\overline{M})}.
    $$
    Because $I_{nm}(r)$ is positive and strictly increasing due to the maximum principle, 
    $$
    |u_{nm}| \cdot I_{nm}(R_0 + s) \le C \|\d^* \omega\|_{D^{0, \alpha}(\overline{M})}.
    $$
    Let $\alpha = \sqrt{\frac{n^2}{16} + m^2}$.
    We claim that $\frac{I_{nm}(R_0)}{I_{nm}(R_0 + s)} e^{\alpha s} \le 2$. 
    If this is true, then
    \begin{align}
        |u_{nm}| \le& e^{\alpha s} \frac{I_{nm}(R_0)}{I_{nm}(R_0 + s)} \cdot e^{- \alpha s} \frac{C}{I_{nm}(R_0)}  \|\d^* \omega\|_{D^{0, \alpha}(\overline{M})} \\
    \le& e^{- \alpha s} \frac{2C}{I_{nm}(R_0)}  \|\d^* \omega\|_{D^{0, \alpha}(\overline{M})}.
    \end{align}
    which is what we need to show.

    To prove the claim, recall that $I_{nm}(r)$ solves \cref{eq:harmonic-function-on-boundary-and-neck}, and so on the neck $I_{nm}(r)$ is of the form
    \begin{equation}
        \label{eq:estimate-of-behavior_I_nm}
        I_{nm}(r) = c_{nm} \left(e^{\alpha(r-R_0)} + c'_{nm} e^{-\alpha(r-R_0)}\right).
    \end{equation}
    As $I_{nm}(r)$ cannot be bounded, $c_{nm} > 0$. Similarly, the positivity of $I_{nm}$ at $R_0$ implies $-1 < c'_{nm}$. Also the positivity of the derivative $\left. \frac{\del I_{nm}(r)}{\del r}\right|_{r=R_0} = \alpha c_{nm} \left(1 - c'_{nm}\right)$ implies $c'_{nm} \le 1$.

    With these constrains, we calculate $\frac{I_{nm}(R_0)}{I_{nm}(R_0 + s)} e^{\alpha s}$. The function in \cref{eq:estimate-of-behavior_I_nm} is chosen such that $\frac{I_{nm}(R_0)}{I_{nm}(R_0 + s)} e^{\alpha s}$ simplifies to
    $$
        \frac{I_{nm}(R_0)}{I_{nm}(R_0 + s)} e^{\alpha s} =
        \frac{1 + c'_{nm} }{1 + c'_{nm} e^{-2\alpha s}}
    $$
    Because $-1 < c'_{m,} \le 1$, the function $f(s) = \frac{1 + c'_{nm} }{1 + c'_{nm} e^{-2\alpha s}}$ side has no critical values. So it attains its maximum on the boundary, which in both cases is bounded by two.
\end{proof}

\section{The Nash--Moser Theorem}
Applying \cref{lem:decay-rate-fourier-modes} on the asymptotic expansion that is given in the $A$ and $B$ terms, we expect that the $A(\theta)$ term will exponentially decay when $s$ becomes arbitrary large. Next we show that when the neck is sufficiently long and the decay of the $A(\theta)$ term is sufficiently fast, then the $A(\theta)$ term can be set to zero by slightly perturbing the singular set $\Sigma$. For this we repeat the argument given by \mycite{Donaldson2021}. Namely, in his paper he showed that when one has a $\Z_2$-harmonic function and one slightly perturbs the Riemannian metric, one can always find a $\Z_2$-harmonic function by slightly perturbing the singular set. Due to the similarities of the problems, we start with his setup and make some small alterations.

Donaldson showed his deformation theorem using the Nash--Moser theorem. Explicitly, he used the implicit function theorem given by \mycite{Hamilton1982}. To explain this, we have to revisit the notion of Fr\'echet spaces and tame estimates.

Recall that \textit{a graded Fr\'echet space} is a vector space $\mathcal{F}$ equipped with an sequence of seminorms $\{\|\ldots\|_n\}_{n \in \N}$ such that 
$$
\|f\|_{0} \le \|f\|_{1} \le \|f\|_{2} \le \ldots
$$
It is a metric space using the translation-invariant metric $d(f,g) = \sum_{k=0}^\infty 2^{-k} \frac{\|f - g\|_k }{1 + \|f - g\|_k}$.
Canonical examples of Fr\'echet spaces are Sobolev and H\"older spaces. 

Next, let $\mathcal{F}$ and $\mathcal{G}$ be Fr\'echet spaces and let $\mathcal{U}$ be an open subset of $\mathcal{F}$.
We say that a map $\mathbf{P} \colon \mathcal{U} \to \mathcal{G}$ is a \textit{tame map} if there exists $r,b \in \N$ such that for every integer $n \ge b$ there exists a constant $C_n > 0$ such that for all $f \in U$
$$
\|\mathbf{P}(f)\|_n \le C_n (1 + \|f\|_{n+r}).
$$
If $P$ is a smooth map and all derivatives are tame, we then say that $P$ is a \textit{smooth tame map}. Partial differential operators are examples of smooth tame maps. 

In order to understand the work of \mycite{Donaldson2021}, we need to introduce the notion of invertibility with quadratic error. Namely, let $\mathcal{F}, \mathcal{G}$ and $\mathcal{H}$ be graded Fr\'echet spaces and let $\mathcal{U}$ be an open in a Fr\'echet space. Consider the smooth tame maps
\begin{align}
	\mathbf{A} \colon \mathcal{U} \to \mathcal{F}, \quad
	\mathbf{R} \colon \mathcal{U} \times \mathcal{G} \to \mathcal{H}, \quad\text{and}\quad
	\mathbf{S} \colon \mathcal{U} \times \mathcal{H} \to \mathcal{G},
\end{align}
where $\mathbf{R}$ and $\mathbf{S}$ are linear in the second component. We say \textit{$\mathbf{S}$ is an inverse of $\mathbf{R}$ with $\mathbf{A}$-quadratic error} if there exists smooth tame maps
\begin{align}
	\mathbf{Q}_1 \colon \mathcal{U} \times \mathcal{F} \times \mathcal{G} \to \mathcal{G},\text{ and }
	\mathbf{Q}_2 \colon \mathcal{U} \times \mathcal{F} \times \mathcal{H} \to \mathcal{H},
\end{align}
that are bilinear in the last two components, such that the compositions $\mathbf{S} \circ \mathbf{R} \colon \mathcal{U} \times \mathcal{G} \to \mathcal{G}$ and $\mathbf{R} \circ \mathbf{S} \colon \mathcal{U} \times \mathcal{H} \to \mathcal{H}$ satisfy
\begin{align}
	\mathbf{S} \circ \mathbf{R} (u, g) =& g + \mathbf{Q}_1(u,\mathbf{A}(u), g), \\
	\mathbf{R} \circ \mathbf{S} (u, h) =& h + \mathbf{Q}_2(u,\mathbf{A}(u), h).
\end{align}

In his paper, \mycite{Donaldson2021} equipped the space of Riemannian metrics $\mathcal{M}$ and the space of codimension-2 submanifolds $\mathcal{S}$ with graded Fr\'echet structures. In a neighbourhood $\mathcal{U} \subset \mathcal{M} \times \mathcal{S}$ of a $\Z_2$-harmonic function $u$, he showed that the functions $A$ and $B$ from the expansion \begin{equation}
	\label{eq:introduction:expansion-donaldson-z2-harmonic-function}
	u = \Re \left(
	A (\theta) e^{i \phi} r^{\frac{1}{2}} + B (\theta) e^{3 i \phi} r^{\frac{3}{2}} \right) + \O \left(r^{\frac{3}{2}}\right)
\end{equation}
are smooth tame maps from $\mathcal{U}$. 
He showed that when $B$ is nowhere vanishing, the derivative of $A$ with respect to the variation of $\Sigma$ is invertible with $A$-quadratic error and the inverse of this derivative is the pointwise inverse of $B$. He finished his deformation theory by citing the following theorem from \mycite{Hamilton1982}:

\begin{theorem}[Theorem III.3.3.1 and III.3.3.2 in \cite{Hamilton1982}]
	\label{thm:hamilton}
	Let $\mathcal{F}$, $\mathcal{G}$ and $\mathcal{H}$ be graded Fr\'echet spaces and let $\mathbf{E}$ be a smooth tame map defined on an open set $\mathcal{U}$ in $\mathcal{F} \times \mathcal{G}$ to $\mathcal{H}$. Suppose that whenever $\mathbf{E}(f_0,g_0) = 0$ there is an inverse of $D^\mathcal{G} \mathbf{E}$ with $\mathbf{E}$-quadratic error. Then there are neighbourhoods $B_{f_0}$ of $f_0$ and $B_{g_0}$ of $g_0$, such that for each $f \in B_{f_0}$, there is a unique $g \in B_{g_0}$ such that $\mathbf{E}(f,g) = 0$. Moreover, the map that sends $f \in B_{f_0}$ to $g \in B_{g_0}$ is a smooth tame map.
\end{theorem}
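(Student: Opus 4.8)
Since \cref{thm:hamilton} is quoted verbatim from Hamilton's monograph, the plan is not to reprove it from scratch but to record it and refer to \cite{Hamilton1982}; for orientation I sketch the Nash--Moser iteration that underlies it and explain why the hypothesis of an inverse with $\mathbf{E}$-quadratic error is exactly what makes the scheme close. Fix $f$ near $f_0$ and attempt to solve $\mathbf{E}(f,g)=0$ by a damped Newton iteration: starting from $g_0$, set $g_{n+1}=g_n-S_{t_n}\bigl(\mathbf{V}(f,g_n)\,\mathbf{E}(f,g_n)\bigr)$, where $\mathbf{V}$ denotes the approximate inverse of $D^{\mathcal{G}}\mathbf{E}$ supplied by the hypothesis and $S_{t_n}$ is a smoothing operator whose parameter $t_n$ is driven to infinity at a superexponential rate (say $t_{n+1}=t_n^{3/2}$). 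The smoothing is forced on us because $\mathbf{V}$ loses a fixed number $r$ of derivatives; without it the Newton residual at step $n+1$ would only be estimable $r$ derivatives below step $n$, and this loss would accumulate without bound.

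The core of the argument is an induction showing that the iterates stay in a fixed neighbourhood $\mathcal{U}$ and that, in a low-order seminorm $\|\cdot\|_{n_0}$, the residuals $\mathbf{E}(f,g_n)$ and the increments $g_{n+1}-g_n$ contract like $t_n^{-\mu}$ for some $\mu>0$, while a high-order seminorm $\|\cdot\|_{N}$ of the same quantities grows at most like a fixed power of $t_n$; the two bounds are interpolated against each other at every step. Two ingredients drive this: the tame estimates for $\mathbf{E}$, $D\mathbf{E}$, $\mathbf{V}$ and the error maps $\mathbf{Q}_1,\mathbf{Q}_2$, and the smoothing inequalities $\|S_t h\|_{a+b}\le C t^{b}\|h\|_{a}$ and $\|(I-S_t)h\|_{a}\le C t^{-b}\|h\|_{a+b}$. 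Taylor's theorem rewrites the new residual $\mathbf{E}(f,g_{n+1})$ as a sum of three pieces --- a genuinely quadratic term in $\mathbf{E}(f,g_n)$ from the second derivative of $\mathbf{E}$, the quadratic-error term $\mathbf{Q}_2(u_n,\mathbf{A}(u_n),\cdot)$ contributed by the fact that $\mathbf{V}$ is only an approximate inverse, and a smoothing-defect term involving $(I-S_{t_n})$ --- and each is quadratically small in the relevant norm. That the approximate-inverse error is itself quadratic is precisely the hypothesis ``inverse with $\mathbf{E}$-quadratic error'', and it is what lets the Newton convergence rate outrun the polynomial losses coming from smoothing.

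Once the induction is in place, $g_n$ is Cauchy in $\|\cdot\|_{n_0}$, and the uniform high-order bounds together with interpolation upgrade this to convergence in every seminorm; the limit $g=g(f)$ lies in a prescribed $B_{g_0}$ and satisfies $\mathbf{E}(f,g)=0$. Uniqueness in $B_{g_0}$ follows from a separate contraction estimate for the difference of two solutions, again using $\mathbf{V}$. Finally, tameness and smoothness of $f\mapsto g(f)$ come from differentiating the identity $\mathbf{E}(f,g(f))=0$ --- so that $Dg(f)$ is governed by $-\mathbf{V}(f,g(f))\circ D^{\mathcal{F}}\mathbf{E}(f,g(f))$ up to quadratic corrections --- and rerunning the same tame bookkeeping on the linearised equations, inductively for higher derivatives. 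The main obstacle throughout is the loss of derivatives: essentially every estimate must be carried out in a scale of norms simultaneously, and the entire delicacy lies in choosing the smoothing schedule $\{t_n\}$ and the pair of frozen indices $n_0<N$ so that the quadratic gain beats every polynomial loss. Since all of this is done in full in \cite{Hamilton1982}, the present paper simply quotes the statement.
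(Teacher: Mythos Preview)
Your proposal is correct and matches the paper's treatment: the paper does not prove \cref{thm:hamilton} at all but simply quotes it from \cite{Hamilton1982}, so there is nothing to compare against. Your orientation sketch of the Nash--Moser iteration is accurate and in fact goes well beyond what the paper provides, which is only the bare statement with a citation.
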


\subsection{The error map}
\label{subsec:error-map}
We also use \cref{thm:hamilton} for our proof of our main theorem.
%
%
Without changing too much from the work of Donaldson, we define the our error map $\mathbf{E}$ as follows:
 
Let $\mathcal{S}$ be the space of all smooth links inside $M$. (This is a smooth tame Fr\'echet manifold according to Corollary II.2.3.7 in \cite{Hamilton1982}.) Similarly, let $\operatorname{Diff}(M)$ be the tame Fr\'echet Lie group of diffeomorphisms of $M$ (See Theorem II.2.3.5 in \cite{Hamilton1982}).
Any link $\tilde \Sigma$ sufficiently close to $\Sigma$ can be identified by a section of the normal bundle of $\Sigma$. Let $\mathcal{U}_\Sigma \subset \mathcal{S}$ be a neighbourhood of $\Sigma$ where this is possible. We also assume that any $\tilde \Sigma \in \mathcal{U}_\Sigma$ is fully contained in the boundary regions.
In Proposition 4.2 of \cite{Donaldson2021}, Donaldson showed that $\mathcal{U}_\Sigma$ can be chosen such that there is an explicit smooth tame map\footnote{The diffeomorphism $\psi$ also identifies the normal structures (Definition 3.1 in \cite{Donaldson2021}) of the links.} 
$$
\mathcal{U}_{\Sigma} \to \operatorname{Diff}(M), \qquad \tilde{\Sigma} \mapsto \psi_{\tilde \Sigma},
$$
with the property $\psi_{\tilde{\Sigma}}(\tilde \Sigma) = \Sigma$. From now on we denote the image of this map as $\psi_{\tilde\Sigma}$ or $\psi$ if the context is clear. 

Given the diffeomorphism $\psi$, we can consider the pullback metric $\psi^* g_s$ in the space of Riemannian metrics $\mathcal{M}$. On $\mathcal{M}$ we consider the standard $C^{k, \alpha}$ norms in order to make it a Fr\'echet space.

Independently, we can fix a $[\sigma] \in H^1_-(\hat{M})$ and choose a representative $\sigma$ that is compactly supported on the interior region. With respect to the metric $\psi^* g_s$, there is an $L^2$-bounded $\Z_2$-harmonic 1-form $(\Sigma, \mathcal{I}, \tilde \omega)$, such that $\tilde \omega$ is a representative of the rescaled cohomology class $ e^{\frac{3}{4}s} [\sigma] \in H^1_-(\hat{M})$. This rescaling is chosen such that the $B$ term in the asymptotic expansion of \cref{eq:introduction:expansion-donaldson} will be bounded uniformly with respect to the length of the necks.
According to Proposition 3.4 in \cite{Donaldson2021}, there is a weight function $W \in C^{\infty}(M)$ so that we can write $\tilde \omega = \d \:(W \cdot \tilde u) + e^{\frac{3}{4}s} \sigma$, where $\tilde u \in E^{\infty, \alpha}(\overline M)$.

Next, we consider the asymptotic expansion from \cref{eq:introduction:expansion-donaldson} for the function $\tilde u$. According to Donaldson \cite{Donaldson2021}, the $A$ term in this asymptotic expansion is determined by a bounded map
\begin{equation}
	\label{eq:A-map-donalson}
\mathcal{A}\colon E^{k+2, \alpha}(\overline{M}) \to C^{k+1, \alpha + \frac{1}{2}}(\Sigma, \C),
\end{equation}
where we used the standard H\"older norm $C^{k+1, \alpha + \frac{1}{2}}(\Sigma, \C)$. We denote $\tilde A := \mathcal{A}(\tilde u)$.
With this in mind, we define $\mathbf{E}$ by the composition shown in Figure \ref{fig:definition-error-map}. In short, we define $\mathbf{E}(a, \tilde \Sigma) := \tilde A - a$.

\begin{figure}
	$$
	\xymatrix{
		(a, \tilde \Sigma) \ar@{|->}[d] & C^{\infty}(\Sigma, \C) \times \mathcal{U}_{\tilde \Sigma} \ar[d] 
		\ar `r[dr]
		`d[ddddr]
		[dddd]
		\\
		(a, \psi) \ar@{|->}[d] & C^{\infty}(\Sigma, \C) \times  \operatorname{Diff}(M) \ar[d] & \\ 
		(a, \psi^* g_s)  \ar@{|->}[d] &  C^{\infty}(\Sigma, \C) \times \mathcal{M}  \ar[d] & \qquad \mathbf{E}\\
		(a, \tilde u) \ar@{|->}[d] & C^{\infty}(\Sigma, \C) \times  E^{\infty, \alpha}(\overline{M}) \ar[d] & \\
		\tilde A - a & C^{\infty}(\Sigma, \C) & 
	}
	$$
	\caption{Definition of the error map $\mathbf{E}$.}
	\label{fig:definition-error-map}
\end{figure}

Let $\omega = \d u + \sigma$ be the $L^2$-bounded $\Z_2$-harmonic 1-form corresponding to the unperturbed link $\Sigma$ and unperturbed metric $g_s$. Also, let $\mathcal{A}(u) = A$. By construction, $\mathbf{E}(A, \Sigma) = 0$. Our goal is to show that when $A$ is sufficiently small and $B$ is nowhere vanishing, then \cref{thm:hamilton} can be applied on $\mathbf{E}$. This theorem will give a neighbourhood around $(A, \Sigma)$ on which $\mathbf{E} = 0$. The size of this neighbourhood is determined by the constants of the tame estimates. If these constants can be chosen uniformly with respect to the length $s$ of the necks and $A$ is sufficiently small, then for some perturbation $\tilde \Sigma$ of $\Sigma$, the element $(0, \tilde{\Sigma})$ will be part of this neighbourhood. If this is the case, $\mathbf{E}(0, \tilde \Sigma) = \tilde A = 0$ and so $(\tilde{\Sigma}, \mathcal{I}, \tilde \omega)$ is a $\Z_2$-harmonic 1-form on $(M, g_s)$.

A quick glance at \cref{lem:decay-rate-fourier-modes} suggests that \cref{thm:hamilton} cannot be applied in the general setting. Namely, in \cref{eq:relationship-different-expansions} we related the asymptotic expansions given in \cref{eq:expansion-fourier-modes-2} and \cref{eq:introduction:expansion-donaldson}. Using the decay estimate from \cref{lem:decay-rate-fourier-modes}, we conclude that $A(\theta)$ and $B(\theta)$ is dominated by the $u_{10}$ and $u_{30}$ Fourier mode respectively. For these Fourier modes we know that $|u_{n0}| \le C e^{-\frac{n}{4}s}$, where $C$ depends on the cohomology class $\sigma \in H^1_-(\hat{M})$. The rescaling of this cohomology class in the definition of $\mathbf{E}$ is chosen such that $B(\theta) = \O(1)$. However, this has the consequence that $A(\theta) = \O(e^{\frac{1}{2}s})$, which does not decay.

To combat these issues we will make the following assumptions. We assume that for any $s \gg 0$, there is a $\sigma_s \in H^1_-(\hat{M})$, such that with respect to the unperturbed metric $g_s$,
\begin{enumerate}[label=\textbf{A\arabic*},ref=A\arabic*]
	\item \label{asm:1} there is a $\Z_2$-antisymmetric representative $\omega_s$ of $\sigma_s$ that is compactly supported on the interior region and $\| \d^* \omega_s \|_{D^{k, \alpha}(\overline{M})}$ is bounded above and below away from zero, uniform in the parameter $s$,
	\item \label{asm:2} for each connected component $\Sigma_i$ of $\Sigma$, the Fourier mode $u_{10}(s, \omega_s, \Sigma_i)$ from the expansion in \cref{eq:expansion-fourier-modes-2} is zero, and 
	\item \label{asm:3} for each connected component $\Sigma_i$ of $\Sigma$, the norm of the Fourier mode $u_{30}(s, \omega_s, \Sigma_i)$ is bounded below, uniformly in $s$, away from zero.
\end{enumerate}
We will use this family of cohomology classes in our definition of $\mathbf{E}$.
In \cref{sec:non-decaying-modes} we show that these assumptions can always be satisfied under the conditions of \cref{thm:main-theorem}. In essence, Assumption \ref{asm:1} can be viewed as a global normalization of the cohomology classes, Assumption \ref{asm:2} will be used to control the growth rate of $A(\theta)$ and Assumption \ref{asm:3} will assure that $B(\theta)$ is nowhere-vanishing.

Using these assumptions, we get better decay rates for the unperturbed singular set $\Sigma$.
\begin{lemma}
	\label{lem:Nash-Moser:better-estimates}
	Consider  the $L^2$-bounded $\Z_2$-harmonic 1-form for the cohomology class $e^{\frac{3}{4} s} \sigma_s \in H^1_-(\hat{M})$ and the unperturbed singular set $\Sigma$. For this $L^2$-bounded $\Z_2$-harmonic 1-form, consider $A(\theta)$ and $B(\theta)$ from the asymptotic expansion of \cref{eq:introduction:expansion-donaldson}. With respect to $C^{k, \alpha}(\Sigma, \C)$, the map $A(\theta)$ is decaying with order $e^{(\frac{3}{4} - \sqrt{\frac{17}{16}})s}$, while $B(\theta)$ converges to a non-zero constant with rate $e^{-\frac{1}{2}s}$.
\end{lemma}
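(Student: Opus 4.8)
The plan is to convert the decay estimates of \cref{lem:decay-rate-fourier-modes} into statements about the graded Fréchet norms of $A(\theta)$ and $B(\theta)$, using the identification of $A$ and $B$ with the $n=\pm 1$ and $n=\pm 3$ Fourier modes from \cref{eq:relationship-different-expansions}. First I would recall that the cohomology class under consideration is $e^{\frac{3}{4}s}\sigma_s$, so that the $\Z_2$-harmonic 1-form in question is $e^{\frac{3}{4}s}$ times the one associated to $\sigma_s$; hence its Fourier coefficients are $e^{\frac{3}{4}s}u_{nm}(s,\omega_s,\Sigma_i)$. By Assumption \ref{asm:1} the quantity $\|\d^*\omega_s\|_{C^{0,\alpha}_{\mathrm{cf}}}$ is uniformly bounded above and below, and by Assumption \ref{asm:2} the $u_{10}$ mode vanishes, so the dominant contribution to $A(\theta)=\sum_m u_{1m}e^{im\theta}$ comes from $m=\pm 1$. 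Plugging $n=1$, $m=1$ into \cref{lem:decay-rate-fourier-modes} gives $|u_{1,\pm 1}|\le C e^{-\sqrt{1/16+1}\,s}=Ce^{-\sqrt{17/16}\,s}$; multiplying by the rescaling factor $e^{\frac{3}{4}s}$ yields a bound of order $e^{(\frac{3}{4}-\sqrt{17/16})s}$ for the dominant mode of $A$. For the remaining modes $|m|\ge 2$ the exponent $-\sqrt{n^2/16+m^2}s$ is strictly more negative, so after multiplication by $e^{\frac{3}{4}s}$ they are dominated by the $m=\pm 1$ term; summing a geometric-type series over $m$ (the Sobolev/Hölder weights on $\Sigma$ only cost polynomial factors in $m$, absorbed into the exponential gap) gives that every Fréchet seminorm of $A(\theta)$ is bounded by a constant times $e^{(\frac{3}{4}-\sqrt{17/16})s}$.

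For $B(\theta)=\sum_m u_{3m}e^{im\theta}$ I would split off the $m=0$ mode. By Assumption \ref{asm:3}, $|u_{30}(s,\omega_s,\Sigma_i)|$ is bounded below away from zero uniformly in $s$; the normalisation $I_{30}(r)=r^{3/2}+O(r^2)$ together with the factor $e^{\frac{3}{4}s}$ and the bound $|u_{30}|\le Ce^{-\frac{3}{4}s}/I_{30}(R_0)$ from \cref{lem:decay-rate-fourier-modes} (here $n=3$, $m=0$, so $\sqrt{n^2/16+m^2}=3/4$) shows the rescaled coefficient $e^{\frac{3}{4}s}u_{30}$ is bounded above and below — that is, it converges (along subsequences, or after refining the choice of $\sigma_s$) to a non-zero constant. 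The nonzero real constant it multiplies is $\Re(z^{3/2})/r^{3/2}$-type, i.e. a genuine nonvanishing function on $\Sigma$ in the chosen trivialisation, so the $m=0$ part of $B$ is a nonzero constant. For the higher modes $m\neq 0$, \cref{lem:decay-rate-fourier-modes} gives $|u_{3m}|\le Ce^{-\sqrt{9/16+m^2}s}$; multiplying by $e^{\frac{3}{4}s}$ leaves the exponent $\frac{3}{4}-\sqrt{9/16+m^2}$, which for $|m|\ge 1$ is at most $\frac{3}{4}-\sqrt{9/16+1}=\frac{3}{4}-\sqrt{25/16}=\frac{3}{4}-\frac{5}{4}=-\frac{1}{2}$. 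Hence $B(\theta)$ minus its constant $m=0$ part decays at least like $e^{-\frac{1}{2}s}$ in every Fréchet seminorm, which is the claimed rate of convergence to a non-zero constant.

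The only subtlety, and the step I expect to require the most care, is the uniformity of the constant $C$ in \cref{lem:decay-rate-fourier-modes} across all Fourier modes simultaneously when one sums over $m$ to recover the Hölder (Fréchet) norm of the functions $A,B$ on $\Sigma$: one must check that the polynomial-in-$m$ losses incurred by passing from individual Fourier coefficients to the $C^{k,\alpha}$-norm on the circle $\Sigma$ (i.e. differentiating in $\theta$ brings down factors of $m^k$) are comfortably beaten by the uniform exponential gap between consecutive values of $\sqrt{n^2/16+m^2}$, so that the series converges with the same exponential rate as its leading term. This is routine but should be spelled out. The rest is bookkeeping: assembling the $n=1$ estimates into the statement about $A$ and the $n=3$ estimates into the statement about $B$, and noting that $e^{\frac34 s}u_{30}$ being bounded above and below makes the limiting constant nonzero.
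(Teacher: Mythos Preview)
Your approach is correct and essentially the same as the paper's: both use Assumptions \ref{asm:1}--\ref{asm:3} together with \cref{lem:decay-rate-fourier-modes} and the identification \cref{eq:relationship-different-expansions} to control the $n=1$ and $n=3$ Fourier modes separately, isolating the $m=0$ term and summing the rest. The only technical difference is in the last step, where you propose to recover the H\"older norms of $A$ and $B$ on $\Sigma$ by summing the Fourier coefficients with polynomial weights in $m$; the paper instead bounds the harmonic function $u_1:=\sum_m u_{1m}I_{1m}(r)e^{i\phi}e^{im\theta}$ at $r=R_0$ (where the factors $I_{1m}(R_0)$ cancel cleanly against those in \cref{lem:decay-rate-fourier-modes}), then invokes the maximum principle and elliptic regularity on the boundary region to upgrade to $C^{k,\alpha}_{\mathrm{cf}}$, and finally quotes Donaldson's observation that this controls the $C^{k-1,\alpha+\frac12}$-norm of $A$. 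The paper's route avoids having to track the $m$-dependence of $I_{nm}(R_0)$ that your direct summation implicitly requires, but your version works as well once that dependence is checked.
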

\begin{proof}
	
	The argument for the $A(\theta)$ and $B(\theta)$ terms are identical, and so we only focus on the former one.
	Let $\omega_s$ be a $\Z_2$-antisymmetric representative that is given in Assumption \ref{asm:1} and let $u$ be the solution of
	$$
	\Delta_{g_s} u = - e^{\frac{3}{4}s} \d^* \omega_s.
	$$
	On the boundary region near a connected component of $\Sigma$, we can decompose $u$ into its Fourier modes 
	\begin{equation}
		u = \sum_{\substack{m \in \Z \\ n \text{ odd}}} u_{nm} I_{nm}(r) e^{in \phi} e^{im \theta}
	\end{equation}
	as it is done in \cref{eq:expansion-fourier-modes-2}.
	We have seen in \cref{eq:relationship-different-expansions}, that for the $A(\theta)$ term we are only interested in the $u_{1m}$ modes, and therefore we consider the harmonic function
	$$
	u_1 = \sum_{\substack{m \in \Z}} u_{1m} I_{1m}(r) e^{in \phi} e^{im \theta},
	$$
	which is only defined on a boundary region. By Assumption \ref{asm:2}, $u_{10} = 0$ and so \cref{lem:decay-rate-fourier-modes} implies
	$$
	|u_1(R_0)| \le \sum_{m \not = 0} |u_{1m}| \cdot |I_{1m}(R_0)| \le C \|\d^*\omega_s \|_{C^{0, \alpha}} \sum_{m \not = 0} e^{(\frac{3}{4} - \sqrt{\frac{1}{16} + m^2})s}.
	$$
	By the ratio test, this infinite sum converges and by Assumption \ref{asm:1} we conclude $u_1(R_0) = \O\left(e^{(\frac{3}{4} - \sqrt{\frac{17}{16}})s}\right)$.
	By the maximum principle, the $C^0$-norm of $u_1$ on the boundary region has the same decay rate. 

	To extend this $C^0$ estimate to an $C^{k+1, \alpha + \frac{1}{2}}(\Sigma, \C)$ estimate, we use the following trick: Let $\chi$ be a smooth step function that is supported on the boundary regions and equals $1$ near $\Sigma$.
	For the asymptotic expansion it does not matter if we consider $u_1$ or $\chi \cdot u_1$. Using a local elliptic regularity estimate we get 
	$
	\|u_1 \|_{D^{k+2, \alpha}(\operatorname{Supp}(\d \chi))} = \O\left(e^{(\frac{3}{4} - \sqrt{\frac{17}{16}})s}\right)
	$ and so by expanding $\Delta_{g_s}(\chi \cdot u_1)$,
	$$
	\| \chi \cdot u_1 \|_{E^{k+2, \alpha}} 
	= \|\Delta_{g_s} (\chi \cdot u) \|_{D^{k, \alpha}}
	\le C \|\chi \cdot u \|_{D^{k+2, \alpha}(\operatorname{Supp}(\d \chi))} = \O\left(e^{(\frac{3}{4} - \sqrt{\frac{17}{16}})s}\right).
	$$
	In the last paragraph of Section 2 in \cite{Donaldson2021}, Donaldson explained that estimates of the $E^{k+2, \alpha}$-norm on the boundary region yields $C^{k+1, \alpha + \frac{1}{2}}$ estimates on $A(\theta)$. This concludes our result.	
\end{proof}

With these improved estimates for $A(\theta)$ and $B(\theta)$, we can study the tame properties of the map $\mathbf{E}$. Most of the work is already done by Donaldson and we only need to check if our modifications will change the results significantly.

\begin{lemma}
	\label{lem:Nash-Moser:tameness-E}
	The map $\mathbf{E}\colon \mathcal{U} \subset C^\infty(\Sigma, \C)\times S \to C^\infty(\Sigma, \C)$ is a smooth tame map and the tame estimates of $\mathbf{E}$ and its derivatives are uniform with respect to the length $s$ of the necks.
\end{lemma}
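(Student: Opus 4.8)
The plan is to reduce the tameness of $\mathbf{E}(a,\tilde\Sigma) = \tilde A - a$ to Donaldson's existing tameness results, tracking exactly how the length parameter $s$ enters. The term $-a$ is the identity map on $C^\infty(\Sigma,\mathbb{C})$ composed with a projection, which is trivially smooth tame with $s$-independent constants, so the whole content is the map $\tilde\Sigma \mapsto \tilde A$. First I would recall Donaldson's setup: for a link $\tilde\Sigma$ close to $\Sigma$ there is a diffeomorphism $\psi$ of $M$ carrying $\tilde\Sigma$ to $\Sigma$ and its normal structure to that of $\Sigma$, and Donaldson shows (Section 4 of \cite{Donaldson2021}) that the assignment sending $\psi^*g_s$ to the pair $(\tilde A,\tilde B)$ from the expansion \cref{eq:introduction:expansion-donaldson} is a composition of smooth tame maps: solving the Poisson equation \cref{eq:equation-for-z2-harmonic-1-form} via the bounded inverse of $\Delta$, restricting to a boundary region, extracting Fourier modes, and reading off the $z^{1/2}$ coefficient. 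I would observe that each of these constituent maps is smooth tame — elliptic solution operators and partial differential operators are the standard examples — so the composite $\tilde\Sigma\mapsto\tilde A$ is smooth tame, and $\mathbf{E}$ is as well.

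The real work is the uniformity in $s$. Here the key point is that all the $s$-dependence of the constituent maps has already been controlled in the preceding sections. The solution operator $\Delta_{g_s}^{-1}$ has an operator bound independent of $s$ by \cref{prop:bounded-inverse}; the Schauder estimates feeding into this have $s$-independent constants by \cref{lem:Schauder-estimate-Holder}; the Fourier projections $\pi_{nm}$ are bounded on $C^0$ of a neck with $s$-independent norm; and the conformally rescaled metric $g_{\mathrm{cf}}$ has uniformly bounded geometry regardless of $s$. The only genuinely $s$-sensitive quantities are the growth rates of $\tilde A$ and $\tilde B$ themselves, but these have been tamed precisely by the rescaling of the cohomology class to $e^{\frac34 s}\sigma_s$ together with Assumptions \ref{asm:1}--\ref{asm:3}: by \cref{lem:Nash-Moser:better-estimates}, $A(\theta)$ decays like $e^{(\frac34-\sqrt{17/16})s}$ and $B(\theta)$ stays bounded away from zero, both uniformly. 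So I would argue that, after the rescaling, the neighbourhood $\mathcal{U}$ on which the tame estimates hold, and the constants $C_n$ in those estimates, can be taken independent of $s$ for $s$ large — essentially because the edge-calculus norms Donaldson uses are, on the boundary region where $\tilde r = r$, identical to ours and unaffected by the length of the neck, while on the neck region the geometry is fixed flat cylinder.

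The main obstacle I anticipate is bookkeeping rather than any new idea: one must verify that Donaldson's tame estimates, which were stated for a \emph{fixed} metric, survive verbatim when the metric varies in the family $g_s$, and in particular that the neighbourhood $\mathcal{U}$ does not shrink to nothing as $s\to\infty$. The danger is that the constants in Donaldson's Proposition 4.2 (the construction of $\psi$) or in the tame estimates for the expansion coefficients could secretly degenerate in $s$. I would handle this by noting that $\psi$ is supported near $\Sigma$, i.e. on the boundary region where $g_s$ is \emph{independent} of $s$ (it equals the fixed edge metric there), so the construction of $\psi$ and all estimates localized near $\Sigma$ are automatically $s$-uniform; the neck and interior are untouched by $\psi$. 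Combining this locality observation with \cref{prop:bounded-inverse} for the global solve step closes the argument. I would also invoke the Remark after \cref{eq:conformally-rescaled-metric}: since $\Sigma$ has vanishing mean curvature, no weights are needed, which keeps the functional-analytic framework clean and $s$-independent.
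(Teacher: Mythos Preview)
Your outline captures the overall architecture correctly: reduce to $\tilde\Sigma\mapsto\tilde A$, invoke Donaldson's tameness for the constituent maps, and argue the $s$-dependence enters only through quantities already controlled (the inverse from \cref{prop:bounded-inverse}, the locality of $\psi$ on the boundary region). However, there is a genuine gap.

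The tame constants in Donaldson's Proposition~4.1 depend on the norm of the source in the Poisson equation, and here the source is $e^{\frac34 s}\,\d^*\omega_s$. By Assumption~\ref{asm:1} this has $C^{k,\alpha}_{\mathrm{cf}}$-norm of order $e^{\frac34 s}$, which \emph{blows up} as $s\to\infty$. You write that the rescaling ``tames'' the growth of $\tilde A$ and $\tilde B$, but \cref{lem:Nash-Moser:better-estimates} only controls these at the \emph{unperturbed} $\Sigma$; it says nothing about the solution operator $\tilde\Sigma\mapsto\tilde u$ for general $\tilde\Sigma$, whose naive bound via \cref{prop:bounded-inverse} is $O(e^{\frac34 s})$. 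The paper confronts this head-on: it modifies the differential equation (without changing $\tilde A$) by subtracting off $\chi\cdot u_s$, where $u_s$ solves the unperturbed problem and $\chi$ is a cutoff equal to $1$ outside the boundary region and vanishing where $\psi^*g_s\neq g_s$. The new source becomes $2\langle\d u_s,\d\chi\rangle_{g_s}-u_s\,\Delta_{g_s}\chi$, which is supported where $\d\chi\neq 0$ (in the boundary region) and is uniformly bounded there by the Fourier-mode decay established in the proof of \cref{lem:Nash-Moser:better-estimates}. This subtraction trick is the missing idea in your proposal.

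A smaller point: you dismiss the weight function via the vanishing-mean-curvature remark, but that remark applies only to the unperturbed $\Sigma$. For perturbed $\tilde\Sigma$ the mean curvature is nonzero and Donaldson's weight $W$ does appear; the paper handles this by arguing $W$ can be chosen equal to $1$ outside the boundary region, so $W^{-1}\Delta_{\psi^*g_s}(W\,\cdot\,)-\Delta_{\psi^*g_s}$ is supported there and hence $s$-independent. Your argument needs this step as well.
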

\begin{proof}
	
	First, recall the notation in the construction of the function $\mathbf{E}$ as given in Figure \ref{fig:definition-error-map}.
	According to the proof Proposition 4.1 in \cite{Donaldson2021}, the map that sends $\psi^* g_s$ to $\tilde u$ is a smooth tame map. This implies that $\mathbf{E}$ is a composition of smooth tame maps and thus it is a smooth tame map.

	A careful reading of this proposition by Donaldson shows that the tame estimates for (the derivatives of) $\mathbf{E}$ depend on three functions and one operator. In the rest of this proof we will show that these can be bounded uniformly in the parameter $s$. 
	
	We start with the three obvious items. First, the tame estimates of $\mathbf{E}$ will depend on the operator norm of $\Delta^{-1}_{g_s}$ with respect to the model metric $g_s$. We have proven this in \cref{prop:bounded-inverse}.

	Secondly, the tame estimates of $\mathbf{E}$ will depend on the difference $g_s - \psi^* g_s \in \Gamma(\operatorname{Sym}^2 T^*M)$. In Proposition 4.2 in \cite{Donaldson2021} this diffeomorphism $\psi$ is explicitly constructed. In this construction $\psi$ only depends on the embedding of $\Sigma$ in the boundary region and $\psi$ can always be chosen to be the identity map on the necks and interior regions. Therefore, $g_s - \psi^* g_s$ is supported in the boundary regions, and we conclude that estimates on $g_s - \psi^* g_s$ will be uniform with respect to the parameter $s$.

	Thirdly, to make his analysis work, Donaldson had to define a weight function $W \in C^\infty(M)$ and he had to study $\tilde\Delta := W^{-1} \Delta_{\psi^* g_s} W$ instead of $\Delta_{\psi^* g_s}$. As explained in the beginning of Section 3 of his paper, this weight function only depends on the embedding of $\tilde \Sigma$ inside $M$ and can always be set to the constant 1 on the neck and interior regions. This way $W$ can be chosen independent of the length $s$.	
	
	Finally, the tame estimates of $\mathbf{E}$ will depend on $e^{\frac{3}{4}s} \d^* \omega_s$, because we need to solve $W^{-1} \Delta_{\psi^* g_s} (W \: \tilde u_s) = - e^{\frac{3}{4}s} \d^* \omega_s$. Sadly, due to Assumption \ref{asm:1}, these estimates blow up when the length of the necks gets arbitrarily large. To combat this, we use the same trick as in \cref{lem:Nash-Moser:better-estimates}. Namely, we change the differential equation, without changing $\mathbf{E}$. As we are only interested in the asymptotic expansion of $\tilde u_s$, we can change $\tilde u_s$ with any compactly supported function. Therefore, let $u_s$ be the solution of $\Delta_{g_s} u_s = - e^{\frac{3}{4}s} \d^* \omega_s$ and let $\chi$ be a step function that equals 1 outside the boundary region and that vanishes near $\Sigma$. Moreover, assume that $\chi$ vanishes when $\psi^* g_s \not = g_s$ and $W \not = 1$. We consider the alternative differential equation
	$$
	W^{-1} \Delta_{\psi^* g_s} (W (\tilde u_s + \chi \cdot u_s)) = e^{\frac{3}{4}s} \d^* \omega_s.
	$$
	Using the properties of $\chi$ and $u_s$, this differential equation simplifies to 
	\begin{align}
		W^{-1} \Delta_{\psi^* g_s} (W \tilde u_s ) 
		=& e^{\frac{3}{4}s} \d^* \omega_s - \Delta_{g_s} (\chi \cdot u_s) \\
		=& e^{\frac{3}{4}s} \d^* \omega_s - \chi \Delta_{g_s} u_s - u_s \Delta_{g_s} \chi + 2 \langle \d u_s, \d \chi \rangle_{g_s} \\
		=& 2 \langle \d u_s, \d \chi \rangle_{g_s} - u_s \Delta_{g_s} \chi.
	\end{align}
	As seen in the proof of \cref{lem:Nash-Moser:better-estimates}, the function $u_s$ is uniformly bounded on the support of $\d \chi$ and this yields us the improved tame estimates.
\end{proof}
\begin{remark}
	\label{rem:estimates-of-B}
	Because Donaldson actually showed in Proposition 4.1 in \cite{Donaldson2021} that the inverse of $\Delta_{\psi^* g_s}$ is a smooth tame map, we can use the same argument to show that any bounded linear map that depends on this inverse will be a uniform smooth tame map. An example of this will be the map $\tilde \Sigma \mapsto \tilde B$, where $\tilde B$ comes from the asymptotic expansion given in \cref{eq:introduction:expansion-donaldson}.
\end{remark}

\subsection{The derivative of the error map}
In order to apply the Nash--Moser theorem from \cref{thm:hamilton}, we have to calculate the derivative of $\mathbf{E}$ with respect to the variation of $\Sigma$. We claim that our derivative will be very similar to the derivative found by \mycite{Donaldson2021}. To explain this we have to revisit his argument. 

For a link $\tilde \Sigma$ close to $\Sigma$ and for a section $\nu$ in the normal bundle of $\tilde \Sigma$, Donaldson considered a smooth family $(\tilde \Sigma_t, \mathcal{I}, u_t)$ of $L^2$-bounded $\Z_2$-harmonic functions, where $\tilde \Sigma_0 = \tilde \Sigma$  and the gradient of $\tilde \Sigma_t$ at zero is given by $\nu$. Using \cref{eq:introduction:expansion-donaldson-z2-harmonic-function} he retrieved the asymptotic expansion in the functions $\tilde{A}_t(\theta)$ and $\tilde{B}_t(\theta)$. By taking the $t$-derivative on $\Delta_{\psi_t^* g} u_t = 0$, he showed in Proposition 5.4 of \cite{Donaldson2021} that $\frac{\delta \tilde A}{\delta \tilde \Sigma} (\tilde \Sigma, \nu)$ is equal to $\tilde B_0(\theta) \cdot \nu$ up to $\tilde A$-quadratic error. 

For our case, we consider a link $\tilde \Sigma$ close to $\Sigma$, a section $\nu$ in the normal bundle of $\tilde \Sigma$ and a family $\tilde \Sigma_t$ with the properties above. We also consider a smooth family of $L^2$-bounded $\Z_2$-antisymmetric functions $u_t$ such that 
\begin{equation}
	\label{eq:Nash-Moser:variation-1-forms}
	\Delta_{\psi_t^* g_s} u_t = - e^{\frac{3}{4}s} \d^* \omega_s,
\end{equation}
where $\omega_s$ is given in Assumption \ref{asm:1}. The right hand side of this equation does not depend on $t$, and so $\left. \frac{\del}{\del t} \right|_{t=0} \Delta_{\psi_t^* g_s} u_t = 0$.
Therefore, the proof of Proposition 5.4 in \cite{Donaldson2021}, can be copied without any modification and so we have the following
\begin{lemma}
	\label{lem:nash-moser:derivative-E-with-A-quadratic-error}
	Let $a$ be a complex-valued function on $\Sigma$.
	Consider the $L^2$-bounded $\Z_2$-harmonic 1-form for the cohomology class $e^{\frac{3}{4} s} \sigma_s \in H^1_-(\hat{M})$ and the perturbed singular set $\tilde \Sigma$. For this $L^2$-bounded $\Z_2$-harmonic 1-form, consider $\tilde A(\theta)$ and $\tilde B(\theta)$ from the asymptotic expansion of \cref{eq:introduction:expansion-donaldson}. 
	Up to $\tilde A$-quadratic error,
	$$
	\frac{\delta \mathbf{E}}{\delta \tilde \Sigma} (a, \tilde \Sigma, \nu) = \tilde B(\theta) \cdot \nu,
	$$
	and this $\tilde A$-quadratic error is the same quadratic error found in Proposition 5.4 in \cite{Donaldson2021}.
\end{lemma}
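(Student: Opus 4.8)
The plan is to reduce the statement to Proposition 5.4 in \cite{Donaldson2021}, exploiting the fact that our defining equation differs from Donaldson's only by a term that is annihilated upon differentiating in the deformation parameter.

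First I would unwind the definition of $\mathbf{E}$. Since $\mathbf{E}(a,\tilde\Sigma) = \tilde A(\theta) - a$ and the function $a$ is held fixed while $\tilde\Sigma$ varies, the directional derivative satisfies
$$
\frac{\delta\mathbf{E}}{\delta\tilde\Sigma}(a,\tilde\Sigma,\nu) = \frac{\delta\tilde A}{\delta\tilde\Sigma}(\tilde\Sigma,\nu),
$$
so the $a$-dependence drops out and it suffices to compute the variation of $\tilde A$. I would then set up the deformation exactly as in \cite{Donaldson2021}: choose a family of links $\tilde\Sigma_t$ with $\tilde\Sigma_0 = \tilde\Sigma$ and normal velocity $\nu$ at $t=0$, the associated diffeomorphisms $\psi_t$ of $M$ supplied by Proposition 4.2 in \cite{Donaldson2021}, and the smooth family $u_t$ of $\Z_2$-antisymmetric $L^2$-bounded functions solving \cref{eq:Nash-Moser:variation-1-forms}.

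The key observation is that the right-hand side $-e^{\frac34 s}\d^*\omega_s$ of \cref{eq:Nash-Moser:variation-1-forms} is independent of $t$, being built from the fixed representative $\omega_s$ of Assumption \ref{asm:1}, supported on the fixed interior region. Hence differentiating the equation at $t=0$ yields
$$
\left.\frac{\del}{\del t}\right|_{t=0}\bigl(\Delta_{\psi_t^* g_s} u_t\bigr) = 0,
$$
which is precisely the identity that drives Donaldson's computation in the $\Z_2$-harmonic function case, where the source term is literally zero. Donaldson's argument from that point on — expand $\Delta_{\psi_t^* g_s}u_t$ by the product and chain rules, isolate the contribution of the variation of $\psi_t$ near $\Sigma$, pair against the asymptotic expansion \cref{eq:introduction:expansion-donaldson} and read off that the leading coefficient of $\del_t|_{0}u_t$ is governed by $\tilde B(\theta)\cdot\nu$, the remaining lower-order terms being collected into a bilinear remainder in $(\tilde A,\nu)$ — uses nothing about the source term beyond the vanishing of $\del_t|_{0}(\Delta_{\psi_t^* g_s}u_t)$. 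Therefore his manipulation applies verbatim and produces $\frac{\delta\tilde A}{\delta\tilde\Sigma}(\tilde\Sigma,\nu) = \tilde B(\theta)\cdot\nu$ up to the very same $\tilde A$-quadratic error that appears in his Proposition 5.4.

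The only point requiring care — and the sole place where our modifications could in principle interfere — is ensuring that $t\mapsto u_t$ is a genuinely smooth family in the relevant Fr\'echet topology and that passing to the asymptotic expansion commutes with $\del_t$. But this is exactly the smooth tameness established in \cref{lem:Nash-Moser:tameness-E}, together with \cref{rem:estimates-of-B} for $\tilde B$, applied to the perturbed metrics $\psi_t^* g_s$; no new analysis is needed, and I expect no obstacle beyond this bookkeeping.
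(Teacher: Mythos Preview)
Your proposal is correct and follows essentially the same approach as the paper: the paper's argument is precisely that the right-hand side of \cref{eq:Nash-Moser:variation-1-forms} is independent of $t$, hence $\left.\frac{\del}{\del t}\right|_{t=0}\Delta_{\psi_t^* g_s} u_t = 0$, and therefore Donaldson's Proposition 5.4 applies verbatim. Your additional remarks on the $a$-independence of $\frac{\delta\mathbf{E}}{\delta\tilde\Sigma}$ and on the smoothness bookkeeping are natural elaborations that the paper leaves implicit.
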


Before we can finish this section and apply \cref{thm:hamilton}, we only need to check two things. First we need to check that the tame estimates of the quadratic errors are uniform in the parameter $s$. Secondly, we need to check that $\frac{\delta \mathbf{E}}{\delta \tilde \Sigma}$ is invertible with $\mathbf{E}$-quadratic error. 

\begin{lemma}
	\label{lem:Nash-Moser:quadratic-error}
	The tame estimates for (the the derivatives of) the $\tilde A$-quadratic error from \cref{lem:nash-moser:derivative-E-with-A-quadratic-error} are uniform with respect to the length $s$ of the necks.
\end{lemma}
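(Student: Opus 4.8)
The plan is to retrace the construction of the $\tilde A$-quadratic error in the proof of Proposition~5.4 of \cite{Donaldson2021} and to verify, term by term, that every constant occurring in its tame estimates (and in those of its derivatives) can be chosen independently of the neck length $s$. By \cref{lem:nash-moser:derivative-E-with-A-quadratic-error} this error is literally the one produced by Donaldson, hence it is assembled from a fixed list of ingredients: the inverse Laplacian $\Delta^{-1}_{\psi^* g_s}$ (conjugated by the weight $W$ and multiplied by $\tilde r^{2}$, as in \cref{lem:Nash-Moser:tameness-E}); the variation of $\Delta_{\psi_t^* g_s}$ in $t$, which depends on the metric $\psi^* g_s$, on the difference $g_s-\psi^* g_s$, and on the variation $\dot\psi$ of the localizing diffeomorphism; the bounded linear operators reading off $\tilde A(\theta)$ and $\tilde B(\theta)$ from the asymptotic expansion \cref{eq:introduction:expansion-donaldson}; and the underlying $\Z_2$-harmonic function $\tilde u_s$. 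Since compositions, products and bilinear combinations of uniform smooth tame maps are again uniform smooth tame maps, it suffices to check each ingredient separately.

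For the first three ingredients the work is essentially done. The conjugated, weighted inverse is a uniform smooth tame map: $\tilde r^{2}\Delta^{-1}_{\psi^* g_s}$ has operator norm bounded uniformly in $s$ by \cref{prop:bounded-inverse} and is a uniform smooth tame map by \cref{rem:estimates-of-B}, while conjugating by $W$ and multiplying by $\tilde r^{2}$ only alter the operator on the boundary region and therefore contribute $s$-independent constants. The terms $g_s-\psi^* g_s$, $\dot\psi$ and $W-1$ are, by the locality of Donaldson's construction of $\psi$ recalled in \cref{lem:Nash-Moser:tameness-E}, all supported in the boundary region, on which neither $g_s$ nor the geometry of $\Sigma$ depends on $s$; hence their tame estimates are uniform. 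Finally, the operators extracting $\tilde A$ and $\tilde B$ are bounded linear maps built from the inverse Laplacian, so by \cref{rem:estimates-of-B} together with the passage from $C^{k,\alpha}_{\mathrm{cf}}$-estimates on the boundary region to $C^{k-1,\alpha+\frac12}$-estimates on $\Sigma$, they are uniform smooth tame maps as well.

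It remains to control the appearance of $\tilde u_s$ in the error. The quadratic error arises by differentiating \cref{eq:Nash-Moser:variation-1-forms} in $t$; since its right-hand side is $t$-independent it drops out, so $\tilde u_s$ enters the error only through the variation of the Laplacian, hence only through its restriction to the boundary region, where the perturbation $\psi_t$ is localized. On the boundary region $\tilde u_s$ is governed by $\tilde A(\theta)$ and $\tilde B(\theta)$ via the expansion \cref{eq:introduction:expansion-donaldson}, and these are uniformly bounded: for the unperturbed $\Sigma$ by \cref{lem:Nash-Moser:better-estimates}, and on a neighbourhood of $\Sigma$ in $\mathcal{S}$ by continuity of the uniform tame maps from \cref{lem:Nash-Moser:tameness-E} and \cref{rem:estimates-of-B} --- this uses, as already arranged in the proof of \cref{lem:Nash-Moser:tameness-E}, the $\chi$-cutoff reorganization of the defining Poisson equation, which removes the spurious factor $e^{\frac34 s}$ without affecting the asymptotic expansion. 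Thus the data feeding the error is uniformly bounded, and combining the steps above shows the error is a uniform smooth tame map; the same argument applied after differentiation handles its derivatives. The main obstacle I expect is bookkeeping: one must check that Donaldson's error really does involve the solution only through its boundary-region restriction --- i.e. that the variation operators are genuinely localized there --- and that the neighbourhood $\mathcal{U}$ on which all of these uniform bounds are valid can itself be taken independent of $s$, so that the domain of $\mathbf{E}$ does not shrink as $s \to \infty$.
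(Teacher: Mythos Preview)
Your approach is correct in spirit and would go through, but it differs from the paper's in how the $\tilde A$-quadratic error is unpacked. You give an abstract decomposition into generic ingredients (the weighted inverse Laplacian, the $t$-variation of the operator, the asymptotic read-off maps, and the solution $\tilde u_s$) and then argue that each is uniformly controlled via \cref{prop:bounded-inverse}, \cref{rem:estimates-of-B}, and the locality established in \cref{lem:Nash-Moser:tameness-E}. The paper instead looks inside Donaldson's Proposition~5.4 and Equation~(4.1) and names the three concrete constituents of the error: the mean curvature of $\tilde\Sigma$, the localising diffeomorphisms $\psi_t$ and normal-bundle identifications, and --- the one genuinely non-trivial piece --- the Dirichlet-to-Neumann operator $P$ that sends a prescribed $r^{-1/2}$-coefficient $\sigma(\theta)$ to the $r^{1/2}$-coefficient of the harmonic extension $\tilde Q$. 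The first two are dispatched exactly as you do (boundary-region locality), but the third needs a short separate argument: the paper observes that the source term used to produce $\tilde Q$ can be chosen supported in the boundary region and hence $s$-independent, and then invokes \cref{rem:estimates-of-B} for the inverse Laplacian applied to it. Your generic decomposition does not visibly contain $P$ --- it is not an ``asymptotic read-off'' of $\tilde u_s$ but the solution of an auxiliary singular boundary-value problem --- so while your principles ultimately cover it, the paper's route makes transparent exactly which step carries the content, and spares you the ``bookkeeping'' you flag at the end.
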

\begin{proof}
	According to Proposition 5.4 and Equation (4.1) in \cite{Donaldson2021}, this $\tilde A$-quadratic error consists of three components: first, the $\tilde A$-quadratic error depends on the mean curvature of $\tilde \Sigma$. Because $\tilde \Sigma$ is close to $\Sigma$, we can force it to lie in the boundary region.  This way the mean curvature cannot depend on the length of the necks. 

	Secondly, as it is explained in Equation (4.10) in \cite{Donaldson2021}, the $\tilde A$-quadratic error will depend on the choice of diffeomorphism $\psi_t$ and the choice of trivialization of the normal bundles. In the proof of \cref{lem:Nash-Moser:tameness-E} we have seen that these diffeomorphisms can be chosen uniformly with respect to the length of the necks. Therefore, the dependence of $\psi_t$ in the tame estimates of $\tilde A$-quadratic error will be uniform with respect to the parameter $s$.

	Finally the $\tilde A$-quadratic error depends on the Dirichlet-to-Neumann operator $P_{\tilde\Sigma, g_s} \colon C^{k+4, \alpha}(\Sigma, \C) \to C^{k+1, \alpha + \frac{1}{2}}(\Sigma, \C)$ that is defined in Equation 5.2 in \cite{Donaldson2021}. Namely, Donaldson showed that for every function $\sigma \in C^{k+4, \alpha}(\Sigma, \C)$, there exists a $\Z_2$-antisymmetric function $Q_{\tilde\Sigma, g_s} \in C^{k+2, \alpha}_{\mathrm{loc}}(\hat M, \C)$ such that $W^{-1} \Delta_{\psi^* g_s} (W \cdot Q_{\tilde\Sigma, g_s}) = 0$ and has the asymptotic expansion
	$$
	Q_{\tilde\Sigma, g_s} = \sigma(\theta) \cdot e^{- i \phi} r^{-\frac{1}{2}} + P_{\tilde{\Sigma}, g_s}(\sigma) e^{i \phi} r^{\frac{1}{2}} + \O(r^{3/2}) .
	$$
	In order to relate $P_{\tilde\Sigma, g_s}$ with $P_{\tilde\Sigma, g_0}$, let $\chi$ be a step down function $C^\infty(M)$ supported on the boundary regions and with $\chi = 1$ on a neighbourhood of $\Sigma$. Assume that 
	$
	Q_{\tilde\Sigma, g_s} = \chi \cdot Q_{\tilde\Sigma, g_0} + u
	$.
	Then $u$ satisfies the equation
	$$
	W^{-1}\Delta_{\psi^* g_s} (W \: u) = W^{-1}\Delta_{\psi^* g_s}  W \cdot\left(Q_{\tilde\Sigma, g_s} - \chi \cdot Q_{\tilde\Sigma, g_0} \right),
	$$
	which is smooth and compactly supported on the boundary regions away from $\Sigma$. Therefore, $u \in E^{k+2, \alpha}(\overline{M})$ and so
	$$
	P_{\tilde \Sigma, g_s} = P_{\tilde \Sigma, g_0} + \mathcal{A}(u),
	$$
	where $\mathcal{A}$ is given in \cref{eq:A-map-donalson}.
	By \cref{rem:estimates-of-B}, $\mathcal{A}(u)$ is uniform with respect to $s$.
	%
\end{proof}

We finally show that \cref{thm:hamilton} can be applied on our model problem.
\begin{proposition}
	\label{prop:application-nash-moser-with-assumptions}
	When $s$ is sufficiently large and Assumptions \ref{asm:1}, \ref{asm:2} and \ref{asm:3} hold, $\mathbf{E}$ is invertible with $\mathbf{E}$-quadratic error. Moreover, for each $s \gg 0$, there exists a $\tilde \Sigma$ close to $\Sigma$, such that $(M,g_s)$ has a unique $\Z_2$-harmonic 1-form with singular set $\tilde \Sigma$. The cohomology class of this $\Z_2$-harmonic 1-form is $\sigma_s$, where $\sigma_s$ is defined in Assumption \ref{asm:1}.
\end{proposition}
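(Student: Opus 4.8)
The plan is to apply the Nash--Moser implicit function theorem, \cref{thm:hamilton}, to the error map $\mathbf{E}\colon\mathcal{U}\subset C^\infty(\Sigma,\C)\times\mathcal{S}\to C^\infty(\Sigma,\C)$, $\mathbf{E}(a,\tilde\Sigma)=\tilde A-a$, in the roles $\mathcal{F}=C^\infty(\Sigma,\C)$ (the parameter $a$), $\mathcal{G}=\mathcal{S}$ (the unknown link $\tilde\Sigma$), $\mathcal{H}=C^\infty(\Sigma,\C)$, with base point $(f_0,g_0)=(A,\Sigma)$. We already know that $\mathbf{E}$ is a smooth tame map with tame estimates uniform in $s$ (\cref{lem:Nash-Moser:tameness-E}), and that $\mathbf{E}(A,\Sigma)=0$ by construction. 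So the one remaining hypothesis of \cref{thm:hamilton} is that $\tfrac{\delta\mathbf{E}}{\delta\tilde\Sigma}$ has an inverse with $\mathbf{E}$-quadratic error, with all tame constants uniform in $s$; after establishing this we read off the conclusion by tracking how large $s$ must be for $a=0$ to land inside the neighbourhood that \cref{thm:hamilton} produces.

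For the invertibility step, \cref{lem:nash-moser:derivative-E-with-A-quadratic-error} gives $\tfrac{\delta\mathbf{E}}{\delta\tilde\Sigma}(a,\tilde\Sigma,\nu)=\tilde B(\theta)\cdot\nu$ up to an $\tilde A$-quadratic error whose tame estimates are uniform in $s$ by \cref{lem:Nash-Moser:quadratic-error}. By \cref{lem:Nash-Moser:better-estimates}, $B(\theta)$ converges to a non-zero constant at rate $e^{-\frac12 s}$; shrinking $\mathcal{U}$ by a uniform amount and using \cref{rem:estimates-of-B} for the continuity of $\tilde\Sigma\mapsto\tilde B$, we may assume $|\tilde B|$ is bounded below by a constant independent of $s$, so pointwise division by $\tilde B$ is a uniform smooth tame inverse of the leading term. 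The remaining bookkeeping — which I expect to be the main obstacle — is to convert Donaldson's $\tilde A$-quadratic error into the $\mathbf{E}$-quadratic form required by \cref{thm:hamilton}: since $\tilde A=\mathbf{E}(a,\tilde\Sigma)+a$ and the error is bilinear, it splits as $\mathbf{Q}(\,\cdot\,,\mathbf{E}(a,\tilde\Sigma),\nu)+\mathbf{Q}(\,\cdot\,,a,\nu)$, the first summand being a genuine $\mathbf{E}$-quadratic error and the second being a $\nu$-linear operator of operator norm $O(\|a\|)$. Working near $(A,\Sigma)$ where $\|A\|$ decays like $e^{(\frac34-\sqrt{\frac{17}{16}})s}\to 0$ (note $\sqrt{17/16}>1>\tfrac34$), this second term is a small, uniformly controlled perturbation of $\tilde B(\theta)\cdot$, so the perturbed leading operator is still uniformly invertible with uniform smooth tame inverse. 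This shows $\tfrac{\delta\mathbf{E}}{\delta\tilde\Sigma}$ is invertible with $\mathbf{E}$-quadratic error and uniform tame estimates.

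Now \cref{thm:hamilton} applies and yields neighbourhoods $B_A\subset C^\infty(\Sigma,\C)$ of $A$ and $B_\Sigma\subset\mathcal{S}$ of $\Sigma$ such that for every $a\in B_A$ there is a unique $\tilde\Sigma(a)\in B_\Sigma$ with $\mathbf{E}(a,\tilde\Sigma(a))=0$, and $a\mapsto\tilde\Sigma(a)$ is smooth tame. The decisive point is that all the tame constants feeding \cref{thm:hamilton} are uniform in $s$, so the radius of $B_A$ can be taken to be a fixed $\varepsilon>0$ independent of $s$; combined with $\|A\|\to 0$ from \cref{lem:Nash-Moser:better-estimates}, there is an $s_0$ with $0\in B_A$ for all $s>s_0$. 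Taking $a=0$ produces, for each such $s$, a unique link $\tilde\Sigma=\tilde\Sigma(0)$ near $\Sigma$ with $\tilde A\equiv 0$. Then the $L^2$-bounded $\Z_2$-harmonic $1$-form $\tilde\omega$ on $(M,g_s)$ with singular set $\tilde\Sigma$ representing the class $e^{\frac34 s}\sigma_s$ has vanishing $A$-term, hence is $C^0$-bounded and therefore a genuine $\Z_2$-harmonic $1$-form; rescaling by $e^{-\frac34 s}$ gives the $\Z_2$-harmonic $1$-form in the class $\sigma_s$, and uniqueness of $\tilde\Sigma$ near $\Sigma$ is inherited from \cref{thm:hamilton}. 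The genuinely analytic content (uniform bounded inverse of the Laplacian, uniform decay of the Fourier modes, uniform tameness of $\mathbf{E}$ and of the quadratic error) has been front-loaded into the preceding lemmas, so here the work is to assemble those ingredients and to race the two exponential rates $e^{(\frac34-\sqrt{\frac{17}{16}})s}$ and $e^{-\frac12 s}$ against each other to confirm that $0$ eventually enters the Nash--Moser neighbourhood.
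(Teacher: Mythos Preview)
Your proposal is correct and follows essentially the same approach as the paper: you split the $\tilde A$-quadratic error via $\tilde A=\mathbf{E}(a,\tilde\Sigma)+a$ into a genuine $\mathbf{E}$-quadratic piece and a small $a$-linear perturbation of the leading $\tilde B\cdot$ operator, use \cref{lem:Nash-Moser:better-estimates} and \cref{rem:estimates-of-B} to keep $\tilde B$ uniformly bounded below, and then apply \cref{thm:hamilton} with an $s$-independent neighbourhood so that $a=0$ eventually enters. The only cosmetic difference is that the paper does not phrase the last step as ``racing'' the two rates; it simply uses that $A(\theta)\to 0$ while the Nash--Moser radius $\epsilon$ is fixed.
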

\begin{proof}
	Using \cref{lem:nash-moser:derivative-E-with-A-quadratic-error} write
	$$
	\frac{\delta \mathbf{E}}{\delta \tilde \Sigma} (a, \Sigma, \nu) = \tilde B(\theta) \cdot \nu + \mathbf{Q}(\tilde \Sigma, \tilde A(\theta), \nu),
	$$
	where $Q$ is the quadratic error found by Donaldson in \cite{Donaldson2021}.
	Using $\mathbf{E} = \tilde A - a$, we find
	$$
	\frac{\delta \mathbf{E}}{\delta \tilde \Sigma} (a, \Sigma, \nu) = \tilde B(\theta) \cdot \nu + \mathbf{Q}(\tilde \Sigma, a, \nu)
	+ \mathbf{Q}(\tilde \Sigma, \mathbf{E}(a, \tilde \Sigma), \nu).
	$$
	Thus the derivative of $\mathbf{E}$ is $\tilde B(\theta) \cdot \nu + \mathbf{Q}(\tilde \Sigma, a, \nu)$ up to $\mathbf{E}$-quadratic error. It is also invertible if $\tilde B(\theta)$ is bounded below, away from zero and $\mathbf{Q}(\tilde \Sigma, a, \nu)$ is arbitrarily small.
	
	In \cref{lem:Nash-Moser:better-estimates} we have seen that $B(\theta)$, i.e. $\tilde B$ for the unperturbed metric $g_s$, is bounded below, away from zero, uniformly in the parameter $s \gg 0$. By \cref{rem:estimates-of-B} we know $\tilde \Sigma \to \tilde B$ is a uniform smooth tame map and so there is a neighbourhood in $\mathcal{S} $ near $\Sigma$, such that $\tilde B(\theta)$ is bounded below, uniformly in $s$. 
	
	Also in \cref{lem:Nash-Moser:better-estimates}, we have seen $A(\theta)$ decays when the parameter $s$ converges to infinity. So according to \cref{lem:Nash-Moser:quadratic-error}, there is a neighbourhood near $A(\theta)$ in $C^{\infty}(\Sigma, \C)$ such that for all $a$ in this neighbourhood, $\mathbf{Q}(\tilde \Sigma, a, \nu)$ is arbitrary small. We conclude that when $s \gg 0$, we can find a neighbourhood $\mathcal{U} \subset C^\infty(\Sigma, \C) \times \mathcal{U}_{\Sigma}$, independent of $s$, for which $\frac{\delta \mathbf{E}}{\delta \tilde \Sigma}$ is invertible with $\mathbf{E}$-quadratic error and the tame estimates for this inverse are uniformly bounded with respect to the length of the necks.
	
	Using \cref{thm:hamilton}, we conclude that there is a constant $\epsilon > 0$, such that for all $s \gg 0$ and all $a \in B_{\epsilon}(A(\theta)) \subset C^\infty(\Sigma, \C)$, there is a perturbation $\tilde \Sigma$ of $\Sigma$ such that $\mathbf{E}(a, \tilde \Sigma) = 0$. If $s$ is sufficiently large, the constant zero function is part of $B_\epsilon(A(\theta))$, which concludes the proof.
\end{proof}

\section{The non-decaying modes}
\label{sec:non-decaying-modes}
In the previous section we have seen that under three additional assumptions, one can construct a family of $\Z_2$-harmonic 1-forms on the model space $(M, g_s)$. While Assumption \ref{asm:1} can be regarded as a normalization and Assumption \ref{asm:3} resembles a non-degeneracy condition, Assumption \ref{asm:2} is highly non-trivial. 
In this section we will show that for a generic metric and a suitable choice of cohomology class, all assumptions can be satisfied.

We prove this in three steps. First we consider a fixed cohomology class and we study the limiting behaviour of the Fourier modes $u_{n0}$. We show that in the limit we are actually doing analysis on manifolds with cylindrical ends, which is a much simpler problem as in $b$-calculus a cokernel is almost always finite dimensional.

Secondly, we show that for a generic metric this cokernel can be spanned if $H^1_-(\hat M)$ is sufficiently large. So using Gram--Schmidt one can satisfy the assumptions for the limiting case where the neck has infinite length. Finally, we show that the cohomology classes can be perturbed when the neck has an arbitrary long but finite length.

In order to prove the the second step, we have to revisit the proof of \mycite{He2022}. Namely, we will calculate the variation of $u_{n0}$ under the variation of the metric in the limit where the neck is infinitely long. We will see that for a generic metric and fixed cohomology class, Assumption \ref{asm:3} is satisfied, while Assumption \ref{asm:2} will likely fail. By generalizing this proof we show that the Gram--Schmidt process can be done for a generic metric.

\subsection{The Fourier modes of $L^2$-bounded $\Z_2$-harmonic 1-forms with infinitely long necks}
\label{subsec:limit-of-fourier-modes}
In the previous section we considered a fixed model space $(M, g_s)$ and we considered perturbations of the link $\Sigma$ inside this model space. In this subsection we won't perturb the link, but we will consider the model space $(M, g_s)$ as a perturbation itself. Namely, fix the parameter $s > 0$, let $\eta$ be a smooth function on $\R$ such that $\eta(r)$ is compactly supported on the neck region of $(M, g_s)$. For any $t > 0$, consider the metric
\begin{equation}
	\label{eq:limit:def-g_st}
	g_{st} = g_s + 2 t \: \eta(r) \d r^2.
\end{equation}
This new metric is a perturbation of the model metric $g_s$ by stretching along the neck and so $g_{st} = g_{\tilde s}$ for some $\tilde s > s$.
As $\tilde s$ denotes the length of the necks, it can be calculated explicitly:
$$
\tilde s = \int_{\mathrm{Neck}} \sqrt{1 + 2 t \: \eta(r)} \d r 
= \int_{\mathrm{Neck}} (1 + t \: \eta(r) + \O(t^2)) \d r
$$
By requiring that $\int_{r=0}^\infty \eta(r) \d r = 1$, we have the condition $\frac{\del \tilde s}{\del t} =1$ and therefore we can view the variation of $g_s$ along $s$ as the variation of $g_{st}$ along $t$ near $t =0$.

We apply this perturbation on $L^2$-bounded $\Z_2$-harmonic 1-forms.
Like we did in \cref{subsec:regularity-estimates}, we pick an element of $H^1_-(\hat M)$ and pick a $\Z_2$-antisymmetric representative $\omega$, that is compactly supported on the interior region. We consider $u \in E^{k, \alpha}(\overline M)$ to be a solution of 
$$
\Delta_{g_{\tilde s}} u := \Delta_{g_{st}} u = - \d^* \omega.
$$
This is the same normalization we had in \cref{sec:model-metric}, but it is different for the one we used in \cref{subsec:error-map}. By linearity this shouldn't matter, but it makes the geometric interpretation in the end a bit more clean.

Outside the support of $\tau$, the metric $g_{st} = g_s$ and so on this region
\begin{equation}
	\label{eq:scaling-along-neck:vary-metric-1}
	\frac{\del}{\del \tilde s} \left(\Delta_{g_{\tilde s}} u\right)
=
\Delta_{g_{s}} \left(
	\frac{\del u }{\del \tilde s}
\right)
= 0.
\end{equation}
On the support of $\tau$, the metric $g_{st}$ is given explicitly by
$$
g_{st}|_{\operatorname{supp}(\tau)} = (1 + 2 t \: \eta(r)) \d r^2 + 16 \d \phi^2 + \d \theta^2
$$
and so on this region $u(s,t)$ must satisfy
\begin{align}
	\Delta_{g_{st}} u 
	=& 
	- \frac{1}{\sqrt{1 + 2 t \: \eta(r)}}\frac{\del}{\del r} 
	\left(\frac{1}{\sqrt{1 + 2 t \: \eta(r)}}\frac{\del u}{\del r} \right) 
	-\frac{1}{6}\frac{\del^2 u}{\del \phi^2} 
	-\frac{\del^2 u}{\del \theta^2} 
	= 0.
\end{align}
Like in \cref{eq:scaling-along-neck:vary-metric-1}, we take the derivative of this equation with respect to $t$ at $t=0$, which yields
\begin{align}
	\label{eq:scaling-along-neck:vary-metric-2}
	\Delta_{g_{s}} \left(
		\frac{\del u}{\del \tilde s}
	\right)
	+ 2 \eta (r)
	\frac{\del^2 u}{\del r^2}
	+\frac{\del \eta}{\del r} 
	 \frac{\del u}{\del r}
	= 0.
\end{align}
This equation is true on the whole of $\hat M$, as it reduces to \cref{eq:scaling-along-neck:vary-metric-1} outside the compact support of $\eta$.

Like we did in \cref{eq:expansion-fourier-modes-2}, we expand $u$ into the Fourier modes
$$
\label{eq:scaling-along-neck:expansion-fourier-modes}
u = \sum_{\substack{m \in \Z \\ n \text{ odd}}} u_{nm}(\tilde s, \omega, \Sigma_i) I_{nm}(r) e^{in \phi} e^{im \theta}
$$
near a connected component $\Sigma_i$ of $\Sigma$
and our goal is to understand the behaviour of $u_{n0}$ by solving \cref{eq:scaling-along-neck:vary-metric-2}. For this we need to invert $\Delta_{g_s}$ and take a projection to the suitable Fourier mode. Luckily for us, there is a Poisson map for this:
\begin{lemma}
	\label{lem:greens-like-function}
	Let $\chi \colon [0, \infty) \to [0,1]$ be a smooth function such that $\chi(r)$ equals 1 on the boundary region of a single connected component $\Sigma_i$ of $\Sigma$ and vanishes on the interior region. Let $I_{nm}(r)$ be the model solutions given in \cref{eq:expansion-fourier-modes-2}.
	For every positive odd integer $n$, consider $H_{n} \in E^{k, \alpha}(\overline M)$ such that
	$$
	G_n := \frac{1}{8 n \pi^2} \frac{1}{I_{n0}(r)} e^{- i n \phi} \chi(r) + H_n
	$$
	is harmonic with respect to $g_s$.
	If $f \in E^{k, \alpha}(\overline M)$ and $\Delta_{g_{s}}f$ is compactly supported on $\hat{M}$, then
	$$
	\int G_n \Delta_{g_s} f \Vol^{g_s} = f_{n0},
	$$
	where $f_{nm}$ is the term from the asymptotic expansion
	$$
	f = \sum_{\substack{m \in \Z \\ n \textnormal{ odd}}} f_{nm} I_{nm}(r) e^{in \phi} e^{im \theta}
	$$
	near $\Sigma_i$.
\end{lemma}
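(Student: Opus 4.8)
The plan is to regard $G_n$ as a Green's function adapted to the single Fourier mode $(n,0)$ on the component $\Sigma_i$: it should be $g_s$-harmonic on all of $\hat M$, carry a prescribed $r^{-n/2}$-type singularity along $\Sigma_i$ and be smooth along the other components, after which Green's second identity converts the pairing $\int_{\hat M}G_n\,\Delta_{g_s}f\,\Vol^{g_s}$ into a boundary integral that reads off $f_{n0}$. Write $G_n^{\mathrm{pre}}:=\tfrac{1}{8n\pi^2}\tfrac{1}{I_{n0}(r)}e^{-in\phi}\chi(r)$ for the singular model term. The construction of $H_n$ rests on the observation that on the neighbourhood of $\Sigma_i$ where $\tilde r\equiv r$ one has $\tfrac{1}{I_{n0}(r)}=r^{-n/2}$, which solves the Euler equation that \cref{eq:harmonic-function-on-boundary-and-neck} becomes for the $(n,0)$ mode, so $G_n^{\mathrm{pre}}$ is $g_s$-harmonic there; it is also (trivially) harmonic near the other components, where $\chi\equiv 0$. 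Everywhere else $G_n^{\mathrm{pre}}$ is smooth and $\Z_2$-antisymmetric, so $\Delta_{g_s}G_n^{\mathrm{pre}}$ is a bounded smooth function vanishing near $\Sigma$ and, since $I_{n0}$ grows along the necks, decaying exponentially there; in particular $\Delta_{g_s}G_n^{\mathrm{pre}}\in C^{k,\alpha}_{\mathrm{cf}}(\hat M)$. As $\tilde r^2\Delta_{g_s}$ is an isomorphism of the $\Z_2$-antisymmetric H\"older spaces (Proposition 3.4 in \cite{Donaldson2021}, reproved with uniform constants in \cref{prop:bounded-inverse}), there is a unique $H_n\in C^{k+2,\alpha}_{\mathrm{cf}}(\hat M)$ with $\Delta_{g_s}H_n=-\Delta_{g_s}G_n^{\mathrm{pre}}$, i.e. with $G_n:=G_n^{\mathrm{pre}}+H_n$ harmonic; since $\Delta_{g_s}G_n^{\mathrm{pre}}$ vanishes near $\Sigma$, $H_n$ is harmonic there, so near $\Sigma_i$ we get $G_n=\tfrac{1}{8n\pi^2}r^{-n/2}e^{-in\phi}+H_n$ with $H_n$ smooth, and near every other component $G_n=H_n$ is smooth.

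For the reproducing identity, note that since $\hat M$ does not contain $\Sigma$, the assumption that $\Delta_{g_s}f$ is compactly supported forces it to vanish on $\{r<\delta\}$ for some $\delta>0$; hence $f$ is $g_s$-harmonic near $\Sigma$ and, being $C^0$-bounded there, admits the Fourier expansion of the statement with only the regular-at-$0$ solutions $I_{nm}$, and $\int_{\hat M}G_n\,\Delta_{g_s}f\,\Vol^{g_s}=\int_{\hat M\cap\{r\ge\epsilon\}}G_n\,\Delta_{g_s}f\,\Vol^{g_s}$ for all $\epsilon<\delta$. I would then apply Green's second identity on $\hat M\cap\{r\ge\epsilon\}$: because $G_n$ is harmonic there, the integral reduces to a sum over the boundary tori $\{r=\epsilon\}_j$ of $\pm\int_{\{r=\epsilon\}_j}\big(G_n\,\partial_r f-f\,\partial_r G_n\big)\,\d A$, with the overall sign fixed by the Laplacian convention of the paper and the inward orientation of the boundary of $\{r\ge\epsilon\}$. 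For $j\ne i$, $G_n=H_n$ near $\Sigma_j$, and $H_n$ and $f$, being $\Z_2$-antisymmetric and harmonic, are $\O(r^{1/2})$, so the corresponding flux is $\O(\epsilon)$; being independent of $\epsilon$ (the pairing of two harmonic functions is divergence-free) it vanishes identically, and the same argument removes the $H_n$-part of the flux at $\Sigma_i$. What remains is the flux of $\tfrac{1}{8n\pi^2}r^{-n/2}e^{-in\phi}$ at $\Sigma_i$, which I would evaluate directly from $I_{n0}(r)=r^{n/2}$ near $r=0$, the induced area form $\d A=2r\,\d\phi\,\d\theta$, and the orthogonality $\int_{T^2}e^{i(n'-n)\phi}e^{im\theta}\,\d\phi\,\d\theta=4\pi^2\delta_{n'n}\delta_{m0}$; only the $(n,0)$ mode of $f$ survives, and the constant $\tfrac{1}{8n\pi^2}$ is exactly the one making the two resulting pieces combine to $f_{n0}$.

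I expect the main obstacle to be this first step: one really needs the prescribed singularity $\tfrac{1}{I_{n0}(r)}e^{-in\phi}$ to be \emph{exactly} $g_s$-harmonic on a full neighbourhood of $\Sigma_i$, since that is what forces $\Delta_{g_s}G_n^{\mathrm{pre}}$ to vanish near $\Sigma$, hence $H_n$ to be harmonic and smooth there, hence $G_n$ to carry precisely the stated singularity and nothing else; this rests on $I_{n0}$ collapsing to the pure power $r^{n/2}$ near $\Sigma$, so that its reciprocal again solves the model equation there. The subsequent boundary-term evaluation is routine, but one has to keep track of the sign contributed by the inward normal of $\{r\ge\epsilon\}$ and of the factor $(2\pi)^{-2}$ carried by the Fourier projection $\pi_{nm}$.
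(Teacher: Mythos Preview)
Your proposal is correct and follows essentially the same route as the paper: construct $H_n$ by inverting the Laplacian on the compactly supported error $\Delta_{g_s}G_n^{\mathrm{pre}}$ (the paper does this in a separate remark rather than inside the proof), then apply Green's identity on $\{r\ge\epsilon\}$, discard the $H_n$-contributions using that both $H_n$ and $f$ are $\O(r^{1/2})$, and evaluate the surviving boundary term via Fourier orthogonality and $I_{n0}(r)=r^{n/2}$ near $\Sigma$. Your observation that the boundary flux is independent of $\epsilon$ (hence the $\O(\epsilon)$ pieces vanish outright) is a harmless variant of the paper's $r_0\to 0$ limit.
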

\begin{remark}
	\label{rem:extension-In0}
	Recall that on the neck and boundary regions, the model solutions $I_{nm}(r)$ must satisfy
	$$
	\frac{1}{\tilde{r}}
		\frac{\del}{\del r} \left(\tilde{r} \frac{\del I_{nm}}{\del r}\right)
		 - \left(\frac{n^2}{4 \tilde{r}^2} + m^2\right) I_{nm} = 0
	$$
	and behave as $I_{nm}(r) = r^{\frac{|n|}{2}} + \O(r^{\frac{|n|+1}{2}})$ near $\Sigma$. For the case $m = 0$, it can be solved explicitly and the solution is
	$$
	I_{n0}(r) = e^{\frac{|n|}{2} \int_{1}^r \tilde{r}^{-1} \d r}.
	$$
	A consequence of this is that $\frac{1}{I_{n0}(r)}e^{-in \phi}$ is also a harmonic function on the boundary and neck region.
\end{remark}
\begin{remark}
	\label{rem:behaviour-Hn}
	The condition $\Delta_{g_s} G_n = 0$ implies that $H_n$ must satisfy 
	$$
	8 n \pi^2 \: \Delta_{g_s} H_n 
	= 2\langle \d\: (I_{n0}^{-1}(r) e^{- i n \phi}),   \d \chi \rangle_{g_s}
	- I_{n0}^{-1}(r) e^{- i n \phi} \Delta_{g_s}
		\chi(r),
	$$
	which is compactly supported on $\hat{M}$ and $\Z_2$-antisymmetric. We conclude $H_n$ exists and is 
	unique. Therefore, $G_n$ in \cref{lem:greens-like-function} is well-defined.
\end{remark}
\begin{proof}[Proof of \cref{lem:greens-like-function}]
	Let $K$ be a compact set inside $\hat M$. By harmonicity of $G_n$ and Stokes theorem,
	$$
	\int_{K} G_n \Delta_{g_s} f \Vol^{g_s} 
	= \int_{\del K} f * \d G_n - G_n * \d f.
	$$
	Let $0 \le r_0 \ll 1$ and assume that the boundary of $K$ is ${r_0} \times T^2$ near each connected component of $\Sigma$. While taking care of the orientation, one can show
	$$
	\int_{K} G_n \Delta_{g_s} f \Vol^{g_s} 
	= \int_{\{r_0\} \times T^2} \left(G_n \frac{\del f}{\del r} - f \frac{\del G_n}{\del r}\right) 2 r \d \phi \wedge \d \theta
	$$
	Sufficiently close to $\Sigma_i$, $G_n := \frac{1}{8 n \pi^2} r^{-\frac{n}{2}} e^{- i n \phi} + H_n$. In the integral, terms related to $H_n$ will not contribute in the limit $r_0 \to 0$, because both $H_n$ and $f$ are of order $\O(r^{\frac{1}{2}})$ near $\Sigma$. Therefore,
	\begin{align}
		\int_{\hat M} &G_n \Delta_{g_s} f \Vol^{g_s} \\
	=& \frac{1}{8n \pi^2}  \lim \limits_{r_0 \to 0}\int_{\{r_0\} \times T^2} \left(r^{- \frac{n}{2}} e^{-i n \phi} \frac{\del f}{\del r} + \frac{n}{2} r^{- \frac{n}{2}-1} f e^{- i n \phi}\right) 2 r \d \phi \wedge \d \theta   \\
	\label{eq:scaling-along-neck:step-in-proof-greenslike-function}
	=&  \frac{1}{4 n \pi^2} \lim \limits_{r_0 \to 0}  \int_{\{r_0\} \times T^2} e^{-i n \phi}  \left(r^{1 - \frac{n}{2}} \frac{\del f}{\del r} + \frac{n}{2} r^{- \frac{n}{2}} f \right) \d \phi \wedge \d \theta.
	\end{align}

	Next we consider the asymptotic expansion of $f$ that is stated in the lemma.
	Using \cref{rem:extension-In0}, $I_{n0}(r) = r^{\frac{|n|}{2}}$ near $\Sigma$.
	Because the integral in \cref{eq:scaling-along-neck:step-in-proof-greenslike-function} contains a projection to a Fourier mode, 
	\begin{align}
		\int_{K} G_n \Delta_{g_s} f \Vol^{g_s} 
	=& \frac{1}{n} f_{n0}
	\lim \limits_{r_0 \to 0} \left(
	r^{1 - \frac{n}{2}} \frac{\del I_{n0}}{\del r} + \frac{n}{2} r^{- \frac{n}{2}} \cdot I_{n0}\right) \\
	=& \frac{1}{n} f_{n0}
	\lim \limits_{r_0 \to 0} \left(
	r^{1 - \frac{n}{2}} \frac{\del r^{\frac{n}{2}}}{\del r} + \frac{n}{2} r^{- \frac{n}{2}} \cdot r^{\frac{n}{2}} \right) \\
	=& f_{n0}.
	\end{align}
\end{proof}

Applying \cref{lem:greens-like-function} on \cref{eq:scaling-along-neck:vary-metric-2} we conclude
\begin{align}
	\frac{\del u_{n0}}{\del \tilde s} 
	=& - \int_{\hat M} \left(\frac{1}{8 n \pi^2} I_{n0}^{-1}(r) e^{- i n \phi} \chi(r) + H_n\right) \left(
		2 \eta (r)
		\frac{\del^2 u}{\del r^2}
		+\frac{\del \eta}{\del r} 
		 \frac{\del u}{\del r}
	\right) \Vol^{g_s}.
\end{align}
We calculate this integral in several steps. Assuming that $\chi = 1$ on the support of $\eta$, we can expand $u$ and get
\begin{align}
	\frac{-1}{8 n \pi^2}& \int_{\hat M} \left(I_{n0}^{-1}(r) e^{- i n \phi} \chi(r) \right) \left(
		2 \eta (r)
		\frac{\del^2 u}{\del r^2}
		+\frac{\del \eta}{\del r} 
		 \frac{\del u}{\del r}
	\right) \Vol^{g_s} \\
	=& \frac{-1}{8 n \pi^2} \int_{\hat M} I_{n0}^{-1}(r) e^{- i n \phi} \left(
		2 \eta (r)
		\frac{\del^2}{\del r^2}
		+\frac{\del \eta}{\del r} 
		 \frac{\del}{\del r}
	\right) \left( u_{n0} I_{n0}(r) e^{in\phi} + \ldots\right) 2 \tilde {r} \d r \wedge \d \phi \wedge \d \theta.
\end{align}
Only the $u_{n0}$ terms contribute to this calculation and this this simplifies to
\begin{align}
	\frac{-1}{8 n \pi^2}& \int_{\hat M} \left(I_{n0}^{-1}(r) e^{- i n \phi} \chi(r) \right) \left(
		2 \eta (r)
		\frac{\del^2 u}{\del r^2}
		+\frac{\del \eta}{\del r} 
		 \frac{\del u}{\del r}
	\right) \Vol^{g_s} \\
	=& \frac{- u_{n0}}{n} \int_{0}^{\infty} I_{n0}^{-1}(r) \left(
		2 \eta (r)
		\frac{\del^2 I_{n0}}{\del r^2}
		+\frac{\del \eta}{\del r} 
		 \frac{\del I_{n0}}{\del r}
	\right)\tilde {r} \d r.
\end{align}
A consequence of \cref{rem:extension-In0} is that $\frac{\del I_{n0}}{\del r} =\frac{n}{2 \tilde r} I_{n0}$. Using that $\tilde r = 2$ on the neck region,
\begin{align}
	\frac{-1}{8 n \pi^2}& \int_{\hat M} \left(I_{n0}^{-1}(r) e^{- i n \phi} \chi(r) \right) \left(
		2 \eta (r)
		\frac{\del^2 u}{\del r^2}
		+\frac{\del \eta}{\del r} 
		 \frac{\del u}{\del r}
	\right) \Vol^{g_s} \\
	=& -u_{n0} \int_{0}^{\infty}  \left(
		\frac{n} {4} 
		\eta (r)
		+\frac{1} {2} \frac{\del \eta}{\del r} 
	\right) \d r.
\end{align}
Because $\eta$ had compact support, the fundamental theorem of calculus implies $\int \frac{\del \eta} \del r \d r = 0$. Moreover, we normalised $\eta$ such that $\int \eta(r) \d r =1$ and so
\begin{align}
	\frac{-1}{8 n \pi^2} \int_{\hat M} \left(I_{n0}^{-1}(r) e^{- i n \phi} \chi(r) \right) \left(
		2 \eta (r)
		\frac{\del^2 u}{\del r^2}
		+\frac{\del \eta}{\del r} 
		 \frac{\del u}{\del r}
	\right) \Vol^{g_s} = - \frac{n}{4} u_{n0}.
\end{align}
In conclusion,
\begin{align}
	\label{eq:diff-equation-for-un0}
	\frac{\del u_{n0}}{\del \tilde s} + \frac{n}{4} u_{n0}
	=& - \int_{\hat M} H_n \left(
		2 \eta (r)
		\frac{\del^2 u}{\del r^2}
		+\frac{\del \eta}{\del r} 
		 \frac{\del u}{\del r}
	\right) \Vol^{g_s}.
\end{align}
In order to get some qualitative information out of this differential equation, we need to estimate the integral on the right hand side. Because $u$ is a solution of $\Delta_{g_{\tilde s}} u= - \d^* \omega$, \cref{lem:Schauder-estimate-Holder} 
implies that on the neck the derivatives of $u$ are bounded by $\| \d^* \omega \|_{C^0(\hat M)}$. This bound alone won't be enough, but we have some freedom to chose $\eta$ and $\chi$:

\begin{lemma}
	\label{eq:scaling-along-neck:decay-rate-H_n}
	One can choose the bump function $\eta$ from \cref{eq:limit:def-g_st} and the step function $\chi$ from \cref{lem:greens-like-function} such that 
	$H_n = \O(e^{- \frac{n+1}{4} s})$ on the support of $\eta$.
\end{lemma}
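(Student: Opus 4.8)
The plan is to exploit the freedom in where $\eta$ and $\chi$ are supported: I will put the support of $\d\chi$ (hence the source of $H_n$) at the interior end of the neck and put $\operatorname{supp}(\eta)$ at the boundary end, a distance $\sim s$ away, and then harvest the two exponential factors $e^{-\frac n4 s}$ and $e^{-\frac14 s}$ from the two ends separately. Concretely, fix a length $L>0$ independent of $s$, take $\eta\ge 0$ supported in $[R_0,R_0+L]$ with $\int\eta=1$, and take $\chi$ equal to $1$ on $[0,R_0+s-L]$, equal to $0$ on $[R_0+s,\infty)$, with its transition and all derivatives up to order $k+2$ bounded uniformly in $s$. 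Then $\chi\equiv 1$ on $\operatorname{supp}(\eta)$ whenever $s\ge 2L$, which is exactly the condition used in deriving \cref{eq:diff-equation-for-un0}.

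First I would estimate the source of $H_n$. By \cref{rem:behaviour-Hn} the function $\Delta_{g_s}H_n$ is supported in $\operatorname{supp}(\d\chi)\subset[R_0+s-L,R_0+s]\times T^2$, where $\tilde r=2$ and, by \cref{rem:extension-In0}, $I_{n0}(r)=I_{n0}(R_0)\,e^{\frac n4(r-R_0)}$; hence $I_{n0}^{-1}$ and its $r$-derivatives are $\O(e^{-\frac n4(s-L)})$ there. Since on the neck the conformally rescaled H\"older norm agrees up to a fixed constant with the standard one, this gives $\|\tilde r^2\Delta_{g_s}H_n\|_{C^{k,\alpha}_{\mathrm{cf}}(\hat M)}=\O(e^{-\frac n4 s})$, the implied constant depending on $n$, $k$, and the fixed norm of $\chi$ but not on $s$. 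Feeding this into \cref{prop:bounded-inverse} applied to the equation of \cref{rem:behaviour-Hn} yields $\|H_n\|_{C^{k+2,\alpha}_{\mathrm{cf}}(\hat M)}=\O(e^{-\frac n4 s})$, in particular $\|H_n\|_{C^0(\hat M)}=\O(e^{-\frac n4 s})$.

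The extra factor $e^{-\frac14 s}$ comes from the fact that $H_n$ is harmonic on $\{r\le R_0+s-L\}$, a region on which the metric is invariant under the $T^2$-rotations and translations. Decomposing $H_n$ into Fourier modes there, each mode is $c_{n'm'}I_{n'm'}(r)e^{in'\phi}e^{im'\theta}$: the model solution $I_{n'm'}$ of \cref{eq:harmonic-function-on-boundary-and-neck} appears because $H_n$ is bounded near $\Sigma$ and $I_{n'm'}$ is the unique solution of that equation which is bounded at $r=0$. Since the Fourier projections are bounded on $C^0$ with $s$-independent norm, evaluating at $r=R_0+s-L$ gives $|c_{n'm'}|\,I_{n'm'}(R_0+s-L)\le C\|H_n\|_{C^0}$. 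On $\operatorname{supp}(\eta)\subset[R_0,R_0+L]$ the values $I_{n'm'}(r)$ are bounded independently of $s$, while the same elementary estimate on $I_{n'm'}$ as in the proof of \cref{lem:decay-rate-fourier-modes}, now with base point $R_0+L$, gives $I_{n'm'}(R_0+L)/I_{n'm'}(R_0+s-L)\le C\,e^{-\lambda_{n'm'}(s-2L)}$ for $s$ large, where $\lambda_{n'm'}=\sqrt{n'^2/16+m'^2}\ge\tfrac14$. Summing over $n'$ odd and $m'\in\Z$ — the series $\sum e^{-\lambda_{n'm'}(s-2L)}$ converges and is $\O(e^{-s/4})$ for $s\gg0$ because the minimal value of $\lambda_{n'm'}$ is $\tfrac14$ — yields $|H_n|\le C\|H_n\|_{C^0}\,e^{-s/4}=\O(e^{-\frac{n+1}4 s})$ on $\operatorname{supp}(\eta)$; elliptic regularity on the neck upgrades this to higher norms if required.

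The only genuine difficulty here is bookkeeping: one has to check that $\eta$ and $\chi$ can be chosen to meet all the constraints imposed elsewhere ($\chi=1$ on the boundary region and on $\operatorname{supp}(\eta)$, $\chi=0$ on the interior region, $\eta\ge0$, $\int\eta=1$) while realizing the placement above, and that the constants in the source estimate and in the convergence of the Fourier series are uniform in $s$. Differentiating $I_{n0}^{-1}$ produces a harmless polynomial factor in $n$, but since $n$ is a fixed odd integer (and in the applications only $n=1,3$ occur) this is absorbed into the implied constant.
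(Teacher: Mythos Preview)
Your proof is correct and follows essentially the same route as the paper: place $\operatorname{supp}(\d\chi)$ at the interior end of the neck to get the $e^{-\frac{n}{4}s}$ bound on the source (hence on $H_n$ globally via \cref{prop:bounded-inverse}), then use the Fourier decomposition of the harmonic $H_n$ along the neck to gain the extra $e^{-\frac{1}{4}s}$ at the boundary end where $\eta$ lives. The paper phrases the second step as ``repeat the proof of \cref{lem:Nash-Moser:better-estimates}'' rather than writing out the mode-by-mode estimate and summation as you do, but the content is the same.
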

\begin{proof}
	According to \cref{rem:behaviour-Hn}, the function $H_n$ must satisfy
	$$
	8 n \pi^2 \: \Delta_{g_s} H_n 
	= 2\langle \d\:(I_{n0}^{-1}(r) e^{- i n \phi}),   \d \chi \rangle_{g_s}
	- I_{n0}^{-1}(r) e^{- i n \phi} \Delta_{g_s}
		\chi(r).
	$$
	Assume that $\d\chi $ is supported close to the intersection of the neck and interior regions. By \cref{rem:extension-In0}, $I_{n0}^{-1} = \O(e^{-\frac{n}{4}s})$ and so $\Delta_{g_s} H_n = \O(e^{-\frac{n}{4}s})$. By the uniform bounded inverse estimate $H_n = \O(e^{-\frac{n}{4}s})$ everywhere.
	
	Outside the support of $\d \chi$ the function $H_n$ is harmonic and so we can repeat the proof of \cref{lem:Nash-Moser:better-estimates} to get an improved estimate of $H$ along the neck. Assume that $\eta$ is supported near the intersection between the boundary and neck region. As the slowest model solution grows by $e^{\frac{1}{4}r}$ and the length of the neck is $s$,
	$H_n = \O(e^{-\frac{n+1}{4}s})$.
\end{proof}

Write $v_{n0} = v_{n0}(s, \omega, \Sigma_i) := e^{\frac{n}{4} s} u_{n0}(s, \omega, \Sigma_i)$. By \cref{eq:diff-equation-for-un0}, $v_{n0}$ must satisfy $\frac{\del v_{n0}}{\del s}
	= \O(e^{- \frac{1}{4} s})$.
This concludes $v_{n0}$ converges to a constant as $s \to \infty$ with rate $e^{- \frac{1}{4}s}$.

\begin{remark}
	The factor $e^{\frac{n}{4} s}$ in the definition we have already encountered, in \cref{lem:decay-rate-fourier-modes}. There we have seen $u_{n0} = \O(e^{- \frac{n}{4} s})$. If $\lim_{s \to \infty} v_{n0}$ is zero, then this decay rate is optimal.
\end{remark}
Knowing that the limit $\lim\limits_{s \to \infty} v_{n0}$ exists, we can ask whether there is a geometric interpretation of this limit. To do this, we look at the Poisson map that yields $v_{n0}$. That is, consider
$$
G_n^{v} := e^{\frac{n}{4}s} G_n = \frac{e^{\frac{n}{4}s}}{8 n \pi^2} e^{- \frac{n}{2} \int_{1}^r \tilde{r}^{-1} \d r} e^{- i n \phi} \chi(r) + e^{\frac{n}{4}s} H_n,
$$
which satisfies the property $\int_{\hat M} G_n^v \Delta u = e^{\frac{n}{4}s} u_{n0} = v_{n0}$. Next, we reparametrize the neck such that the interior region is fixed. Recall from the construction of the model metric in \cref{sec:model-metric} that for the neck region $r \in (R_0, R_0 + s)$. So define the new parameter $r' := r - R_0 - s \in (-s, 0)$. Using that $\tilde r = 2$ on the neck region,
\begin{align}
	G_n^{v} 
	=& e^{- \frac{n}{2} \int_{1}^{R_0} \tilde{r}^{-1} \d r} \cdot \frac{1}{8 n \pi^2} e^{ - \frac{n}{4} r'} e^{- i n \phi} \chi(r') + e^{\frac{n}{4}s} H_n.
\end{align}
Using the coordinate $r'$, we can view the complement of the boundary region in $\hat M$ as a manifold with cylinders of length $s$. In the limit $s \to \infty$ this complement will become a manifold with cylindrical ends. We need to ask whether $\frac{1}{8 n \pi^2} e^{ - \frac{n}{4} r'} e^{- i n \phi}$ can be used for a Poisson map on this manifold with cylindrical ends. 

Let $(\hat M, g_{\mathrm{cyl}})$ be this manifold with cylindrical ends. There is a well-developed weighted analysis for these kind of spaces. (e.g. see \cite{Pacard2008}) In summary, for every compactly supported smooth $\Z_2$-antisymmetric function $f$, there is a bounded smooth function $v$ such that $\Delta_{g_{\mathrm{cyl}}} v = f$ and on a cylindrical end it has the expansion
$$
v = \sum_{\substack{m \in \Z \\ n \text{ odd}}} v_{nm}^{\mathrm{cyl}}(\omega, \Sigma_i) e^{in \phi} e^{im \theta} e^{\sqrt{\frac{n^2}{16} + m^2} r'}.
$$
Like we have shown in \cref{lem:greens-like-function}, one can find a bounded smooth map $H^{\mathrm{cyl}}_n$ such that
\begin{equation}
	\label{eq:limit:def-G^cyl}
	G_n^{\mathrm{cyl}} := \frac{1}{8 n \pi^2} e^{ - \frac{n}{4} r'} e^{- i n \phi} \chi(r') + H_n^{\mathrm{cyl}}
\end{equation}
is a Poisson map. That is, for all $f \in C^{k, \alpha}_{cpt}(\hat{M})$, the integral $\int_{\hat M} G^{\mathrm{cyl}}_n f \Vol^{g_{\mathrm{cyl}}}$ equals $v^{\mathrm{cyl}}_{n0}$.
Comparing $G^{v}_n$ with $G^{\mathrm{cyl}}_n$ will explain the limiting behaviour of $v_{n0}$:

\begin{proposition}
	Let $v^{\mathrm{cyl}}$ be a solution of $\Delta_{g_{\mathrm{cyl}}} v^{\mathrm{cyl}} = - \d^* \omega$ and let $v_{nm}^{\mathrm{cyl}}(\omega, \Sigma_i)$ be the expansion of $v^{\mathrm{cyl}}$ on a cylindrical end.
	Let $C_n := e^{- \frac{n}{2} \int_{1}^{R_0} \tilde{r}^{-1} \d r}$ be a constant depending on $n$.
	In the limit $s \to \infty$, $v_{n0}(s, \omega, \Sigma_i)$ converges to $C_n v^{\mathrm{cyl}}_{n0}(\omega, \Sigma_i)$ with rate $e^{- \frac{1}{4}s}$.
\end{proposition}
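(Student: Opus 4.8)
The plan is to read $v_{n0}(s,\omega,\Sigma_i)$ and $v^{\mathrm{cyl}}_{n0}(\omega,\Sigma_i)$ off the two Poisson maps $G_n^v$ and $G_n^{\mathrm{cyl}}$ and compare them on the interior region, which is compact, contains the support of $\d^*\omega$, and is where $(\hat M,g_s)$ and the cylindrical-end manifold $(\hat M,g_{\mathrm{cyl}})$ agree. Applying \cref{lem:greens-like-function} to the solution $u$ of $\Delta_{g_s}u=-\d^*\omega$ gives
$$
v_{n0}(s,\omega,\Sigma_i)=e^{\frac{n}{4}s}u_{n0}(s,\omega,\Sigma_i)=\int_{\hat M}G_n^v\,(-\d^*\omega)\,\Vol^{g_s},
$$
and by the defining property of $G_n^{\mathrm{cyl}}$ we have $v^{\mathrm{cyl}}_{n0}(\omega,\Sigma_i)=\int_{\hat M}G_n^{\mathrm{cyl}}\,(-\d^*\omega)\,\Vol^{g_{\mathrm{cyl}}}$. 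On the interior region the cutoff $\chi$ vanishes, so there $G_n^v=e^{\frac{n}{4}s}H_n$ and $G_n^{\mathrm{cyl}}=H_n^{\mathrm{cyl}}$, while $g_s=g_{\mathrm{cyl}}=g$; since $\d^*\omega$ is supported on the interior region, subtracting yields
$$
v_{n0}(s,\omega,\Sigma_i)-C_n v^{\mathrm{cyl}}_{n0}(\omega,\Sigma_i)=\int_{\mathrm{int}}\bigl(e^{\frac{n}{4}s}H_n-C_n H_n^{\mathrm{cyl}}\bigr)\,(-\d^*\omega)\,\Vol^{g}.
$$
It therefore suffices to show that $e^{\frac{n}{4}s}H_n\to C_n H_n^{\mathrm{cyl}}$ uniformly on the (compact) interior region; the rate $e^{-\frac14 s}$ will then be inherited from the fact, established above via $\frac{\del v_{n0}}{\del s}=\O(e^{-\frac14 s})$, that $v_{n0}(s,\omega,\Sigma_i)$ converges to its limit with rate $e^{-\frac14 s}$.

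To prove the convergence of the $H_n$ I would repeat the compactness scheme used in the proof of \cref{prop:bounded-inverse}. First, \cref{rem:behaviour-Hn} together with the formula $I_{n0}^{-1}=C_n\,e^{-\frac{n}{4}s}\,e^{-\frac{n}{4}r'}$ on the neck from \cref{rem:extension-In0} shows that $e^{\frac{n}{4}s}H_n$ solves $\Delta_{g_s}(e^{\frac{n}{4}s}H_n)=\frac{C_n}{8n\pi^2}F$, where $F$ is the compactly supported expression built out of $e^{-\frac{n}{4}r'}e^{-in\phi}$, $\d\chi$ and $\Delta\chi$; taking $\d\chi$ supported near the neck--interior junction as in \cref{eq:scaling-along-neck:decay-rate-H_n} keeps $\operatorname{supp}F$ inside a fixed region, independent of $s$, on which $g_s$ coincides with $g_{\mathrm{cyl}}$, and there $C_n H_n^{\mathrm{cyl}}$ satisfies the very same equation. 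Second, \cref{prop:bounded-inverse} applied to $H_n$ gives $\|H_n\|_{C^{k+2,\alpha}_{\mathrm{cf}}}\le C\,\|\Delta_{g_s}H_n\|_{C^{k,\alpha}_{\mathrm{cf}}}=\O(e^{-\frac{n}{4}s})$, so $e^{\frac{n}{4}s}H_n$ is bounded in $C^{k+2,\alpha}_{\mathrm{cf}}$ uniformly in $s$. Identifying the interior region of $(\hat M,g_s)$ with that of $(\hat M,g_{\mathrm{cyl}})$ and using the $r'$-coordinate to realise the length-$s$ neck as an ever longer piece of the cylindrical end, Arzel\`a--Ascoli plus a diagonal argument over a compact exhaustion of $(\hat M,g_{\mathrm{cyl}})$ produce a subsequence of $e^{\frac{n}{4}s}H_n$ converging in $C^{k+1}_{\mathrm{loc}}$ to a bounded, $\Z_2$-antisymmetric $H_\infty$ with $\Delta_{g_{\mathrm{cyl}}}H_\infty=\frac{C_n}{8n\pi^2}F$.

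Finally one checks $H_\infty=C_n H_n^{\mathrm{cyl}}$: their difference is bounded, $\Z_2$-antisymmetric and $g_{\mathrm{cyl}}$-harmonic, so on each cylindrical end its Fourier modes must be of the form $c\,e^{\sqrt{\frac{n^2}{16}+m^2}\,r'}e^{in\phi}e^{im\theta}$ (the oppositely growing mode is excluded by boundedness, and $\sqrt{\frac{n^2}{16}+m^2}>0$ since $n$ is odd), hence it decays at every end and, applying the maximum principle to its real and imaginary parts, it vanishes. As every subsequence of $e^{\frac{n}{4}s}H_n$ thus has a further subsequence converging to $C_n H_n^{\mathrm{cyl}}$, the whole sequence converges to $C_n H_n^{\mathrm{cyl}}$ in $C^{k+1}_{\mathrm{loc}}$, in particular uniformly on the interior region, which completes the argument. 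I expect the compactness step to be the main obstacle: it hinges on the bound for $e^{\frac{n}{4}s}H_n$ being genuinely uniform in $s$ --- exactly the content of \cref{prop:bounded-inverse} --- and on arranging the cutoff $\chi$ (and the bump $\eta$) so that the inhomogeneity $F$ sits in a fixed region near the neck--interior junction, so that the equation for the limit $H_\infty$ is literally the one defining $C_n H_n^{\mathrm{cyl}}$.
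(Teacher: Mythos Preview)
Your argument is correct, but it takes a longer route than the paper. The paper observes directly that the difference $C_nG_n^{\mathrm{cyl}}-G_n^v$ equals $C_nH_n^{\mathrm{cyl}}-e^{\frac{n}{4}s}H_n$ (the singular parts cancel once the cutoffs are chosen to agree on the neck), and this is a bounded harmonic function on the neck and interior regions. By the maximum principle its extreme values are attained at the far end of the neck, i.e.\ at $r'=-s$; there \cref{eq:scaling-along-neck:decay-rate-H_n} gives $e^{\frac{n}{4}s}H_n=\O(e^{-\frac14 s})$, and the same Fourier-mode decay argument applied to $H_n^{\mathrm{cyl}}$ along the cylindrical end gives $H_n^{\mathrm{cyl}}=\O(e^{-\frac14 s})$ at $r'=-s$. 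This yields $|v_{n0}-C_nv_{n0}^{\mathrm{cyl}}|=\O(e^{-\frac14 s})$ in one stroke, with the rate coming for free. Your compactness approach instead identifies the limit via Arzel\`a--Ascoli and a Liouville-type uniqueness on the cylindrical-end manifold (exactly the mechanism of \cref{prop:bounded-inverse}), and then imports the rate separately from the differential inequality $\frac{\del v_{n0}}{\del s}=\O(e^{-\frac14 s})$ already established. This is perfectly valid and nicely reuses the architecture of \cref{prop:bounded-inverse}, but the paper's maximum-principle argument is both shorter and self-contained: it never needs subsequences or the earlier ODE, and it gives the quantitative rate directly rather than as a by-product.
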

\begin{proof}
	Because $\Delta_{g_s} u = - \d^* \omega$ and $\omega$ is compactly supported, $v_{n0}$ and $v_{n0}^{\mathrm{cyl}}$ must satisfy
	$$
	v_{n0} = - \int_{\hat{M}} G^v_n \cdot \d^* \omega \Vol^{g_s} \text{ and } v^{\mathrm{cyl}}_n = - \int_{\hat{M}} G^{\mathrm{cyl}}_n \cdot \d^* \omega \Vol^{g_{\mathrm{cyl}}}.
	$$
	On the neck and interior region, the metrics $g_s$ and $g_{\mathrm{cyl}}$ coincide and the difference $C_m G_n^{\mathrm{cyl}} - G_n^v$ is well-defined. So we restrict our attention to these regions and consider
	\begin{align}
		\label{eq:scaling-along-neck:comparison-of-Greens-functions}
		v_{n0} - C_n \: v^{\mathrm{cyl}}_{n0} =& 
		\int_{\hat{M}} (C_n G^{\mathrm{cyl}}_n - G^v_n) \cdot \d^* \omega \Vol^{g_{\mathrm{cyl}}}.
	\end{align}
	As long as the step function $\chi$ in the construction of $G^{\mathrm{cyl}}_n$ and $G^v_n$ coincide on the neck region,
	$$
	C_n G^{\mathrm{cyl}}_n - G^v_n 
	= H^{\mathrm{cyl}}_n - e^{\frac{n}{4}s} H_n,
	$$
	which is a bounded harmonic function on the neck. By the maximum principle $C_n G^{\mathrm{cyl}}_n - G^v_n$ must attain its extreme values at the end of the neck, i.e. where the neck would have transitioned into the boundary region. By \cref{eq:scaling-along-neck:decay-rate-H_n} we know $e^{\frac{n}{4}s} H_n = \O(e^{- \frac{1}{4}s })$ near the end of the neck. This argument can be repeated for $H^{\mathrm{cyl}}_n$ to get the same result.
	
	We conclude $C_n G^{\mathrm{cyl}}_n - G^v_n = \O(e^{- \frac{1}{4}s})$. By \cref{eq:scaling-along-neck:comparison-of-Greens-functions} the same is true for $v_{n0} - C_n \: v^{\mathrm{cyl}}_{n0}$.
\end{proof}

Knowing the limiting behaviour of $u_{n0}$, we can consider the consequences for the assumptions we made in \cref{subsec:error-map}. Namely, taking account of the different rescalings, the term $u_{30}$ in Assumption \ref{asm:3} converges to $v^{\mathrm{cyl}}_{30}$ with rate $e^{-\frac{1}{4}s}$. Similarly, the term $u_{10}$ in Assumption \ref{asm:2} converges to $e^{\frac{1}{2}s} v^{\mathrm{cyl}}_{10}$.

\begin{corollary}
	\label{cor:limit:consequence-A2}
	Assume that $\omega_s$ is a smooth family of closed, $\Z_2$-antisymmetric 1-forms on $\hat{M}$ that are compactly supported on the interior region and satisfy Assumption \ref{asm:1}.
	To satisfy Assumption \ref{asm:2}, it is necessary that for for every converging subsequence $\omega_s \to \omega$, $v^{\mathrm{cyl}}_{10}(\omega, \Sigma_i) = 0$ for every connected component $\Sigma_i$ of $\Sigma$.
\end{corollary}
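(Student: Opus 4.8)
The plan is to read the corollary off from the convergence statement in the Proposition immediately preceding it, the only genuine work being to accommodate the fact that $\omega_s$ varies with $s$ while that Proposition is stated for a fixed form $\omega$.

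First I would rephrase Assumption \ref{asm:2} in the rescaled Fourier coefficient $v_{10}(s,\omega,\Sigma_i) = e^{\frac{1}{4}s}u_{10}(s,\omega,\Sigma_i)$: since $u_{10}(s,\omega_s,\Sigma_i)=0$ for all $s\gg 0$ and every connected component $\Sigma_i$, also $v_{10}(s,\omega_s,\Sigma_i)=0$ for all $s\gg 0$. Hence, given any subsequence $s_j\to\infty$ with $\omega_{s_j}\to\omega$ in the $C^{k,\alpha}$-topology in which the family lives, it suffices to prove that $v_{10}(s_j,\omega_{s_j},\Sigma_i)\to C_1\,v^{\mathrm{cyl}}_{10}(\omega,\Sigma_i)$, where $C_1=e^{-\frac{1}{2}\int_1^{R_0}\tilde r^{-1}\d r}>0$; indeed then $C_1 v^{\mathrm{cyl}}_{10}(\omega,\Sigma_i)=0$ and $C_1\neq 0$ force $v^{\mathrm{cyl}}_{10}(\omega,\Sigma_i)=0$, which is the asserted condition.

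To establish that convergence I would use that each Fourier coefficient depends linearly on $\omega$ (through the solution operator of $\Delta_{g_s}u=-\d^*\omega$) and split $v_{10}(s_j,\omega_{s_j},\Sigma_i)=v_{10}(s_j,\omega,\Sigma_i)+v_{10}(s_j,\omega_{s_j}-\omega,\Sigma_i)$. The first summand tends to $C_1 v^{\mathrm{cyl}}_{10}(\omega,\Sigma_i)$ by the preceding Proposition applied to the fixed form $\omega$. For the second summand, \cref{lem:decay-rate-fourier-modes} gives $|v_{10}(s_j,\omega_{s_j}-\omega,\Sigma_i)|=e^{\frac{1}{4}s_j}|u_{10}(s_j,\omega_{s_j}-\omega,\Sigma_i)|\le \frac{C}{I_{10}(R_0)}\|\d^*(\omega_{s_j}-\omega)\|_{C^{0,\alpha}}$ with $C$ independent of $s_j$; since $\omega_{s_j}$ and $\omega$ are supported in the fixed interior region, where $g_{s_j}$ coincides with $g$, this norm is just the ordinary H\"older norm and it tends to $0$ as $\omega_{s_j}\to\omega$. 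Adding the two contributions proves the claim, and the corollary follows. The main obstacle is precisely this exchange of the limits $s\to\infty$ and $\omega_s\to\omega$; it is rendered harmless by the uniform-in-$s$ bound of \cref{lem:decay-rate-fourier-modes}, which itself rests on the uniform bounded inverse of \cref{prop:bounded-inverse}.
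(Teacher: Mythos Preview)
Your argument is correct and follows the same route as the paper's one-line proof, which simply observes that if $v^{\mathrm{cyl}}_{10}\neq 0$ then $u_{10}$ behaves asymptotically like $e^{\frac{1}{2}s}v^{\mathrm{cyl}}_{10}$ and hence cannot vanish. You are in fact more careful than the paper: the paper's proof tacitly treats the convergence Proposition as applying directly to the varying family $\omega_s$, whereas you make the exchange of limits explicit via linearity of $\omega\mapsto v_{10}(s,\omega,\Sigma_i)$ together with the uniform-in-$s$ bound of \cref{lem:decay-rate-fourier-modes}.
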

\begin{proof}
	If $v^{\mathrm{cyl}}_{10} \not= 0$, then $u_{10}$ converges up to a subsequence to $e^{\frac{1}{2} s} v^{\mathrm{cyl}}_{10}$, which blows up and cannot be bounded in the limit.
\end{proof}

Similarly, we can study the consequences for Assumption \ref{asm:3}:
\begin{corollary}
	\label{cor:limit:consequence-A3}
	Assume that $\omega_s$ is a smooth family of closed, $\Z_2$-antisymmetric 1-forms on $\hat{M}$ that are compactly supported on the interior region and satisfy Assumption \ref{asm:1}.
	To satisfy Assumption \ref{asm:3}, it is necessary that for for every converging subsequence $\omega_s \to \omega$, $v^{\mathrm{cyl}}_{30}(\omega, \Sigma_i) \not= 0$ for every connected component $\Sigma_i$ of $\Sigma$.
	Even more if $\omega_s$ converges to $\omega$, then Assumption \ref{asm:3} is satisfied if and only if $v^{\mathrm{cyl}}_{30}(\omega, \Sigma_i) \not= 0$ for every connected component $\Sigma_i$ of $\Sigma$.
\end{corollary}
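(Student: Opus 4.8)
\noindent
The plan is to deduce \cref{cor:limit:consequence-A3} from the proposition immediately above, in the same spirit as \cref{cor:limit:consequence-A2}. After the rescaling of the cohomology class carried out in \cref{subsec:error-map}, the quantity whose uniform lower bound Assumption \ref{asm:3} demands on a component $\Sigma_i$ is governed by $v_{30}(s,\omega_s,\Sigma_i) = e^{\frac{3}{4}s}u_{30}(s,\omega_s,\Sigma_i)$, and the proposition tells us that, for a \emph{fixed} form $\omega$, one has $v_{30}(s,\omega,\Sigma_i)\to C_3\,v^{\mathrm{cyl}}_{30}(\omega,\Sigma_i)$ at rate $e^{-\frac{1}{4}s}$, where $C_3 = e^{-\frac{3}{2}\int_1^{R_0}\tilde r^{-1}\,\d r}\neq 0$. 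So the substantive work is already done; what remains is to pass from a fixed $\omega$ to the varying family $\{\omega_s\}$ and then read off the two implications.

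\noindent
First I would upgrade the convergence so that it is uniform along the family. In the proof of the proposition, the error $v_{30}(s,\omega,\Sigma_i)-C_3 v^{\mathrm{cyl}}_{30}(\omega,\Sigma_i) = \int (C_3 G^{\mathrm{cyl}}_3 - G^v_3)\,\d^*\omega\,\Vol^{g_{\mathrm{cyl}}}$ is controlled through the bound $\|C_3 G^{\mathrm{cyl}}_3 - G^v_3\|_{C^0}=\O(e^{-\frac{1}{4}s})$, which is a statement about the two Poisson kernels alone and carries no dependence on $\omega$. Since every $\omega_s$ is supported in the fixed compact interior region and $\|\d^*\omega_s\|_{C^0}$ is bounded above (and below) by Assumption \ref{asm:1}, this yields $|v_{30}(s,\omega_s,\Sigma_i)-C_3 v^{\mathrm{cyl}}_{30}(\omega_s,\Sigma_i)|=\O(e^{-\frac{1}{4}s})$ with implied constant independent of $s$. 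Combined with the fact that $\omega\mapsto v^{\mathrm{cyl}}_{30}(\omega,\Sigma_i)=-\int G^{\mathrm{cyl}}_3\,\d^*\omega\,\Vol^{g_{\mathrm{cyl}}}$ is a bounded linear functional whose integrand ranges over that same fixed compact set — hence continuous along $\omega_s\to\omega$ — I conclude $v_{30}(s,\omega_s,\Sigma_i)\to C_3 v^{\mathrm{cyl}}_{30}(\omega,\Sigma_i)$ along any convergent subsequence $\omega_s\to\omega$.

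\noindent
The two halves then follow directly. For necessity: if Assumption \ref{asm:3} holds and $\omega_{s_j}\to\omega$ is a convergent subsequence, then $v_{30}(s_j,\omega_{s_j},\Sigma_i)\to C_3 v^{\mathrm{cyl}}_{30}(\omega,\Sigma_i)$, and a sequence that is bounded below away from zero cannot have limit $0$, so (using $C_3\neq 0$) $v^{\mathrm{cyl}}_{30}(\omega,\Sigma_i)\neq 0$ for every component. For the ``even more'' statement: if $\omega_s\to\omega$ and $v^{\mathrm{cyl}}_{30}(\omega,\Sigma_i)\neq 0$ for all $i$, then $v_{30}(s,\omega_s,\Sigma_i)\to C_3 v^{\mathrm{cyl}}_{30}(\omega,\Sigma_i)\neq 0$, and since $\Sigma$ has only finitely many components there is an $s_0$ past which $|v_{30}(s,\omega_s,\Sigma_i)|\geq\tfrac12\min_i|C_3 v^{\mathrm{cyl}}_{30}(\omega,\Sigma_i)|>0$ simultaneously for all $i$, which is exactly Assumption \ref{asm:3} on the parameter range $s>s_0$. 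I do not expect a genuine obstacle here: the one point meriting care is the uniformity along $\{\omega_s\}$ of the second paragraph, which rests on the Poisson-kernel estimate being $\omega$-free together with the two-sided control of $\|\d^*\omega_s\|$ from Assumption \ref{asm:1}.
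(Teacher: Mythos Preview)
Your proposal is correct and follows precisely the route the paper intends: the paper gives no explicit proof of \cref{cor:limit:consequence-A3} at all, merely prefacing it with ``Similarly, we can study the consequences for Assumption \ref{asm:3}'' after the one-line proof of \cref{cor:limit:consequence-A2}. You have supplied the details the paper leaves implicit --- in particular the uniformity of the Poisson-kernel estimate along the family $\{\omega_s\}$ and the continuity of $\omega\mapsto v^{\mathrm{cyl}}_{30}(\omega,\Sigma_i)$ --- and these are exactly the ingredients one needs to pass from the fixed-$\omega$ convergence of the proposition to the varying-family statement.
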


\subsection{Variation of the interior metric}
In summary, in order to construct $\omega_s$, we need that $v^{\mathrm{cyl}}_{10} = 0$ and $v^{\mathrm{cyl}}_{30}\not = 0$ for every converging subsequence of $\omega_s \to \omega$ and for every connected component of $\Sigma$. Before we can construct $\omega_s$, we need to study the stability of these conditions under small variations of the metric. Namely, it turns out that

\begin{proposition}
	\label{prop:variation-metric-v-cyl}
	For any fixed $\omega$ with a non-trivial cohomology class and any positive odd number $n$, one can always perturb the metric on the neck near the interior region such that 
	$$
	\Re(v^{\mathrm{cyl}}_{n0}(\omega, \Sigma_i)) \not = 0 \quad\text{and/or}\quad \Im(v^{\mathrm{cyl}}_{n0}(\omega, \Sigma_i)) \not = 0
	$$
	for any connected component $\Sigma_i$ of $\Sigma$. 
\end{proposition}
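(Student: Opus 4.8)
The plan is to compute the first variation of $v^{\mathrm{cyl}}_{n0}(\omega,\Sigma_i)$ as a function of the metric and to show that it is a nonzero functional of the metric perturbation. That is enough: if $v^{\mathrm{cyl}}_{n0}(\omega,\Sigma_i)\neq 0$ already we are done, and otherwise moving the metric a little in a direction in which the derivative does not vanish makes it nonzero, the linear term dominating. This is the analogue of the computation of \mycite{He2022}, adapted to the cylindrical-ends setting.

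First I would set up the perturbation. Fix a component $\Sigma_i$ and let $V$ be an open region of the corresponding neck \emph{near the interior region}, chosen small enough that $\omega$ vanishes on $V$ and that $G^{\mathrm{cyl}}_n$ from \cref{eq:limit:def-G^cyl} is harmonic on $V$ (the latter only needs the cut-off $\chi$ to be chosen with $\operatorname{supp}\d\chi$ disjoint from $V$), while leaving the rest of the neck flat so that condition 2 of \cref{thm:main-theorem} is untouched. For a $\Z_2$-invariant symmetric $2$-tensor $h$ supported in $V$, set $g_t=g_{\mathrm{cyl}}+t\,h$. Since $\operatorname{supp}h\cap\operatorname{supp}\omega=\emptyset$, the source $-\d^*_{g_t}\omega$ is independent of $t$, so differentiating $\Delta_{g_t}v_t=-\d^*_{g_t}\omega$ at $t=0$ gives $\Delta_{g_{\mathrm{cyl}}}\dot v=-L_h(v^{\mathrm{cyl}})$, where $L_h$ is the $t$-derivative of $\Delta_{g_t}$ at $0$, a differential operator with coefficients supported in $V$; thus $\dot v$ is the unique bounded $\Z_2$-antisymmetric solution (uniqueness by the maximum principle, as in \cref{prop:bounded-inverse}) of an equation with compactly supported source, and applying the Poisson map $G^{\mathrm{cyl}}_n$ gives
\[
\dot v_{n0}\;=\;-\int_{V}G^{\mathrm{cyl}}_n\,L_h(v^{\mathrm{cyl}})\,\Vol^{g_{\mathrm{cyl}}}.
\]
Next I would make this explicit: by the standard variation formula for the Laplace--Beltrami operator, integration by parts (no boundary terms since $h$ is compactly supported), and harmonicity of $v^{\mathrm{cyl}}$ on $V$, the Hessian terms cancel and one is left with the first-order, pointwise bilinear expression
\[
\dot v_{n0}\;=\;\int_{V}\big\langle h,\;\d v^{\mathrm{cyl}}\odot\d G^{\mathrm{cyl}}_n-\tfrac12\langle\d v^{\mathrm{cyl}},\d G^{\mathrm{cyl}}_n\rangle\,g_{\mathrm{cyl}}\big\rangle\,\Vol^{g_{\mathrm{cyl}}}.
\]

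It then remains to show that $h\mapsto\dot v_{n0}$ is not identically zero. Suppose it were. Then the $\Z_2$-invariant symmetric $2$-tensor $S:=\d v^{\mathrm{cyl}}\odot\d G^{\mathrm{cyl}}_n-\tfrac12\langle\d v^{\mathrm{cyl}},\d G^{\mathrm{cyl}}_n\rangle g_{\mathrm{cyl}}$ vanishes on $V$; taking a trace forces $\langle\d v^{\mathrm{cyl}},\d G^{\mathrm{cyl}}_n\rangle\equiv 0$ on $V$, whence $\d v^{\mathrm{cyl}}\odot\d G^{\mathrm{cyl}}_n\equiv 0$, and since a symmetric product of covectors vanishes only when a factor does, at each point of $V$ either $\d v^{\mathrm{cyl}}=0$ or $\d G^{\mathrm{cyl}}_n=0$. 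The two closed sets $\{\d v^{\mathrm{cyl}}=0\}$ and $\{\d G^{\mathrm{cyl}}_n=0\}$ cover $V$, so by Baire one has nonempty interior, and I would eliminate both alternatives by unique continuation. If $\{\d G^{\mathrm{cyl}}_n=0\}$ does, then the harmonic function $G^{\mathrm{cyl}}_n$ is locally, hence globally, constant on its domain of harmonicity, contradicting its asymptotics $\tfrac{1}{8n\pi^2}e^{-nr'/4}e^{-in\phi}$ on the cylindrical end. Hence $\{\d v^{\mathrm{cyl}}=0\}$ has nonempty interior, so $v^{\mathrm{cyl}}$ is constant on the component of $\hat M\setminus\operatorname{supp}\omega$ containing $V$; that component also contains the cylindrical end over $\Sigma_i$, where the Fourier expansion of $v^{\mathrm{cyl}}$ has no constant mode ($n$ odd), so the constant is $0$. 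Then the harmonic $1$-form $\omega+\d v^{\mathrm{cyl}}$ (closed, and coclosed by $\Delta v^{\mathrm{cyl}}=-\d^*\omega$), which represents $[\omega]$, vanishes on an open set and hence everywhere, forcing $[\omega]=0$ and contradicting the hypothesis. So some $h$ gives $\dot v_{n0}\neq 0$, proving the proposition for $\Sigma_i$, and for every component by treating them one at a time. I expect the main obstacle to be the Laplace-variation/integration-by-parts computation that produces the clean tensor $S$, together with the geometric bookkeeping needed to keep both $v^{\mathrm{cyl}}$ and $G^{\mathrm{cyl}}_n$ harmonic on the perturbation region and to make the unique-continuation steps go through (connectedness of the relevant subsets of $\hat M$ and the growth of $G^{\mathrm{cyl}}_n$ at the cylindrical ends).
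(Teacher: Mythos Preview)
Your proof is correct and follows essentially the same route as the paper's: both compute the first variation of $v^{\mathrm{cyl}}_{n0}$ via the formula of \mycite{He2022}, arrive at the same trace-adjusted symmetric tensor (your $S$ is the paper's $\hat S$), deduce the pointwise dichotomy $\d v^{\mathrm{cyl}}=0$ or $\d G=0$, and finish with unique continuation for the harmonic $1$-form $\omega+\d v^{\mathrm{cyl}}$ against the hypothesis $[\omega]\neq 0$. The only cosmetic differences are that the paper treats $G=\Re(G^{\mathrm{cyl}}_n)$ and $G=\Im(G^{\mathrm{cyl}}_n)$ separately while you keep $G$ complex, and that the paper packages the ``$S=0\Rightarrow \d v^{\mathrm{cyl}}=0$ where $\d G\neq 0$'' step as a standalone lemma (\cref{lem:variation:lemma2.2-he}); your covering argument to locate an open set where $\d G\neq 0$ is in fact cleaner than the paper's appeal to ``isolated'' critical points.
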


This Proposition implies that Assumption \ref{asm:3} is generically be satisfied, while \ref{asm:2} will fail. To circumvent this issue, we have to consider multiple elements in $H^1_-(\hat{M})$ and take suitable linear combinations. The proof of the linear independence will be very similar to the proof of \cref{prop:variation-metric-v-cyl} and so we prove this proposition first.

Repeating the argument in the previous section, let $U$ be an open subset on the manifold with cylindrical end $(\hat{M}, g_{\mathrm{cyl}})$, supported on the neck, close to the interior region. Let $T \in \Gamma(\operatorname{Sym}^2 (T^*M))$ be compactly supported on $\eta$. We consider the metric
$g_t = g_{\mathrm{cyl}} + t \cdot T$.
Let $v^{\mathrm{cyl}}$ be a solution of $\Delta_{g_{t}} v^{\mathrm{cyl}} = - \d^* \omega$ and let $v_{nm}^{\mathrm{cyl}}$ be the expansion of $v^{\mathrm{cyl}}$ on the cylindrical end.
To prove \cref{prop:variation-metric-v-cyl}, we need to show that there is a $T$ such that $\left.\frac{\del v^{\mathrm{cyl}}_{n0}}{\del t}\right|_{t=0} \not = 0$.
In \cref{eq:scaling-along-neck:vary-metric-2}, we calculated this derivative in one specific case. In Equation (6) of \cite{He2022}, this formula is given for the general\footnote{Actually, \mycite{He2022} assumed some additional $\Z_3$ symmetry on the metric, but his calculation does not depend on this symmetry.} case, and in Riemann normal coordinates it reads
\begin{equation}
	\Delta_{g_{s}} \left(
 	\left.	\frac{\del v^{\mathrm{cyl}}}{\del t} \right|_{t=0}
	\right)
	= - \sum_{ij} T_{ij} \frac{\del^2 v^{\mathrm{cyl}}}{\del x_i \del x_j}
	- \frac{\del v^{\mathrm{cyl}}}{\del x_i} \frac{\del T_{ij}}{\del x_j}
	+ \frac{1}{2} \sum_i \frac{\del u}{\del x_i} 
	\frac{\del \Tr(T)}{\del x_i}.
\end{equation}
Using basis-independent notation, 
\begin{equation}
	\Delta_{g_{s}} \left(
 	\left.	\frac{\del v^{\mathrm{cyl}}}{\del t} \right|_{t=0}
	\right)
	= \d^* (T(\nabla v^{\mathrm{cyl}}, \ldots)) + \frac{1}{2} \langle \d v^{\mathrm{cyl}}, \d \Tr(T) \rangle.
\end{equation}
Following the proof of Lemma 2.1 in \cite{He2022}, we can write for any smooth $\Z_2$-antisymmetric function $G$,
\begin{align}
	\label{eq:derivative-of-v-cyl_n}
	\int_{\hat{M}} G \Delta_{g_{s}} \left(
		\left.	\frac{\del v^{\mathrm{cyl}}}{\del t} \right|_{t=0}
	   \right) \Vol^{g_{\mathrm{cyl}}}
	   =& -\int_{\hat{M}} G \d * T(\nabla u, \ldots) + \frac{1}{2}\int_{\hat{M}} G \d \Tr(T) \wedge * \d u.
\end{align}

\begin{remark}
	If we use a Poisson map like $G = \Re(G^{\mathrm{cyl}}_{n})$, \cref{eq:derivative-of-v-cyl_n} gives us the variation of the Fourier coefficients of $v^{\mathrm{cyl}}$ along $T$. In the next section we need to change $G$ into something different, so we just assume $G$ is antisymmetric and smooth. 
\end{remark}

Because $T$ has compact support, we can apply Stokes theorem and get
\begin{align}
	\int_{\hat{M}} G \Delta_{g_{s}} & \left(
		\left.	\frac{\del v^{\mathrm{cyl}}}{\del t} \right|_{t=0}
	   \right) \Vol^{g_{\mathrm{cyl}}} \\
	   =& \int_{\hat{M}} \d G \wedge * T(\nabla v^{\mathrm{cyl}}, \ldots)
	   - \frac{1}{2} G \Tr(T) \wedge \d * \d v^{\mathrm{cyl}} 
	   - \frac{1}{2}\Tr(T) \d G \wedge * \d v^{\mathrm{cyl}}.
\end{align}
Using that the support of $\Delta u$ and $T$ are disjoint, we can simplify this to
\begin{align}
	\int_{\hat{M}} G \Delta_{g_{s}} \left(
		\left.	\frac{\del v^{\mathrm{cyl}}}{\del t} \right|_{t=0}
	   \right) \Vol^{g_{\mathrm{cyl}}}
	   =& \int_{\hat{M}} T^{\flat}(\d G, \d v^{\mathrm{cyl}})
	   - \frac{1}{2} \Tr(T) \langle \d G, \d v^{\mathrm{cyl}}  \rangle \Vol^{g_{\mathrm{cyl}}}, 
\end{align}
where $T^\flat \in \Gamma(\operatorname{Sym}^2 TM)$ is the dual of $T$.
In \cite{He2022}, He came to the same conclusion and rewrote this identity in terms of traces. Namely, he defined $S = S_G := \frac{1}{2} (\d G \otimes \d u + \d u \otimes \d G) \in \Gamma(\operatorname{Sym}^2 T^*M)$, which had the property
\begin{align}
	\label{eq:variation:formula-he-traces}
	\int_{\hat{M}} G \Delta_{g_{s}} \left(
		\left.	\frac{\del v^{\mathrm{cyl}}}{\del t} \right|_{t=0}
	   \right) \Vol^{g_{\mathrm{cyl}}}
	   =& \int_{\hat{M}} (\Tr(TS) - \frac{1}{2}\Tr(T)\Tr(S))\Vol^{g_{\mathrm{cyl}}}.
\end{align}
Following the argument of Lemma 2.2 in \cite{He2022}, we show
\begin{lemma}
	\label{lem:variation:lemma2.2-he}
	Let $U$ be an open neighbourhood in $\hat{M}$, disjoint from the support of $\d^* \omega$. Suppose that for all $T$ supported on $U$,
	\begin{equation}
		\label{eq:variation:lemma2.2-he}
		  \int_{\hat{M}} (\Tr(TS) - \frac{1}{2}\Tr(T)\Tr(S))\Vol^{g_{\mathrm{cyl}}} = 0.
	\end{equation}
	Then, wherever $\d G \not= 0$ on $U$, we have $\d v^{\mathrm{cyl}} = 0$.
\end{lemma}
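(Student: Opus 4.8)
The plan is to read \cref{eq:variation:lemma2.2-he} as a pointwise constraint on the tensor $S$ and then extract the conclusion by elementary linear algebra on symmetric $2$-tensors. First I would rewrite the integrand by using $g_{\mathrm{cyl}}$ to identify the trace of a product of symmetric $2$-tensors with their pointwise inner product, so that
$$
\Tr(TS) - \tfrac12 \Tr(T)\Tr(S) = \left\langle T, \ S - \tfrac12 \Tr(S)\, g_{\mathrm{cyl}} \right\rangle .
$$
Since the integral in \cref{eq:variation:lemma2.2-he} vanishes for every smooth symmetric $2$-tensor $T$ compactly supported in $U$, the fundamental lemma of the calculus of variations---applied by taking $T$ to be a bump function times a suitable constant symmetric tensor concentrated near an arbitrary point of $U$---forces the pointwise identity $S = \tfrac12 \Tr(S)\, g_{\mathrm{cyl}}$ throughout $U$.

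Next I would substitute the definition $S = S_G = \tfrac12(\d G \otimes \d v^{\mathrm{cyl}} + \d v^{\mathrm{cyl}} \otimes \d G)$, for which $\Tr(S) = \langle \d G, \d v^{\mathrm{cyl}}\rangle$, obtaining on $U$
$$
\d G \otimes \d v^{\mathrm{cyl}} + \d v^{\mathrm{cyl}} \otimes \d G = \langle \d G, \d v^{\mathrm{cyl}}\rangle\, g_{\mathrm{cyl}} .
$$
Taking the $g_{\mathrm{cyl}}$-trace of both sides and using $\dim \hat M = 3$ gives $2\langle \d G, \d v^{\mathrm{cyl}}\rangle = 3\langle \d G, \d v^{\mathrm{cyl}}\rangle$, hence $\langle \d G, \d v^{\mathrm{cyl}}\rangle = 0$ everywhere on $U$, and the displayed identity collapses to $\d G \otimes \d v^{\mathrm{cyl}} + \d v^{\mathrm{cyl}} \otimes \d G = 0$ on $U$.

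Finally I would argue pointwise at a point of $U$ where $\d G \neq 0$: a symmetrised product of two cotangent vectors vanishes only if one of the factors is zero. If $\d v^{\mathrm{cyl}}$ were a nonzero multiple $\lambda\,\d G$ the product would be $2\lambda\,\d G \otimes \d G \neq 0$, while if $\d G$ and $\d v^{\mathrm{cyl}}$ were linearly independent one could evaluate on tangent vectors dual to them to get a nonzero value; in either case this contradicts the vanishing, so $\d v^{\mathrm{cyl}} = 0$ at that point. I do not expect a serious obstacle here: the only steps needing care are the legitimacy of testing against arbitrary compactly supported symmetric tensors in $U$ (standard) and the bookkeeping of the trace and linear-algebra manipulations, and the argument is the direct analogue of Lemma 2.2 in \cite{He2022}.
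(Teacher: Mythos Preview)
Your proof is correct and follows essentially the same route as the paper: both show that the integral condition forces $S - \tfrac12\Tr(S)\,g_{\mathrm{cyl}} = 0$ pointwise on $U$, then take a trace in dimension $3$ to get $S = 0$, and finish by elementary linear algebra at a point where $\d G \neq 0$. The paper carries out the first step by testing against the specific tensor $T = \chi^2\bigl(S - \tfrac12\Tr(S)\operatorname{Id}\bigr)$ and recognising the Hilbert--Schmidt norm (and in passing verifies that this $T$ is $\Z_2$-invariant, so that $g_{\mathrm{cyl}} + tT$ is a legitimate metric on the double cover), which is exactly your fundamental-lemma step made explicit.
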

\begin{proof}
	Suppose that at $p \in U$, we have $\d G \not = 0$.
	Let $\hat{S} = S - \frac{1}{2} \Tr(S) \operatorname{Id}$ and let $\chi$ be a bump function centred at $p$. Let $T = \chi^2 \hat{S}$. Because $\d v^{\mathrm{cyl}}$ and $\d G$ are $\Z_2$-antisymmetric, $S$ and $\hat{S}$ are invariant under the $\Z_2$-action of the double cover. Hence, $g_t = g_{\mathrm{cyl}} + t \cdot T$ is $\Z_2$-invariant. 
	
	\cref{eq:variation:lemma2.2-he} simplifies to 
	$$
	\int_{\hat{M}} \Tr(\chi^2\hat{S}^2)= 0.
	$$
	Notice that this is the Hilbert--Schmidt norm of $\chi^2 \hat{S}$ and so $\hat{S} = 0$ in a neighbourhood of $p$. Even more
	$
	\Tr(S)  = \Tr(\hat{S}) + \frac{3}{2} \Tr(S) = \frac{3}{2} \Tr(S),
	$
	which implies $S =0$ in a neighbourhood of $p$.
	
	Because $\d G \not = 0$ at $p$, there is a coordinate chart on $\hat{M}$ centred at $p$, such that $\frac{\del G}{\del x_1} \not = 0$ at $p$. On this coordinate chart,
	$$
	S(\del x_1, \del x_1) = \frac{\del G}{\del x_1} \frac{\del v^{\mathrm{cyl}}}{\del x_1} = 0
	$$
	and hence $\frac{\del v^{\mathrm{cyl}}}{\del x_1} = 0$. Moreover, for any $i \in \{2,3\}$,
	\begin{align}
		S(\del x_1, \del x_i)
		= \frac{1}{2}\frac{\del G}{\del x_1} \frac{\del v^{\mathrm{cyl}}}{\del x_i} + \frac{1}{2}\frac{\del G}{\del x_i} \frac{\del v^{\mathrm{cyl}}}{\del x_1} 
		= \frac{1}{2}\frac{\del G}{\del x_1} \frac{\del v^{\mathrm{cyl}}}{\del x_i} = 0.
	\end{align}
	This concludes $\d v^{\mathrm{cyl}} = 0$ in a neighbourhood of $p$.
\end{proof}

We have now all the ingredients to prove \cref{prop:variation-metric-v-cyl}.
\begin{proof}[Proof of \cref{prop:variation-metric-v-cyl}]
	Suppose that this proposition is false. Let $U$ be an open neighbourhood on the neck near the interior region and let $G = \Re(G^{\mathrm{cyl}}_n)$. Using \cref{eq:variation:formula-he-traces},
	$$
	\left.\frac{\del }{\del t}\right|_{t=0} \Re(v^{\mathrm{cyl}}_n) = \int_{\hat{M}} (\Tr(TS) - \frac{1}{2}\Tr(T)\Tr(S))\Vol^{g_{\mathrm{cyl}}}.
	$$
	By assumption this is zero for any $T$ supported on $U$.
	Recall that $\Re(G^{\mathrm{cyl}}_n)$ is a non-constant harmonic function. Due to the maximum principle $\d G$ can only vanish at isolated points. So there is a whole neighbourhood in $U$ such that $\d G \not = 0$. This implies that $\d v^{\mathrm{cyl}} = 0$ on this neighbourhood.
	
	We conclude $\omega + \d v^{\mathrm{cyl}}$ will be a harmonic 1-form on $(M, g_{\mathrm{cyl}})$ that vanishes in some open neighbourhood. According to the unique continuation theorem by \mycite{Aronszajn1957}, $\omega + \d v^{\mathrm{cyl}}$ should vanish everywhere. This contradicts the fact that $[\omega] \in H^1_-(\hat{M})$ is non-zero. Repeating this argument for $G = \Im(G^{\mathrm{cyl}}_n)$ concludes the proof.
\end{proof}

\subsection{Proof of the assumptions}
In \cref{prop:variation-metric-v-cyl} we found that we can perturb the metric on the interior such that $v^{\mathrm{cyl}}_{n0} \not = 0$ for any $n$, for any element in $H^1_-(\hat{M})$ and any connected component of $\Sigma$. Hence if $H^1_-(\hat{M})$ is large enough, then one can find a basis $\{ [\omega_1^\Re], [\omega_1^\Im], \ldots, [\omega_p^\Re], [\omega_p^\Im]\}$ of a subspace $E \subset H^1_-(\hat{M})$ such that $\Re(v^{\mathrm{cyl}}_{10}(\omega_k^\Re, \Sigma_k)) = \Im(v^{\mathrm{cyl}}_{10}(\omega_k^\Im, \Sigma_k)) = 1$ for any connected component $\Sigma_k$ of $\Sigma$. If in addition we can show that this basis controls the $v^{\mathrm{cyl}}_{10}$-terms linearly independently, then the elements in the complement of $E$ have $v^{\mathrm{cyl}}_{10} = 0$. According to \cref{cor:limit:consequence-A2} this is required to satisfy Assumption \ref{asm:2}.

The proof of the linear independence will be done by induction over the number of connected components of $\Sigma$. As matrix calculations can be a bit tedious, we will split this proof into two lemmas: first, we will prove the simple case where $\Sigma$ has a single connected component. Secondly, we explain why the general case can be proved in a similar manner. After this, we show how to control the $v^{\mathrm{cyl}}_{30}$ independently. Finally, we proof that Assumptions \ref{asm:1}, \ref{asm:2} and \ref{asm:3} can all be satisfied under the assumptions of \cref{thm:main-theorem}.

\begin{lemma}
	\label{lem:proof-assumptions-connected-case}
	Assume $\Sigma$ is connected.
	Let $E \subset H^1_-(\hat{M})$ be a $2$ dimensional subspace. For a generic metric, there is a basis $\{ [\omega^\Re], [\omega^\Im]\}$ of $E$ such that 
	$$
	v^{\mathrm{cyl}}_{10}(\omega^\Re, \Sigma) = 1
	\quad \text{and} \quad
	v^{\mathrm{cyl}}_{10}(\omega^\Im, \Sigma) = i
	$$
\end{lemma}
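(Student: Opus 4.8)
The goal is to produce, for a generic metric, a basis of $E$ detecting the coordinates $1$ and $i$ of $\C$ under the $\R$-linear functional $[\omega]\mapsto v^{\mathrm{cyl}}_{10}(\omega,\Sigma)$, so the natural plan is to show that this functional restricts to an $\R$-linear isomorphism $L_g\colon E\to\C$ for a generic metric. First I would record that $[\omega]\mapsto v^{\mathrm{cyl}}_{10}(\omega,\Sigma)$ descends to $H^1_-(\hat M)$: changing the representative by $\d\psi$ with $\psi$ compactly supported changes the (unique) bounded solution of $\Delta v^{\mathrm{cyl}}=-\d^*\omega$ by $-\psi$, which does not affect the asymptotic Fourier coefficient; and it is $\R$-linear because the equation is. Fixing a basis $[\omega_1],[\omega_2]$ of $E$, let $M(g)$ be the real $2\times2$ matrix of $L_g|_E$ in this basis and the basis $\{1,i\}$ of $\C$, so that $\det M(g)=\Im\big(\overline{v^{\mathrm{cyl}}_{10}(\omega_1,\Sigma)}\,v^{\mathrm{cyl}}_{10}(\omega_2,\Sigma)\big)$. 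Since the weighted Green's operator of the manifold with cylindrical ends $(\hat M,g_{\mathrm{cyl}})$ depends continuously on the metric (standard weighted Fredholm theory, as in the references of \cref{subsec:limit-of-fourier-modes}), $g\mapsto\det M(g)$ is continuous, so the good set $\{\det M\neq0\}$ is open; it remains to prove it is dense, i.e.\ that $\{\det M=0\}$ has empty interior, and \emph{generic} will then mean "in this open dense set" (the perturbations below being supported in the interior, so preserving the model structure near $\Sigma$).

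So suppose $\det M\equiv0$ on some open set $\mathcal O$ of metrics. If some $g_0\in\mathcal O$ had $M(g_0)=0$, then $v^{\mathrm{cyl}}_{10}(\omega_1,\cdot)$ would vanish on a neighbourhood of $g_0$, contradicting \cref{prop:variation-metric-v-cyl} applied to $n=1$ and the nonzero class $[\omega_1]$. Hence every $g\in\mathcal O$ has $\operatorname{rank}M(g)=1$, and after shrinking $\mathcal O$ we may assume $v_1:=v^{\mathrm{cyl}}_{10}(\omega_1,\cdot)\neq0$ on $\mathcal O$ and write $v^{\mathrm{cyl}}_{10}(\omega_2,\cdot)=\lambda v_1$ for a smooth real function $\lambda$ on $\mathcal O$. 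Fix $g_0\in\mathcal O$ and set $w:=\omega_2-\lambda(g_0)\,\omega_1$, so that $[w]\neq0$ in $H^1_-(\hat M)$ and $v^{\mathrm{cyl}}_{10}(w,g_0)=0$. Along any path $g_t$ through $g_0$, a short computation using the $\R$-linearity of $\omega\mapsto v^{\mathrm{cyl}}_{10}(\omega,\cdot)$ shows that the contributions of $\dot v_1$ to $\frac{\del}{\del t}\det M(g_t)$ cancel, leaving
\[
	\left.\frac{\del}{\del t}\right|_{t=0}\det M(g_t)=\Im\big(\overline{v_1(g_0)}\cdot\dot w\big),\qquad \dot w:=\left.\frac{\del}{\del t}\right|_{t=0}v^{\mathrm{cyl}}_{10}(w,g_t).
\]
Writing $v_1(g_0)=\rho e^{i\beta}$ with $\rho>0$, this equals $\rho\left.\frac{\del}{\del t}\right|_{t=0}\Re\!\big(e^{-i(\beta+\pi/2)}v^{\mathrm{cyl}}_{10}(w,g_t)\big)$.

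The crux is to make this nonzero by a metric perturbation supported in an open set $U$ on the neck near the interior region, disjoint from the supports of $\d^*\omega_1,\d^*\omega_2$. Take $g_t=g_{\mathrm{cyl}}+tT$ with $T$ supported in $U$ and apply the variation identity \cref{eq:variation:formula-he-traces} with $\omega$ taken to be $w$ — so that now $v^{\mathrm{cyl}}$ denotes the bounded solution of $\Delta v^{\mathrm{cyl}}=-\d^*w$ and $v^{\mathrm{cyl}}_{10}=v^{\mathrm{cyl}}_{10}(w,\cdot)$ — and with the Poisson map $G:=\Re\!\big(e^{-i(\beta+\pi/2)}G^{\mathrm{cyl}}_1\big)$; by \cref{eq:limit:def-G^cyl} this $G$ is a non-constant, $\Z_2$-antisymmetric harmonic function on $(\hat M,g_{\mathrm{cyl}})$ whose pairing against $-\d^*w$ returns exactly $\Re\!\big(e^{-i(\beta+\pi/2)}v^{\mathrm{cyl}}_{10}\big)$, so
\[
	\left.\frac{\del}{\del t}\right|_{t=0}\Re\!\big(e^{-i(\beta+\pi/2)}v^{\mathrm{cyl}}_{10}(w,g_t)\big)=\int_{\hat M}\big(\Tr(TS_G)-\tfrac{1}{2}\Tr(T)\Tr(S_G)\big)\Vol^{g_{\mathrm{cyl}}},
\]
where $S_G=\tfrac12(\d G\otimes\d v^{\mathrm{cyl}}+\d v^{\mathrm{cyl}}\otimes\d G)$. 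If the left-hand side vanished for every $T$ supported in $U$, \cref{lem:variation:lemma2.2-he} would force $\d v^{\mathrm{cyl}}=0$ wherever $\d G\neq0$ on $U$; since $G$ is non-constant harmonic, $\{\d G\neq0\}$ is open and dense in $U$, so $\d v^{\mathrm{cyl}}=0$ on a nonempty open set, whence $w+\d v^{\mathrm{cyl}}$ is a harmonic $1$-form on $(M,g_{\mathrm{cyl}})$ representing $[w]$ and vanishing on an open set — hence identically zero by \mycite{Aronszajn1957}, contradicting $[w]\neq0$. Therefore some admissible $T$ makes $\frac{\del}{\del t}\det M(g_t)|_{t=0}\neq0$, contradicting $\det M\equiv0$ on $\mathcal O$.

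This proves $\{\det M\neq0\}$ is open and dense, so for a generic metric $L_g|_E$ is an $\R$-linear isomorphism $E\to\C$; taking $\omega^\Re$ and $\omega^\Im$ to be $\Z_2$-antisymmetric, interior-supported representatives of the classes $(L_g|_E)^{-1}(1)$ and $(L_g|_E)^{-1}(i)$ gives a basis of $E$ with $v^{\mathrm{cyl}}_{10}(\omega^\Re,\Sigma)=1$ and $v^{\mathrm{cyl}}_{10}(\omega^\Im,\Sigma)=i$. The main obstacle is the third paragraph: one must verify that the twisted Poisson map $G=\Re(e^{-i(\beta+\pi/2)}G^{\mathrm{cyl}}_1)$ is genuinely a non-constant $\Z_2$-antisymmetric harmonic function extracting precisely $\Re(e^{-i(\beta+\pi/2)}(\cdot))$ of the relevant Fourier mode, and keep straight which rescaled quantity is paired with which Poisson map so that \cref{lem:variation:lemma2.2-he} and the unique continuation theorem apply; the continuous dependence of the cylindrical-end Green's operator on the metric, needed for openness of the good set, is routine.
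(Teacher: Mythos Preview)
Your approach is essentially identical to the paper's: reduce to showing the $2\times2$ matrix $M(g)$ (the paper's $V$) is generically invertible, assume it is singular on an open set, normalise so that $v^{\mathrm{cyl}}_{10}$ of one class vanishes, and compute the variation of $\det M$ via \cref{eq:variation:formula-he-traces} using a rotated Poisson map; your $G=\Re\!\big(e^{-i(\beta+\pi/2)}G^{\mathrm{cyl}}_1\big)$ is, up to the positive scalar $\rho$, exactly the paper's $G=\Im(G^{\mathrm{cyl}}_1)-c\,\Re(G^{\mathrm{cyl}}_1)$, and the contradiction via \cref{lem:variation:lemma2.2-he} and unique continuation is the same.

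One small slip: from $\det M\equiv0$ on $\mathcal O$ and $M(g_0)=0$ at a \emph{single} point you cannot conclude that $v^{\mathrm{cyl}}_{10}(\omega_1,\cdot)$ vanishes on a \emph{neighbourhood} of $g_0$. The easy fix is to use \cref{prop:variation-metric-v-cyl} directly to say that $\{v^{\mathrm{cyl}}_{10}(\omega_1,\cdot)\neq0\}$ is open and dense, then shrink $\mathcal O$ into that set so that $\operatorname{rank}M=1$ throughout; the rest of your argument then goes through unchanged.
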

\begin{proof}
	Consider the map $V \colon E \to \R^2$, defined by 
	$$
	V([\omega]) = \begin{pmatrix}
		\Re(v^{\mathrm{cyl}}_{10}(\omega, \Sigma))\\ \Im(v^{\mathrm{cyl}}_{10}(\omega, \Sigma))
	\end{pmatrix}.$$
	If $\{[\omega_1], [\omega_2]\}$ is a basis of $E$, then $V$ can be written as
	$$
	\begin{pmatrix}
		\Re(v^{\mathrm{cyl}}_{10}(\omega_1, \Sigma)) & \Re(v^{\mathrm{cyl}}_{10}(\omega_2, \Sigma)) \\ \Im(v^{\mathrm{cyl}}_{10}(\omega_1, \Sigma)) & \Im(v^{\mathrm{cyl}}_{10}(\omega_2, \Sigma))
	\end{pmatrix}.$$
	If the determinant of $V$ is not zero, then we can find a basis of $E$ such that $V$ is the identity matrix, which proves the lemma. So assume that the determinant is zero and the theorem is false.

	To simplify our calculations, we pick a basis $\{[\omega_1], [\omega_2]\}$ of $E$ and change it as follows:	By \cref{prop:variation-metric-v-cyl}, one can perturb the metric such that $\Re(v^{\mathrm{cyl}}_{10}(\omega_1)) \not = 0$. Hence we can rescale $\omega_1$ such that $\Re(v^{\mathrm{cyl}}_{10}(\omega_1)) = 1$. Because $[\omega_2]$ is linearly independent of $[\omega_1]$, we can choose $[\omega_2]$ such that $\Re(v^{\mathrm{cyl}}_{10}(\omega_2, \Sigma)) = 0$. With these choices, the determinant of $V$ simplifies to $\Im(v^{\mathrm{cyl}}_{10}(\omega_2, \Sigma))$. So by assumption, $v^{\mathrm{cyl}}_{10}(\omega_2, \Sigma) = 0$.
	
	Next we consider the variation of the determinant under small variations of the metric. Using the basis $\{[\omega_1], [\omega_2]\}$ of $E$, the derivative of $V$ simplifies to
	$$
	\left.\frac{\del }{\del t}\right|_{t=0} \det(V) = \left.\frac{\del }{\del t}\right|_{t=0} \Im(v^{\mathrm{cyl}}_{10}(\omega_2, \Sigma)) - \Im(v^{\mathrm{cyl}}_{10}(\omega_1, \Sigma)) \left.\frac{\del }{\del t}\right|_{t=0} \Re(v^{\mathrm{cyl}}_{10}(\omega_2, \Sigma)).
	$$
	Repeating the argument of last section, let $c = \Im(v^{\mathrm{cyl}}_{10}(\omega_1, \Sigma))$, let $v^{\mathrm{cyl}}$ be a solution of $\Delta_{g_{t}} v^{\mathrm{cyl}} = - \d^* \omega_2$ and let $G = \Im(G^{\mathrm{cyl}}_1) - c \cdot \Re(G^{\mathrm{cyl}}_1)$.
	Using \cref{eq:variation:formula-he-traces},
	$$
	\left.\frac{\del }{\del t}\right|_{t=0} \det(V) = \int_{\hat M} (\Tr(TS) - \Tr(T) \Tr(S))\Vol^{g_{\mathrm{cyl}}},
	$$
	where $S := \frac{1}{2} (\d G \otimes \d v^{\mathrm{cyl}} + \d v^{\mathrm{cyl}} \otimes \d G)$.
	By \cref{lem:variation:lemma2.2-he}, $S$ vanishes on the neck near the interior region. Using the proof of \cref{prop:variation-metric-v-cyl}, one can show that $G$ is constant everywhere. This is false, because \cref{eq:limit:def-G^cyl} implies
	$$
	G = \frac{1}{8 \pi^2} e^{ - \frac{n}{4} r'} \left( \cos(\phi) - c \cdot \sin(\phi) \right) + \O(1)
	$$
	on the cylindrical end.
	Therefore, we have reached a contradiction.
\end{proof}
\begin{lemma}
	\label{lem:proof-assumptions-non-connected-case}
	Let $p$ be the number of connected components of $\Sigma$ and let $E \subset H^1_-(\hat{M})$ be a $2p$-dimensional subspace. For a generic metric, there is a basis $\{ [\omega_1^\Re], [\omega_1^\Im], \ldots, [\omega_p^\Re], [\omega_p^\Im]\}$ of $E$ such that 
	$$
	v^{\mathrm{cyl}}_{10}(\omega_k^\Re, \Sigma_l) = \begin{cases}
		1 & k = l, \\
		0 & k \not= l,
	\end{cases}
	\quad \text{and} \quad
	v^{\mathrm{cyl}}_{10}(\omega_k^\Im, \Sigma_l) = \begin{cases}
		i & k = l, \\
		0 & k \not= l.
	\end{cases}
	$$
\end{lemma}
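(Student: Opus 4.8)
The plan is to reduce the lemma to showing that, for a generic interior metric, the $\R$-linear map
$$
V\colon E \longrightarrow \C^{p},\qquad V([\omega])=\bigl(v^{\mathrm{cyl}}_{10}(\omega,\Sigma_{1}),\dots,v^{\mathrm{cyl}}_{10}(\omega,\Sigma_{p})\bigr),
$$
is an isomorphism; note $\dim_{\R}E=2p=\dim_{\R}\C^{p}$. Given this, the required basis $\{[\omega_{k}^{\Re}],[\omega_{k}^{\Im}]\}$ is simply the preimage under $V$ of the standard real basis $e_{1},ie_{1},\dots,e_{p},ie_{p}$ of $\C^{p}$. Since $v^{\mathrm{cyl}}$ depends real-analytically on the metric, the determinant $D(g)$ of $V_{g}$ in fixed bases is real-analytic on the connected space of admissible interior metrics, so it suffices to exhibit one metric at which $V$ is an isomorphism; then $\{D\neq 0\}$ is open and dense. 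Equivalently, with $k:=\max_{g}\operatorname{rank}V_{g}$, it is enough to prove $k=2p$.

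First I would fix a metric $g_{0}$ realising the maximal rank $k$. Lower semicontinuity of rank gives $\operatorname{rank}V_{g}\equiv k$ near $g_{0}$, with $I_{g}:=\operatorname{im}V_{g}\subseteq\C^{p}$ varying real-analytically there. Suppose for contradiction $k\le 2p-1$, so $I_{g_{0}}\subsetneq\C^{p}$. Choose a real-analytic family $\lambda_{g}\in(\C^{p})^{*}\setminus\{0\}$ with $\lambda_{g}|_{I_{g}}=0$, written $\lambda_{g}(z)=\sum_{l}\Re(\overline{c_{l}(g)}\,z_{l})$ with $(c_{l}(g_{0}))_{l}\neq 0$, a nonzero class $[\omega^{*}]\in\ker V_{g_{0}}\subseteq E$, and a closed $\Z_{2}$-antisymmetric representative $\omega^{*}$ compactly supported in the interior region. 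Since $V_{g}([\omega^{*}])\in I_{g}$ for all $g$, the function $g\mapsto\lambda_{g}(V_{g}([\omega^{*}]))$ vanishes identically near $g_{0}$; differentiating along any $\Z_{2}$-invariant perturbation $g_{t}=g_{\mathrm{cyl}}+tT$ with $T$ supported on a neck near the interior region, the $\dot\lambda$-contribution dies because $V_{g_{0}}([\omega^{*}])=0$, and we are left with
$$
0=\sum_{l=1}^{p}\Re\!\Bigl(\overline{c_{l}(g_{0})}\,\tfrac{d}{dt}\big|_{0}\,v^{\mathrm{cyl}}_{10}(\omega^{*},\Sigma_{l})\Bigr)\qquad\text{for all such }T,
$$
where $v^{\mathrm{cyl}}$ solves $\Delta_{g_{t}}v^{\mathrm{cyl}}=-\d^{*}\omega^{*}$ with a right-hand side independent of $t$ (the supports of $T$ and $\omega^{*}$ are disjoint).

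Now the argument of \cref{lem:proof-assumptions-connected-case} applies. Feeding the above into the Poisson maps $G^{\mathrm{cyl}}_{1,l}$ of the cylindrical ends (\cref{eq:limit:def-G^cyl}) and the trace identity \cref{eq:variation:formula-he-traces}, we obtain
$$
0=\int_{\hat M}\Bigl(\Tr(TS)-\tfrac12\Tr(T)\Tr(S)\Bigr)\Vol^{g_{\mathrm{cyl}}}\qquad\text{for all such }T,
$$
with $S=\tfrac12\bigl(\d G\otimes\d v^{\mathrm{cyl}}+\d v^{\mathrm{cyl}}\otimes\d G\bigr)$ and $G:=\sum_{l}\Re\bigl(\overline{c_{l}(g_{0})}\,G^{\mathrm{cyl}}_{1,l}\bigr)$, a $\Z_{2}$-antisymmetric harmonic function. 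Choosing a component $\Sigma_{l_{0}}$ with $c_{l_{0}}(g_{0})\neq 0$, the cylindrical-end expansion of $G$ shows $G$ is non-constant, so $\d G\neq 0$ on a dense open set; \cref{lem:variation:lemma2.2-he} then forces $\d v^{\mathrm{cyl}}=0$ wherever $\d G\neq 0$ near that neck, hence $\d v^{\mathrm{cyl}}\equiv 0$ on an open set $U$ of the neck disjoint from $\operatorname{supp}\omega^{*}$. On $U$ we have $\omega^{*}=0$, so the harmonic $1$-form $\omega^{*}+\d v^{\mathrm{cyl}}$, which represents $[\omega^{*}]\in H^{1}_{-}(\hat M)$, vanishes on $U$; by Aronszajn's unique continuation theorem \cite{Aronszajn1957} it vanishes everywhere, so $[\omega^{*}]=0$, contradicting $[\omega^{*}]\neq 0$. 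Therefore $k=2p$, $D\not\equiv 0$, and for a generic interior metric $V_{g}$ is an isomorphism, producing the basis in the lemma.

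The delicate point --- not a consequence of quoting earlier results --- is the reduction to a single equation for $\omega^{*}$: one must pick the fixed representative $\omega^{*}$ and the annihilator $\lambda_{g}$ so that, after differentiating the identity, only $\tfrac{d}{dt}v^{\mathrm{cyl}}$ remains and the sum over the $\Sigma_{l}$ collapses into one pair $(G,v^{\mathrm{cyl}})$ to which \cref{lem:variation:lemma2.2-he} applies verbatim; contracting against $\lambda_{g_{0}}$ is exactly what does this. The rest (the trace identity, the trace lemma, non-vanishing of $\d G$, and the unique-continuation finish) is already available from \cref{lem:proof-assumptions-connected-case} and the preceding lemmas. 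Alternatively one may induct on the number of components, using the inductive hypothesis to put $V$ into block-triangular form and reduce $\det V$ to one $2\times 2$ block; the analytic content is the same.
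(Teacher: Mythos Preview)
Your proof is correct and reaches the same analytic core as the paper, but the reduction is genuinely different. The paper argues by induction on the size $k$ of the leading principal minor $V^k$: assuming $\det V^{k-1}\neq 0$ but $\det V^k=0$, it row-reduces $V^k$ to block form, uses the Schur complement to express $\det V^k$ as a single scalar $x-W\cdot U$ depending only on $\omega_k$ (the entries of $W$ being frozen constants), and then differentiates that scalar in the metric to feed into \cref{lem:variation:lemma2.2-he} and the unique-continuation finish. You bypass the induction and the matrix algebra with a duality argument: at a metric of maximal rank you pair a fixed $[\omega^*]\in\ker V_{g_0}$ with an annihilator $\lambda_g$ of $\operatorname{im}V_g$, so that differentiating $\lambda_g(V_g[\omega^*])\equiv 0$ collapses all $2p$ components into one pair $(G,v^{\mathrm{cyl}})$ with $G=\sum_l\Re(\overline{c_l}\,G^{\mathrm{cyl}}_{1,l})$. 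This is cleaner and coordinate-free; the trade-off is that you invoke real-analytic dependence of $v^{\mathrm{cyl}}$ on the metric to pass from ``one metric'' to ``generic'', an (admittedly standard) input the paper avoids because its inductive perturbation argument yields density directly. One small simplification: you do not actually need the family $\lambda_g$, only $\lambda_{g_0}$, since $V_{g_0}[\omega^*]=0$ already kills the $\dot\lambda$-term; equivalently, constancy of rank near $g_0$ forces $\dot V[\omega^*]\in I_{g_0}$.
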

\begin{proof}
	In \cref{lem:proof-assumptions-connected-case} we already considered the case where $\Sigma$ is connected. Now consider the case where $\Sigma$ is not connected. Let $\Sigma_1, \ldots, \Sigma_p$ be the path-connected components of $\Sigma$.
	Consider the map $V \colon E \to \R^{2p}$, given by
	$$
	V([\omega]) = \begin{pmatrix}
		\Re(v^{\mathrm{cyl}}_{10}(\omega, \Sigma_1))\\ \Im(v^{\mathrm{cyl}}_{10}(\omega, \Sigma_1)) \\
		\vdots \\
		\Re(v^{\mathrm{cyl}}_{10}(\omega, \Sigma_p))\\ \Im(v^{\mathrm{cyl}}_{10}(\omega, \Sigma_p)) 
	\end{pmatrix}.$$
	If $\{[\omega_1], \ldots [\omega_{2p}]\}$ is a basis of $E$, then $V$ can be written as
	\begin{equation}
		\label{eq:variation:matrix-V}
		V = \begin{pmatrix}
			\Re(v^{\mathrm{cyl}}_{10}(\omega_1, \Sigma_1)) 
			& \Re(v^{\mathrm{cyl}}_{10}(\omega_2, \Sigma_1)) 
			& \Re(v^{\mathrm{cyl}}_{10}(\omega_3, \Sigma_1)) 
			& \ldots \\
			\Im(v^{\mathrm{cyl}}_{10}(\omega_1, \Sigma_1)) 
			& \Im(v^{\mathrm{cyl}}_{10}(\omega_2, \Sigma_1))
			& \Im(v^{\mathrm{cyl}}_{10}(\omega_3, \Sigma_1))
			& \ldots \\
			\Re(v^{\mathrm{cyl}}_{10}(\omega_1, \Sigma_2)) 
			& \Re(v^{\mathrm{cyl}}_{10}(\omega_2, \Sigma_2)) 
			& \Re(v^{\mathrm{cyl}}_{10}(\omega_3, \Sigma_2)) 
			& \ldots \\
			\vdots & \vdots & \vdots & \ddots
		\end{pmatrix}.
	\end{equation}
	Again, if $\det (V) \not = 0$, then we can find a basis of $E$ that makes $V$ the identity matrix, proving the lemma. So assume that $\det V = 0$ and the lemma is false.
	
	Let $V^k$ be a square $k$ by $k$ matrix that is constructed by restricting $V$ to the first $k$ rows and columns. Consider the first instance of $k \in \N$ such that $\det(V^{k-1}) \not = 0$ while $\det V^k = 0$.
	We can write $V^k$ as a block matrix
	$$
	V^k = \begin{pmatrix}
		V^{k-1} & U \\
		W & x
	\end{pmatrix}
	$$ 
	and using some basis transformations, we can always assume that 
	$$
	V^k = \begin{pmatrix}
		\operatorname{Id} & 0 \\
		W & x
	\end{pmatrix}.
	$$ 
	By the assumption that $\det V^k = 0$, the value of $x$ has to vanish. Using the Schur complement, the determinant of $V^k$ can be written as
	$$
	\det(V^k) = \det(V^{k-1}) \cdot (x - W (V^{k-1})^{-1} U).
	$$
	So the variation of $\det(V^k)$ under the variation of the metric simplifies to
	$$
	\left.\frac{\del }{\del t}\right|_{t=0} \det(V^k) = \left.\frac{\del}{\del t}\right|_{t=0} x - W\cdot \left.\frac{\del }{\del t}\right|_{t=0} U.
	$$

	Now look at \cref{eq:variation:matrix-V}: The element $x$ and the row vector $U$ only depend on $\omega_k$, while $W$ only depends on $\omega_1, \ldots, \omega_{k-1}$. Like in \cref{lem:proof-assumptions-connected-case}, we can treat $W$ as a constant column vector and write $U$ and $x$ in terms of Poisson maps that act on $\d^* \omega_k$. The resulting Poisson map is not constant and so using \cref{lem:variation:lemma2.2-he} and repeating the proof of \cref{prop:variation-metric-v-cyl} we can reach a contradiction. This concludes that for a generic metric $V^k$ is invertible. 
	By induction, $V$ is invertible for a generic metric, which finishes the proof.
\end{proof}

In summary we can find a basis for $E$ that controls the $v^{\mathrm{cyl}}_{10}$ terms. At the same time, \cref{prop:variation-metric-v-cyl} assures us that for a generic metric $v^{\mathrm{cyl}}_{30} \not = 0$. Again, we need to ask whether we can control these things independently. In the next lemma we show that this is indeed the case for a generic metric. With this set, we finally prove the main theorem.

\begin{lemma}
	\label{lem:proof-assumptions-b-terms}
	Let $p$ be the number of connected components of $\Sigma$, let $E \subset H^1_-(\hat{M})$ be a $2p$-dimensional subspace and let $\sigma \in H^1_-(\hat M) / E$ be non-zero. For a generic metric, there is a closed $\Z_2$-antisymmetric 1-form $\omega$, compactly supported on the interior region, such that $\omega$ is a representative of $\sigma \in H^1_-(\hat M) / E$ and for every connected component $\Sigma_k$ of $\Sigma$,
	$$
	v^{\mathrm{cyl}}_{10}(\omega, \Sigma_k) = 0
	\quad \text{and} \quad
	v^{\mathrm{cyl}}_{30}(\omega, \Sigma_k) \not= 0.
	$$
\end{lemma}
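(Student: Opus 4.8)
\textit{Plan.} The idea is to obtain $\omega$ by a single linear–algebra step built on \cref{lem:proof-assumptions-non-connected-case}, and then to make the conditions $v^{\mathrm{cyl}}_{30}(\omega,\Sigma_k)\neq0$ generic by a perturbation argument in the spirit of \cref{prop:variation-metric-v-cyl}. Fix a $\Z_2$-antisymmetric representative $\omega_0$ of $\sigma$, compactly supported on the interior region; since $\sigma\neq0$ in $H^1_-(\hat M)/E$ we have $[\omega_0]\notin E$, so in particular $[\omega_0]\neq0$. By \cref{lem:proof-assumptions-non-connected-case}, for a generic metric there is a basis $\{[\omega_k^{\Re}],[\omega_k^{\Im}]\}_{k=1}^{p}$ of $E$ with $v^{\mathrm{cyl}}_{10}(\omega_k^{\Re},\Sigma_l)=\delta_{kl}$ and $v^{\mathrm{cyl}}_{10}(\omega_k^{\Im},\Sigma_l)=i\,\delta_{kl}$; equivalently the $\R$-linear map $L\colon E\to\C^{p}$, $e\mapsto(v^{\mathrm{cyl}}_{10}(e,\Sigma_l))_{l}$, is an isomorphism. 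Because $v^{\mathrm{cyl}}_{n0}(\,\cdot\,,\Sigma_l)$ is $\R$-linear in its $1$-form argument (the equation $\Delta v^{\mathrm{cyl}}=-\d^{*}\omega$ and the Fourier projection are both linear), the form
$$
\omega=\omega_0-\sum_{k=1}^{p}\Bigl(\Re\bigl(v^{\mathrm{cyl}}_{10}(\omega_0,\Sigma_k)\bigr)\,\omega_k^{\Re}+\Im\bigl(v^{\mathrm{cyl}}_{10}(\omega_0,\Sigma_k)\bigr)\,\omega_k^{\Im}\Bigr)
$$
is $\Z_2$-antisymmetric, closed, compactly supported on the interior region, represents $\sigma$ in $H^1_-(\hat M)/E$, satisfies $v^{\mathrm{cyl}}_{10}(\omega,\Sigma_l)=0$ for all $l$, and has $[\omega]\neq0$ since $[\omega]-[\omega_0]\in E$ while $[\omega_0]\notin E$. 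Equivalently $\omega=\omega(g)$ is the unique representative of $\sigma$ in the affine space $[\omega_0]+E$ annihilated by all the functionals $v^{\mathrm{cyl}}_{10}(\,\cdot\,,\Sigma_l)$; fixing a $g$-independent basis of $E$, this makes $g\mapsto\omega(g)$ smooth on the open set of metrics where $L$ is invertible.

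It remains to show that, for generic $g$ among those with $L$ invertible, $v^{\mathrm{cyl}}_{30}(\omega(g),\Sigma_k;g)\neq0$ for every $k$. Fix $k$ and suppose the contrary on a nonempty open set of metrics, i.e. $v^{\mathrm{cyl}}_{30}(\omega(g),\Sigma_k;g)=0$ (real and imaginary parts) for all $g$ there; it suffices to contradict this. Pick such a $g$ and perturb $g_s=g+sT$ with $T$ supported in a small open set $U$ on the neck at $\Sigma_k$ near the interior region, chosen disjoint from the supports of $\omega_0$ and of all the basis representatives (so He's identity applies to every form involved). Differentiating $0=v^{\mathrm{cyl}}_{30}(\omega(g_s),\Sigma_k;g_s)$ at $s=0$ and using linearity in the $1$-form argument,
$$
0=v^{\mathrm{cyl}}_{30}\bigl(\dot\omega,\Sigma_k;g\bigr)+\frac{\del}{\del s}\Big|_{0}v^{\mathrm{cyl}}_{30}\bigl(\omega(g),\Sigma_k;g_s\bigr),
$$
where $\dot\omega\in E$ is determined by differentiating the relations $v^{\mathrm{cyl}}_{10}(\omega(g_s),\Sigma_l;g_s)=0$ and inverting $L$. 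Every $s$-derivative here is computed by He's trace identity \cref{eq:variation:formula-he-traces}, with Poisson kernel $G^{\mathrm{cyl}}_{3,\Sigma_k}$ for the $v_{30}$-term and $G^{\mathrm{cyl}}_{1,\Sigma_l}$ for the $v_{10}$-terms, and with $\d v^{\mathrm{cyl}}$ equal on $U$ to the harmonic representative $h$ of $[\omega(g)]$ (since $\omega(g)$ vanishes on $U$). Collecting terms, the right-hand side equals $\int_{U}\bigl(\Tr(TS_G)-\tfrac12\Tr(T)\Tr(S_G)\bigr)\Vol^{g}$ with $S_G=\tfrac12(\d G\otimes h+h\otimes\d G)$, where $G$ is a single harmonic function — an explicit real-linear combination of $\Re(G^{\mathrm{cyl}}_{3,\Sigma_k})$, $\Im(G^{\mathrm{cyl}}_{3,\Sigma_k})$ and of the $G^{\mathrm{cyl}}_{1,\Sigma_l}$.

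The key observation is that this $G$ is not locally constant near $U$: by \cref{eq:limit:def-G^cyl} its $e^{\pm3i\phi}$ Fourier component on the neck at $\Sigma_k$ comes only from $G^{\mathrm{cyl}}_{3,\Sigma_k}$, with the nonzero coefficient $\tfrac1{24\pi^{2}}$, and this cannot be cancelled by the $e^{\pm i\phi}$ modes supplied by the $G^{\mathrm{cyl}}_{1,\Sigma_l}$. Hence $\d G\neq0$ on a nonempty open subset of $U$. If the displayed integral vanished for all $T$ supported in $U$, then \cref{lem:variation:lemma2.2-he} would force $h=\d v^{\mathrm{cyl}}=0$ there, and the unique continuation theorem of \mycite{Aronszajn1957} would give $h\equiv0$, contradicting $[\omega(g)]\neq0$. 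So the right-hand side is a nonzero functional of $T$, a contradiction. Therefore the bad set for each $k$ is nowhere dense, its finite union over $k$ is nowhere dense, and intersecting the complement with the generic set from \cref{lem:proof-assumptions-non-connected-case} gives a generic set of metrics for which $\omega=\omega(g)$ satisfies $v^{\mathrm{cyl}}_{10}(\omega,\Sigma_k)=0$ and $v^{\mathrm{cyl}}_{30}(\omega,\Sigma_k)\neq0$ for all $k$.

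I expect the main obstacle to be the chain-rule bookkeeping in the middle step: because $\omega$ is itself metric-dependent, the variation of $v^{\mathrm{cyl}}_{30}$ mixes the direct metric variation with the variation of $\omega$ forced by the constraints $v^{\mathrm{cyl}}_{10}=0$, and one must verify that all contributions repackage into one He-type integral against a harmonic function that is still non-constant — and it is precisely the Fourier orthogonality of the $n=1$ and $n=3$ modes on the neck that makes this repackaging succeed.
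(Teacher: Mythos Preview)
Your proof is correct, but it is organized differently from the paper's. The paper avoids the chain-rule bookkeeping by enlarging the matrix from \cref{lem:proof-assumptions-non-connected-case}: it sets $F=E\oplus\langle\tilde\omega\rangle$ and considers the $(2p{+}1)\times(2p{+}1)$ map $\tilde V\colon F\to\R^{2p+1}$ obtained by appending $\Re v^{\mathrm{cyl}}_{30}(\,\cdot\,,\Sigma_k)$ as an extra row. Showing $\tilde V$ is generically invertible is then an identical Schur-complement computation to the one already carried out in \cref{lem:proof-assumptions-non-connected-case}, and the desired $\omega$ is simply $\tilde V^{-1}(0,\ldots,0,1)^{T}$. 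Your explicit chain-rule calculation and the paper's Schur-complement step are in fact the same computation in disguise: the Schur complement of the top-left $2p\times 2p$ block of $\tilde V$ is precisely $\Re v^{\mathrm{cyl}}_{30}(\omega(g),\Sigma_k)$, and its metric-variation is exactly the combined He integral you assemble. The paper's packaging is cleaner because it reuses the induction scheme of \cref{lem:proof-assumptions-non-connected-case} verbatim (the ``combined Poisson map'' $G$ arises automatically from the last-row/last-column expansion), whereas your route requires tracking that $\dot\omega\in E$ contributes only through fixed scalars $v^{\mathrm{cyl}}_{30}(\omega_j,\Sigma_k)$ multiplying He integrals with the \emph{same} harmonic representative $h$, so that everything collapses to a single $S_G$. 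Your Fourier-orthogonality argument for the non-constancy of $G$ (the growing $e^{-\frac34 r'}\cos 3\phi$ mode on the $\Sigma_k$-neck cannot be cancelled by the $n=\pm1$ singular parts or by the bounded $H^{\mathrm{cyl}}$ pieces, which carry only decaying modes) is correct and slightly more explicit than what the paper writes down.
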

\begin{proof}
	Suppose that this lemma is false. Then there exists a connected component $\Sigma_k$, such that for any generic metric and any $[\tilde \omega] \in H^1_-(\hat{M}) / E$,
	$$
	v^{\mathrm{cyl}}_{30}(\tilde \omega, \Sigma_k) = 0.
	$$
	Let $F = E \oplus \langle \tilde \omega \rangle $ and consider the map $\tilde V \colon F \to \R^{2p + 1}$, 
	$$
	\tilde V([\omega]) = \begin{pmatrix}
		\Re(v^{\mathrm{cyl}}_{10}(\omega, \Sigma_1))\\ \Im(v^{\mathrm{cyl}}_{10}(\omega, \Sigma_1)) \\
		\Re(v^{\mathrm{cyl}}_{10}(\omega, \Sigma_2))\\
		\vdots \\
		\Im(v^{\mathrm{cyl}}_{10}(\omega, \Sigma_p)) \\
		\Re(v^{\mathrm{cyl}}_{30}(\omega, \Sigma_k))
	\end{pmatrix}.$$
	Repeating \cref{lem:proof-assumptions-non-connected-case}, one can show $\tilde V$ is invertible for a generic metric. So there is an non-zero $\omega \in F$ such that $\tilde V(\omega) = (0, \ldots, 0, 1)^{T}$.

	We claim $\omega \not \in E$. Indeed, let $V \colon E \to \R^{2p}$ from \cref{lem:proof-assumptions-non-connected-case} and assume $\omega \in E$. By construction $\tilde V|_{E} = V$ and so $V(\omega) = 0$. According to \cref{lem:proof-assumptions-non-connected-case}, $V$ is an isomorphism for a generic metric and hence $\omega = 0$. This contradicts the fact that $\omega \not = 0$ and hence $\omega \not \in E$.
	
	By rescaling $\omega$ such that $\omega - \tilde \omega \in E$, we conclude the proof.
\end{proof}
\begin{proof}[Proof of \cref{thm:main-theorem}]
	The existence of the $\Z_2$-harmonic 1-forms follows from \cref{prop:application-nash-moser-with-assumptions} as long as we can satisfy Assumptions \ref{asm:1}, \ref{asm:2} and \ref{asm:3}. We claim that this follows from \cref{lem:proof-assumptions-non-connected-case} and \cref{lem:proof-assumptions-connected-case}.
	
	Let $p$ be the number of connected components of $\Sigma$ and let $E \subset H^1_-(\hat{M})$ be a $2p$-dimensional subspace and let $\sigma \in H^1_-(\hat M) / E$ be non-zero. Assume without loss of generality that $g_{\mathrm{cyl}}$ is a generic metric on $\hat{M}$. By Lemma \cref{lem:proof-assumptions-non-connected-case}, there is a basis $\{ [\omega_1^\R], [\omega_1^\Im], \ldots, [\omega_p^\Re], [\omega_p^\Im]\}$ of $E$ such that 
	$$
	v^{\mathrm{cyl}}_{10}(\omega_k^\Re, \Sigma_l) = \begin{cases}
		1 & k = l, \\
		0 & k \not= l,
	\end{cases}
	\quad \text{and} \quad
	v^{\mathrm{cyl}}_{10}(\omega_k^\Im, \Sigma_l) = \begin{cases}
		i & k = l, \\
		0 & k \not= l.
	\end{cases}
	$$
	Recall that the existence of this basis followed from the non-vanishing of a certain determinant in \cref{lem:proof-assumptions-non-connected-case}. Because invertibility is an open condition and $v_{n0}$ converges to $v^{\mathrm{cyl}}_{n0}$ with rate $e^{-1/4 s}$, there is a smooth family of bases $\{ [\omega_1^\Re(s)], [\omega_1^\Im(s)], \ldots, [\omega_p^\R(s)], [\omega_p^\Im(s)]\}$ such that for $s$ sufficiently large
	$$
	v_{10}(\omega_k^\Re(s), \Sigma_l) = \begin{cases}
		1 & k = l, \\
		0 & k \not= l,
	\end{cases}
	\quad \text{and} \quad
	v_{10}(\omega_k^\Im(s), \Sigma_l) = \begin{cases}
		i & k = l, \\
		0 & k \not= l.
	\end{cases}
	$$
	Moreover, $\omega_k^\Re(s)$ and $\omega_k^\Im(s)$ converge to $\omega_k^\Re$ and $\omega_k^\Im$ respectively and so for this family of bases, the co-differential is uniformly bounded.
	
	By \cref{lem:proof-assumptions-b-terms}, there is a representative $\omega$ of $\sigma \in H^1_-(\hat{M}) /E$ such that 
	$$
	v^{\mathrm{cyl}}_{10}(\omega, \Sigma_k) = 0
	\quad \text{and} \quad
	v^{\mathrm{cyl}}_{30}(\omega, \Sigma_k) \not= 0
	$$
	for every connected component $\Sigma_k$ of $\Sigma$. Because $v_{n0}$ converges to $v^{\mathrm{cyl}}_{n0}$ with rate $e^{-1/4 s}$,
	$$
	v_{10}(\omega, \Sigma_k) = \O(e^{-1/4 s})
	\quad \text{and} \quad
	v_{30}(\omega, \Sigma_k) = \O(1) \not = 0.
	$$
	Hence for $s$ sufficiently large, we can modify $\omega$ using the basis $\{ [\omega_1^\Re(s)],\allowbreak [\omega_1^\Im(s)],\allowbreak\ldots, [\omega_p^\R(s)],\allowbreak [\omega_p^\Im(s)]\}$ 
	such that
	$$
	v_{10}(\omega, \Sigma_k) = 0
	\quad \text{and} \quad
	v_{30}(\omega, \Sigma_k) = \O(1) \not = 0.
	$$
	We claim that this modified $\omega$ is the $\omega_s$ needed to satisfy Assumption \ref{asm:1}, \ref{asm:2} and \ref{asm:3}. Indeed,
	since $\omega_s$ converges as $s \to \infty$, $\d^* \omega_s$ is uniformly bounded and Assumption \ref{asm:1} is satisfied. Assumption \ref{asm:3} is satisfied by \cref{cor:limit:consequence-A3}. Finally, comparing the different rescalings,
	$$
	v_{10}(\omega_s, \Sigma_k) = e^{\frac{1}{2}s} u_{10}(\omega_s, \Sigma_k) = 0,
	$$
	showing that Assumption \ref{asm:2} is also satisfied.
\end{proof}

\bibliographystyle{elsarticle-num} 
\bibliography{my-bibliography}
\end{document}